\documentclass[11pt,leqno]{amsart}

\usepackage[english]{babel}
\usepackage[letterpaper,top=2cm,bottom=2cm,left=1.5cm,right=1.5cm]{geometry}
\usepackage[T1]{fontenc}
\usepackage{mathrsfs}
\usepackage{booktabs} 
\usepackage{here}
\usepackage[utf8]{inputenc}
\usepackage{amsmath,amssymb,amsthm,amsfonts,dsfont,graphicx,enumitem}
\usepackage[colorlinks=true,linkcolor=blue,citecolor=blue,urlcolor=blue]{hyperref} 
\usepackage{mathtools}
\usepackage{tikz}
\usetikzlibrary{calc,intersections,decorations.pathreplacing}
\usepackage{tikz-3dplot}
\usepackage{pgfplots}
\usepackage{cite}
\usepackage[colorlinks=true,linkcolor=blue,citecolor=blue,urlcolor=blue]{hyperref}

\pgfplotsset{compat=1.16}

\usepackage{algpseudocode}
\usepackage{caption}
\usepackage{subcaption}

\DeclareCaptionType{algorithm}[Algorithm][List of Algorithms]
\usepackage{framed}

\theoremstyle{plain}
\newtheorem{theorem}{Theorem}[section]
\newtheorem{lemma}[theorem]{Lemma}

\theoremstyle{definition}

\newtheorem{remark}[theorem]{Remark}
\newtheorem{conj}[theorem]{Conjecture}

\newtheoremstyle{nonumwithname}
  {3pt}{3pt}
  {\normalfont}
  {}
  {\bfseries}
  {.}
  { }
  {\thmname{#1}\ \textsc{#3}}
\theoremstyle{nonumwithname}

\newcommand{\CR}{{\mbox{\tiny CR}}}
\newcommand{\CG}{{\mbox{\tiny CG}}}

\newcommand{\ip}[2]{\bigl(#1,\,#2\bigr)}

\newcommand{\R}{\mathbb{R}}

\begin{document}

\title[]
{
Sharp Dirichlet Eigenvalue Inequalities on Triangles
}

\author[Endo]{Ryoki Endo}
	\address{Faculty of Science\\
    Niigata University\\
    Niigata, Japan. JSPS Research Fellow (PD).
		8050, Ikarashi 2-no-cho\\
		Nishi-ku, Niigata 950-2181, Japan}
	\email{endo@m.sc.niigata-u.ac.jp}

\author[Liu]{Xuefeng Liu}
	\address{School of Arts and Sciences\\
		Tokyo Woman's Christian University\\
		2-6-1 Zenpukuji\\
		Suginami-ku, Tokyo 167-8585, Japan}
	\email{xfliu@lab.twcu.ac.jp}

    \author[Mariano]{Phanuel Mariano}
	\address{Department of Mathematics\\
		Union College\\
		Schenectady, NY 12308,  U.S.A.}
	\email{marianop@union.edu}

\maketitle

\begin{abstract}
	We prove sharp Dirichlet eigenvalue inequalities for planar triangles. We settle a conjecture of Laugesen and Siudeja by showing that the equilateral triangle uniquely minimizes a scale-invariant functional of the first Dirichlet eigenvalue, area, and perimeter. Consequences include an optimal two-term lower bound for the first Dirichlet eigenvalue in terms of area and perimeter. We also prove a Cheeger-type inequality with an explicit best constant considered by Parini. To prove these conjectures we propose a new method for proving Dirichlet eigenvalue inequalities on triangles. Our method is based on a new computable lower bound for second-order directional shape derivatives under vertex perturbations. It also uses validated finite-element error estimates and recently developed analytic estimates for eigenvalues of nearly degenerate triangles. The method is not specific to the functionals considered in
  this paper and  it can be used to prove various other eigenvalue inequalities on triangles.
\end{abstract}

\tableofcontents

\section{Introduction}

Let $\triangle\subset\mathbb{R}^2$ be a triangle, and consider the Dirichlet eigenvalue problem
\[
-\Delta u=\lambda u \quad \text{in }\triangle,
\qquad
u=0 \quad \text{on }\partial\triangle.
\]
Its spectrum consists of a discrete sequence
\[
0<\lambda_1(\triangle)<\lambda_2(\triangle)\le \cdots \to +\infty,
\]
where $\lambda_1(\triangle)$ denotes the first Dirichlet eigenvalue.

The first Dirichlet eigenvalue of the Laplacian is a central object in spectral shape optimization.
Among all planar domains of prescribed area, the disk uniquely minimizes $\lambda_1$ by the Faber--Krahn inequality.
A natural polygonal analogue, going back to Pólya--Szegő \cite{Polya-Szego-1951} and Pólya \cite{Polya-1948b}, asks whether, among all $n$-gons of fixed area, the regular $n$-gon minimizes the first Dirichlet eigenvalue.
This problem is classical and deceptively simple: the cases $n=3,4$ are known, whereas for general $n\ge 5$ the full polygonal problem remains highly nontrivial; see, for example, \cite{Laugesen-Siudeja-BookChapter-2017,Bogosel-Bucur-2024}.

Within the class of triangles, a number of sharp spectral inequalities are known.
P\'olya--Szeg\"o's classical result implies that the equilateral triangle minimizes $\lambda_1$ among triangles of fixed area
\begin{equation}
\lambda_{1}\left(\triangle\right)\geq\frac{4\pi^{2}}{\sqrt{3}\left|\triangle\right|}.\label{eq:Triang-Faber-Khran}
\end{equation}
Subsequent works established further sharp bounds involving the perimeter, the diameter, and eigenvalue sums; see \cite{Freitas-2007,Freitas-Siudeja-2010,Laugesen-Siudeja-2010,Laugesen-Siudeja-BookChapter-2017,Siudeja-2007,siudeja2010isoperimetric,brooks1987first,laugesen2010minimizing,laugesen2009maximizing}. Other isoperimetric-type inequalities on triangles are known such as the sharp fundamental gap \cite{Lu-Rowlett-2013}, a sharp Ashbaugh-Benguria-Payne-Pólya-Weinberger inequality \cite{siudeja2010isoperimetric,Arbon-etall-2022}, sharp inequalities for mixed eigenvalues \cite{Siudeja-mixed-2016,Chen-Hongbin-Changfeng-2026} and sharp lower bound between the principal frequency and the torsional rigidity \cite{Banuelos-Mariano-2024}. In general, proving spectral properties for triangles can be quite difficult, as illustrated by problems such as the Hot Spots problem \cite{Siudeja-HotSpots-2015,Judge-Sugata-2020} that took several years to solve. There are still many interesting open problems in the spectral theory of triangles for which new tools are needed.

In \cite{Makai-1962}, Makai proved that for every convex domain  $\Omega \subset \mathbb{R}^2$,
\begin{equation}
\lambda_{1}(\Omega)\geq \frac{\pi^{2}}{16}\frac{|\partial\Omega|^{2}}{|\Omega|^{2}}.
\label{eq:Makai}
\end{equation}
In \cite{Siudeja-2007}, Siudeja showed that this inequality is asymptotically attained by thinning triangles and gave a sharp quantative upper bound.
Motivated by the numerical evidence of Antunes and Freitas \cite{antunes2006new}, Laugesen and Siudeja formulated a conjectural refinement for the first Dirichlet eigenvalue of triangles that combines area and perimeter in a sharp scale-invariant form \cite[Conjecture~6.7]{Laugesen-Siudeja-BookChapter-2017}.

\begin{conj}[Conjecture 6.7 of \cite{Laugesen-Siudeja-BookChapter-2017}]\label{Conj:1}
    The functional
    \begin{equation}\label{eq:Conjecture}
        \mathcal{F}(\triangle)
        :=
        \lambda_1(\triangle)|\triangle|
        -\frac{\pi^2}{16}\frac{|\partial\triangle|^2}{|\triangle|}
    \end{equation}
    is minimized uniquely by the equilateral triangle, and the minimum value is
    $7\sqrt{3}\pi^2/12$.
\end{conj}

Conjecture~\ref{Conj:1} immediately implies the following sharp two-term
lower bound, which is the lower-bound part of the two-sided estimate
conjectured by Siudeja \cite[Conjecture~1.3]{Siudeja-2007}.
\begin{conj}[Lower-bound part of Conjecture 1.3 in \cite{Siudeja-2007}]\label{Conj:2}
    For any triangle $\triangle$,
    \begin{equation}
        \lambda_{1}(\triangle)
        \ge
        \frac{\pi^{2}|\partial\triangle|^{2}}{16|\triangle|^{2}}
        +\frac{7\sqrt{3}\pi^{2}}{12|\triangle|},
    \end{equation}
    where equality holds if and only if $\triangle$ is equilateral.
\end{conj}

These conjectures are sharp quantitative versions of Makai's inequality \eqref{eq:Makai} for triangles. The paper of Antunes and Freitas \cite{antunes2006new}, originally conjectured that there exists some constant $\theta_1$ such that for all $\theta \leq \theta_1$, one has 
\begin{equation}
\lambda_{1}\left(\triangle\right)\geq\frac{4\pi^{2}}{\sqrt{3}\left|\triangle\right|}+\theta\frac{\left|\partial\triangle\right|^{2}-12\sqrt{3}\left|\triangle\right|}{\left|\triangle\right|^{2}}.\label{eq:Antunes-Freitas}
\end{equation}
Note that this is a quantitative version of P\'olya--Szeg\"o's polygonal Faber–Krahn inequality of \eqref{eq:Triang-Faber-Khran}. Inequality \eqref{eq:Antunes-Freitas} is equivalent to Conjectures \ref{Conj:1} and \ref{Conj:2} when $\theta_1=\frac{\pi^2}{16}$. These inequalities were also studied by Indrei in \cite[Corollary I.2.]{Indrei-2024} where \eqref{eq:Antunes-Freitas} was proved for  some computable $\theta_1$ that satisfies $\theta_1\leq \frac{\pi^2}{16}$. Indrei also showed that this inequality implies a quantitative version of triangular Faber-Krahn inequality with a sharp exponent in terms of the Fraenkel asymmetry \cite[Corollary I.4.]{Indrei-2024}. The sharp quantitative form of the Faber-Krahn inequality over general open sets was solved in \cite{brasco2015faber}. Similar quantitative versions were also studied recently by \cite{Amato-optimal-2025} for general convex domains.

Another inequality we will study is the well-known result by J. Cheeger \cite{Cheeger-1969}  that states that every bounded domain
$\Omega\subset\mathbb{R}^{n}$ satisfies 
\begin{equation}
\frac{\lambda_{1}\left(\Omega\right)}{h\left(\Omega\right)^{2}}\geq\frac{1}{4},\label{eq:Cheeger}
\end{equation}
where the Cheeger constant is defined by 
\begin{equation}
h\left(\Omega\right)=\inf\left\{ \frac{\left|\partial E\right|}{\left|E\right|}:E\subset\Omega\right\} .\label{eq:defn:Cheeger}
\end{equation}
Recall that, for a triangle $\triangle$, the Cheeger constant is given by
\[
h(\triangle)
=
\frac{|\partial\triangle|+\sqrt{4\pi|\triangle|}}{2|\triangle|}.
\]
For this fact, see \cite{kawohl2006characterization,brooks1987first}.
In \cite[Sec 6]{Parini-2017} it was conjectured that, among all convex $n$-gons
with a fixed number of edges, the Cheeger functional in \eqref{eq:Cheeger}
is minimized by the regular polygon for $n=3,4$. For triangles, this
conjecture can be written in the following way:

\begin{conj}[Cheeger-type inequality \cite{Parini-2017}]\label{Conj:3}
    For any triangle $\triangle$,
    \[
    \frac{\lambda_{1}(\triangle)}{h(\triangle)^{2}}
    \ge
    \frac{4\pi^{2}}{\left(3+\sqrt{\pi\sqrt{3}}\right)^{2}}\approx1.3885,
    \]
    where equality holds if and only if $\triangle$ is equilateral.
\end{conj}
This conjecture is motivated by the original paper by Parini
\cite[Sec 6]{Parini-2017} where he proves the existence of a minimizer for planar convex sets and improves the Cheeger inequality  \eqref{eq:Cheeger} to $$\frac{\lambda_{1}\left(\Omega\right)}{h\left(\Omega\right)^{2}}\geq\frac{\pi^{2}}{16}\approx 0.616,$$
for planar convex sets $\Omega$. In \cite[Prop. 3.1]{Ftouhi2021}, this lower bound is improved to $$\frac{\lambda_{1}(\triangle)}{h(\triangle)^{2}}\ge 0.902...$$ in the case of triangles. Parini conjectures that the square is the minimizer among planar convex sets, that is \begin{equation}
\frac{\lambda_{1}\left(\Omega\right)}{h\left(\Omega\right)^{2}}\geq\frac{\lambda_{1}\left(\left(0,1\right)^{2}\right)}{h\left(\left(0,1\right)^{2}\right)^{2}}=\frac{2\pi^{2}}{\left(2+\sqrt{\pi}\right)^{2}}\approx1.387.\label{eq:Cheeger-conj}
\end{equation}
We note that if \eqref{eq:Cheeger-conj} is true, then among $n$-sided polygons, the regular $n$-gon cannot be the minimizer for this functional when $n\geq 5$. In this case we would expect that the degenerating square would be the minimizer for all $n$-sided n-gons when $n\geq 5$.  

The Cheeger inequality was also studied in higher dimensions in the works of \cite{Ftouhi2021,Pratelli-Saracco-2025}. We point to the works of \cite{Briani-Buttazzo-Prinari-2023,Brasco-2022} for other generalizations of these results to the $p$-Laplacian and other functionals. Recently in \cite{bucur2025optimal}, the existence of a minimizer for the Cheeger functional is considered among larger classes of sets other than the convex ones. We also note that Brooks and Waksman in \cite{brooks1987first} originally considered the non-sharp Cheeger inequality on triangles. 

Conjecture~\ref{Conj:3} is stronger than the triangular case of P\'olya 's polygonal Faber-Krahn inequality.
Indeed, for triangles the Cheeger constant admits an explicit expression in terms of the area and the perimeter, and hence, under a fixed area constraint, minimizing $h(\triangle)$ is equivalent to minimizing $|\partial\triangle|$.
Therefore, the equilateral triangle is also the unique minimizer of the Cheeger constant among triangles of prescribed area.
Consequently, if Conjecture~\ref{Conj:3} holds, then the equilateral triangle must also minimize $\lambda_1$ among triangles of fixed area.

\medskip

Recently, there has been an increasing
interest in proving inequalities using computer assisted proofs through rigorous verified computing. As computing power grows and verified computation continues to advance, we can expect many more results to be established through computer-assisted proofs. We point to the recent works of 
\cite{Endo-Liu-2025-JDE-degenerate,endo2026-second-non-equilateral} where the simplicity of the second Dirichlet eigenvalue for triangles was established. We also point to the works of 
\cite{endo2025stable,endo2023shape,gomez2021any} for other results where verified computation is used in the spectral geometry of triangles. Verified computation was also used recently in verifying the local minimality of the polygonal Faber–Krahn inequality for $n=5,6$ conjectured by P\'olya \cite{Bogosel-Bucur-2024,bogosel-bucur-localmin-2024}. Elements of computer assisted proofs were used in the proof of Polya's conjecture on the Weyl bounds for 
the eigenvalue counting functions of the Dirichlet and Neumann Laplacian for the ball and annuli \cite{filonov2023polya,filonov2026polya}. 
We point the reader to \cite{nigam2025intersection} for a comprehensive treatment for results proven using verified computational techniques. 

\medskip

In this paper, we prove Conjectures~\ref{Conj:1}, \ref{Conj:2}, and~\ref{Conj:3} by a computer-assisted proof based on rigorous verified computation.
Our approach combines explicit shape derivative formulas for vertex perturbations of triangles, validated finite element estimates, and analytic bounds for nearly degenerate triangles.
The central analytic ingredient is a computable lower bound for second-order directional shape derivatives of Dirichlet eigenvalues.
Starting from the first- and second-order shape derivative formulas for nonsmooth domains due to Laurain \cite{Laurain-2020}, we derive a spectral representation of the second-order derivative and convert it into a rigorous computable lower bound by means of a truncation argument, verified eigenvalue bounds, and eigenspace error estimates.
To compute the concrete error bounds, we use the fully computable eigenspace estimates of Liu--Vejchodsk\'y \cite{liu2022fully}.

\medskip

The main result of this paper is the following.

\begin{theorem}
\label{thm:main-problem}
For every triangle $\triangle$, the shape functionals
\begin{align*}
J_1(\triangle)
&:= \lambda_1(\triangle)\,|\triangle|
   - \frac{\pi^{2}}{16}\,\frac{|\partial\triangle|^{2}}{|\triangle|}
   - \frac{7\sqrt{3}\,\pi^{2}}{12},\\
J_2(\triangle)
&:= \lambda_1(\triangle)\,|\triangle|
   - \frac{4\pi^2}{\left(3+\sqrt{\pi\sqrt{3}}\right)^2}
     \cdot
     \frac{\left(|\partial\triangle|+\sqrt{4\pi|\triangle|}\right)^2}{4|\triangle|}
\end{align*}
satisfy
\[
J_1(\triangle)\ge0,
\qquad
J_2(\triangle)\ge0.
\]
In each case, equality holds if and only if $\triangle$ is equilateral.
\end{theorem}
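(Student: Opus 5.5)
Both functionals are scale invariant, so we normalize. We fix two vertices at $(0,0)$ and $(1,0)$ and put the third at $(x,y)$. By symmetry we restrict to the fundamental domain
\[
D=\{(x,y): 0\le x\le 1/2,\ y>0,\ (x-1)^2+y^2\ge 1\},
\]
i.e.\ the longest side is the base. The equilateral triangle is the corner $E=(1/2,\sqrt3/2)$ of $D$. We then split $D$ into three regions: a small neighborhood $U$ of $E$, a nearly degenerate region $\{y<\varepsilon\}$, and the compact remainder $K$. Both $J_1$ and $J_2$ have the form $\lambda_1|\triangle|-G(|\triangle|,|\partial\triangle|)$ with $G$ explicit and elementary, so only $\lambda_1$ needs verified treatment. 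We check at the outset that $J_i(E)=0$: this follows from $\lambda_1(E)=16\pi^2/(3s^2)$ for the equilateral triangle of side $s$, since $\lambda_1|\triangle|=4\pi^2/\sqrt3$ and $|\partial|^2/|\triangle|=12\sqrt3$ give $4\pi^2/\sqrt3-\tfrac34\sqrt3\pi^2=7\sqrt3\pi^2/12$, and $J_2$ is similar.

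On the compact part $K$ we use a branch-and-bound argument. We cover $K$ by small boxes of apex positions. On each box we obtain a rigorous lower bound for $\lambda_1$ over all triangles in the box, using validated Crouzeix--Raviart lower bounds (Liu's guaranteed lower eigenvalue bounds) on a reference triangle together with monotonicity or affine-distortion (Rayleigh quotient comparison) estimates to pass from a box corner to the whole box. We get rigorous upper bounds for the elementary term $G$ by interval arithmetic. A box is accepted if the resulting lower bound for $J_i$ is positive, and boxes are refined adaptively. Away from $E$ and from degeneracy the margin $J_i$ is bounded below by a positive constant, so finitely many boxes suffice.

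Near degeneracy ($y\to0$) we invoke the analytic asymptotic estimates for nearly degenerate triangles referred to in the abstract. These are two-sided bounds of the type $\lambda_1|\triangle|\ge \frac{\pi^2}{16}\frac{|\partial\triangle|^2}{|\triangle|}+c\,\bigl(\cdots\bigr)$ with explicit remainder, in the spirit of Siudeja's sharpness analysis of Makai's inequality. For $J_1$ the leading terms cancel exactly, so we need the next-order term. We expect it to grow like $y^{-2/3}$ relative to area, which is positive and exceeds $7\sqrt3\pi^2/12$ once $y<\varepsilon$ for an explicit $\varepsilon$. For $J_2$ the constant $4\pi^2/(3+\sqrt{\pi\sqrt3})^2\cdot\tfrac14\approx0.347$ exceeds $\pi^2/16\approx0.617$? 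No: $0.347<0.617$, so the Makai term alone gives $J_2\to+\infty$, and this case is easy.

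The main obstacle is the neighborhood $U$ of $E$. There $J_i(E)=0$, so interval enclosures alone can never certify positivity. Because $E$ is a corner of $D$, it suffices to use a Taylor argument along directions $v$ pointing into $D$. The first shape derivative of $J_i$ at $E$ vanishes along the symmetric directions, and we compute it exactly: $\lambda_1$ and its eigenfunction are explicit on the equilateral triangle, and Laurain's boundary formula gives the derivative. The second-order derivative must be shown positive definite with margin, uniformly on $U$. We therefore plan to write $J_i(E+tv)=J_i(E)+tJ_i'(E)[v]+\tfrac{t^2}{2}J_i''(\xi)[v,v]$ and bound $J_i''$ from below on all of $U$. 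We use the spectral representation $\lambda''=\langle\cdot\rangle-2\sum_{k\ge2}|\langle\cdot,u_k\rangle|^2/(\lambda_k-\lambda_1)$, truncate the sum after finitely many modes, and control the tail by $\|\cdot\|^2/(\lambda_{N+1}-\lambda_1)$ with $\lambda_{N+1}$ bounded below by verified computation. The finite part is computed with FEM eigenfunctions, and the Liu--Vejchodsk\'y eigenspace error estimates make these terms rigorous. The delicate points are making this bound valid for perturbed triangles $\xi\in U$ rather than only at $E$, and choosing $U$ large enough to overlap the region certified by branch-and-bound.
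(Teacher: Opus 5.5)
Your plan has the same three-region architecture as the paper: second-order shape derivatives with FEM eigenfunctions and Liu--Vejchodsk\'y bounds near the equilateral triangle, cellwise verified lower bounds for $\lambda_1$ with domain monotonicity in the middle, and analytic bounds as $y\to0$. One slip first: for the base to be the longest side with $0\le x\le\tfrac12$ you need $(x-1)^2+y^2\le1$; with $\ge$ your $D$ is unbounded.

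\textbf{Near $E$.} Here the routes differ. The paper proves $\partial_x^2J_k>0$ on the whole strip $\Omega_{\mathrm{up}}$ and uses the reflection $x\mapsto1-x$ to force a minimiser onto the axis. It then needs $\partial_y^2J_k>0$ only on the segment $x=\tfrac12$, so only two fixed coordinate directions are involved. Your ray-wise Taylor argument avoids the symmetry step, but it needs the radial second derivative to be positive at every point of $U$. That means directional bounds over the whole $60^\circ$ cone of directions. It also needs $\nabla J_k(E)=0$ in every direction, not only the symmetric ones; this is true, and the paper checks it with the explicit eigenfunction.

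\textbf{$J_2$ near degeneracy.} Your use of Makai is correct and simpler than the paper's Freitas--Siudeja bound. It gives $J_2>0$ whenever $4\sqrt{2\pi y}<(\sqrt{\pi\sqrt3}-1)|\partial\triangle|$, hence for $y<0.07$ since $|\partial\triangle|>2$. This covers $\varepsilon_{\mathrm{down}}=0.04$.

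\textbf{Sign of the spectral sum.} Your minus sign is the correct one, and the paper's printed argument differs from you here. Its spectral lemma carries $+2\sum_{k\ne i}$, infers $\ddot\lambda^p_i\ge\ddot\lambda^p_{i,N}$, and then runs with $N_{\mathrm{spec}}=1$, discarding the whole sum. Yet $\lambda_1^{p+te}=\min_{v\in H^1_0(\triangle^p)}(P(t)\nabla v,\nabla v)_{\triangle^p}/\|v\|^2_{\triangle^p}$ with $P(t)=S_t^{-1}S_t^{-\intercal}$, $P'(0)=\dot P$, $P''(0)=\ddot P$. Testing with $u_1^p$ gives $\ddot\lambda_1^p\le(\ddot P\nabla u_1^p,\nabla u_1^p)_{\triangle^p}$. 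Differentiating the transported equation twice gives $\ddot\lambda_1=(\ddot P\nabla u_1,\nabla u_1)+2(\dot P\nabla\dot u_1,\nabla u_1)$, which is exactly your minus sign.

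The discarded term is not small. At $E$ with $e=(1,0)$, $\sum_{k\ge2}|(\dot P\nabla u_1,\nabla u_k)|^2=\tfrac{16}{3}\|\partial_{xy}u_1\|^2=\tfrac23\lambda_1^2$, while $(\ddot P\nabla u_1,\nabla u_1)=\tfrac43\lambda_1$ and $\lambda_2-\lambda_1=\tfrac43\lambda_1$. So the tail control you propose is indispensable.

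\begin{itemize}
\item Make the tail bound precise through $(\dot P\nabla u_1,\nabla u_k)=-(\operatorname{div}(\dot P\nabla u_1),u_k)$ and $\|\operatorname{div}(\dot P\nabla u_1)\|\le\|\dot P\|_F\|D^2u_1\|=\|\dot P\|_F\lambda_1$. Convexity gives $u_1\in H^2$, and $\|D^2u\|=\|\Delta u\|$ on polygons.
\item Do not use $\|\dot P\nabla u_1\|$ instead: it only gives $\frac{\lambda_{N+1}}{\lambda_{N+1}-\lambda_1}\|\dot P\nabla u_1\|^2$, which does not decay in $N$.
\item Since $\lambda_{N+1}-\lambda_1$ must be large, you will have to keep the nearly degenerate clusters near $E$ ($\lambda_2\approx\lambda_3,\dots$). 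Handle them through the basis-independent sums $\sum_{k\in\text{cluster}}|(\dot P\nabla u_1,\nabla u_k)|^2$ with cluster-level Liu--Vejchodsk\'y bounds, not through individual eigenfunctions. Individual eigenfunctions are exactly where the paper reports instability.
\end{itemize}

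\textbf{$J_1$ as $y\to0$: still a placeholder.} Siudeja's leading-order sharpness of Makai gives nothing here. You need a non-asymptotic bound $\lambda_1(\triangle^p)\ge\pi^2/y^2+c\,y^{-4/3}$ for $y\le y_0$, with an explicit $c>0$ uniform in the apex abscissa. The gain in $\lambda_1|\triangle|$ is therefore of order $y^{-1/3}$, not $y^{-2/3}$. With $c$ uniform, that gain must beat $\pi^2/2+7\sqrt3\pi^2/12$, not just $7\sqrt3\pi^2/12$, because $|\partial\triangle|-2$ is of order $y$ when the apex nears a base vertex.

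The paper takes this bound from the explicit Airy-type estimate of Endo--Liu for $\widetilde{\triangle}^{(s,t)}$. Uniformity in $s$ needs a separate lemma, $\kappa_1(s)\ge\kappa_1(0)>1$, proved by Steiner symmetrization together with the uniform expansion of Ourmi\`eres-Bonafos. Because the gain grows only like $y^{-1/3}$, the threshold comes out near $y\approx0.05$. Your verified middle region must therefore reach down to about $y=0.04$, as the paper's does.
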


Theorem~\ref{thm:main-problem} settles the lower bound conjecture of Laugesen and Siudeja, implies the sharp two-term lower bound conjectured by Siudeja, and proves the Cheeger-type inequality conjectured by Parini.

\medskip

We next describe the method of proof.  Let $\triangle^p$ denote the triangle with vertices $(0,0)$, $(1,0)$, and $p=(x,y)$, and let $e=(a,b)\in\mathbb{R}^2$ be a direction of perturbation of the third vertex.
We write $
\lambda_i^p:=\lambda_i(\triangle^p)$.
Assuming that $\lambda_i^{p+te}$ is simple for $t$ in a neighborhood of zero, we define
\[
\dot{\lambda}_{i}^{p}
:=
\lim_{t\to0}
\frac{\lambda_i^{p+te}-\lambda_i^p}{t},
\qquad
\ddot{\lambda}_{i}^{p}
:=
\lim_{t\to0}
\frac{\lambda_i^{p+te}+\lambda_i^{p-te}-2\lambda_i^p}{t^2}.
\]
The computable estimate used in the proof may be stated, in simplified form,
as follows.

\begingroup
\renewcommand{\thetheorem}{\ref{thm:main-theorem-est}}
\begin{theorem}
There exists a computable quantity
$\underline{\ddot\lambda}^{\,p}_{i}$, defined explicitly in
\eqref{eq:main-theorem-est}, such that
\begin{equation}\label{eq:lower-bound-ddot}
\ddot{\lambda}_{i}^{p}
\ge
\underline{\ddot\lambda}^{\,p}_{i}.
\end{equation}
\end{theorem}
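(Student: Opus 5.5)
The plan is to start from a spectral (sum-over-states) representation of $\ddot\lambda_i^p$ and to bound its only sign-indefinite piece, an infinite series, from below by finitely many computable terms plus an explicit tail estimate.

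\textbf{Step 1: a global perturbation map.} I will use a family of affine maps $\Phi_t:\triangle^p\to\triangle^{p+te}$ that fix $(0,0)$ and $(1,0)$ and send $p$ to $p+te$. Pulling the Rayleigh quotient back to the fixed triangle $\triangle^p$ gives
\[
\lambda_i^{p+te}=\min\max \frac{\int_{\triangle^p} A(t)\nabla u\cdot\nabla u}{\int_{\triangle^p} |\det D\Phi_t|\,u^2}.
\]
Here $A(t)$ is a constant matrix depending polynomially (in fact rationally) on $t$. Because $\Phi_t$ is affine, Laurain's nonsmooth shape-derivative formulas reduce to perturbation theory for the analytic pencil $t\mapsto (a_t,m_t)$ on the fixed space $H^1_0(\triangle^p)$. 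In this setting simplicity of $\lambda_i^p$ is enough.

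\textbf{Step 2: Kato/Rellich perturbation.} With $u_i$ normalized, this gives $\dot\lambda_i=a'(u_i,u_i)-\lambda_i m'(u_i,u_i)$ and
\[
\ddot\lambda_i = a''(u_i,u_i)-\lambda_i m''(u_i,u_i)-2\dot\lambda_i m'(u_i,u_i) - 2\sum_{j\ne i}\frac{|a'(u_i,u_j)-\lambda_i m'(u_i,u_j)|^2}{\lambda_j-\lambda_i}.
\]
The exact constant factors will be checked. Only the infinite sum is problematic. The terms with $j<i$ have a favorable sign, but they are still only finitely many. The terms with $j>i$ are negative and form an infinite sum.

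\textbf{Step 3: truncation.} Fix $N>i$. For $j>N$, use $\lambda_j-\lambda_i\ge \lambda_{N+1}-\lambda_i$. By Bessel's inequality, the tail sum is then bounded by
\[
\frac{\|w\|_{*}^2-\sum_{j\le N}|\cdot|^2}{\underline\lambda_{N+1}-\lambda_i}.
\]
Here $w$ is the Riesz representative of the functional $a'(u_i,\cdot)-\lambda_i m'(u_i,\cdot)$ in the $m$-inner product (or an energy-weighted variant). Since $A'$ is a constant matrix, $w$ is essentially a constant-coefficient second derivative of $u_i$. Its norm is therefore computable from quantities such as $\int|\partial_{kl}u_i|^2$. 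On a convex polygon these can be bounded via $\|\Delta u_i\|=\lambda_i$; the estimate may need an $H^{-1}$-type weighting so that only gradients appear. This yields a finite-dimensional lower bound involving only $\lambda_1,\dots,\lambda_N,\lambda_{N+1}$ and the inner products $a'(u_i,u_j)$, $m'(u_i,u_j)$ for $j\le N$.

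\textbf{Step 4: making it computable (main obstacle).} Replace the eigenvalues by verified two-sided bounds: Crouzeix--Raviart lower bounds and conforming FEM upper bounds. Replace the eigenfunctions by FEM approximations $\hat u_j$, and control the eigenspace errors with the Liu--Vejchodsk\'y estimates. Each bilinear term then differs from its FEM value by at most $C\cdot\mathrm{err}$, with explicit constants. Every term can then be bounded below using interval arithmetic, which defines $\underline{\ddot\lambda}^{\,p}_i$.

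The hard part will be the cluster $j$ near $i$. There, near-degenerate gaps $\lambda_j-\lambda_i$ amplify the errors. I would handle this by grouping such eigenvalues into clusters and using the cluster eigenspace estimate, with the gap taken to the nearest non-cluster eigenvalue. A related difficulty is keeping the Bessel remainder nonnegative after FEM replacement. I will handle this by bounding $\sum_{j\le N}|\cdot|^2$ from below via projection onto the approximate eigenspace together with the eigenspace error bound.
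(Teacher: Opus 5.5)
Your argument is sound, and it departs from the paper at the decisive step. At that step your version, not the paper's, is the correct one. Your Steps 1 and 4 match the paper's ingredients: the affine pullback produces the same $\dot P,\ddot P$, and the paper likewise uses FEM enclosures, Liu--Vejchodsk\'y bounds and a Cauchy--Schwarz replacement of $u_k$ by $\hat u_k$.

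The paper's route is as follows. It takes $\ddot\lambda_i^p=(\ddot P\nabla u_i^p,\nabla u_i^p)-2(\dot P\nabla\dot u_i^p,\nabla u_i^p)$ from Lemma~\ref{lem:second-order-difference-quotient-formula}. Expanding $\dot u_i^p$ spectrally gives $+2\sum_{k\ne i}|(\dot P\nabla u_i^p,\nabla u_k^p)|^2/(\lambda_k^p-\lambda_i^p)$. It then declares the tail $k>N$ nonnegative and drops it. Its $\underline{\ddot\lambda}^{\,p}_i$ is therefore the truncated sum at the FEM functions minus $\mathrm{Est}_a$-perturbation terms, with no tail estimate at all.

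Your minus sign is right. After dividing out the constant Jacobian, your pencil is $(\tilde A(t)\nabla u,\nabla v)$ against the fixed $L^2$ product, with $\tilde A(t)=D\Phi_t^{-1}D\Phi_t^{-\mathsf T}$, $\tilde A'(0)=\dot P$ and $\tilde A''(0)=\ddot P$. Differentiating $(\tilde A(t)\nabla u_t,\nabla v)=\lambda(t)(u_t,v)$ twice and testing with $u_i^p$ gives $\ddot\lambda_i^p=(\ddot P\nabla u_i^p,\nabla u_i^p)+2(\dot P\nabla\dot u_i^p,\nabla u_i^p)$ for the paper's own $\dot u_i^p$. That is, the sum enters as $-2\sum_{k\ne i}$.

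Two quick checks confirm this.
\begin{itemize}
\item For $i=1$, $\lambda_1^{p+te}\le(\tilde A(t)\nabla u_1^p,\nabla u_1^p)$ with equality at $t=0$, so $\ddot\lambda_1^p\le(\ddot P\nabla u_1^p,\nabla u_1^p)$. The plus sign contradicts this unless every coupling with $k\ge2$ vanishes. At $p_0$ with $e=(1,0)$ that would force $\partial_{xy}u_1\equiv0$, which is impossible.
\item For the matrix model $\tilde A(t)=\bigl(\begin{smallmatrix}1&t\\ t&2\end{smallmatrix}\bigr)$ one has $\lambda_1''(0)=-2$, whereas the analogue of the paper's formula returns $+2$.
\end{itemize}
So discarding the tail is not available, and your Bessel tail estimate is the indispensable ingredient.

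What you prove is therefore a lower bound by a corrected quantity, not by the literal quantity in \eqref{eq:main-theorem-est}. The corrected quantity is the truncated sum with the minus sign, minus a tail term, made computable as in your Step~4. The tail term is $\frac{2}{\lambda_{N+1}-\lambda_i}\bigl(\|w\|^2-\sum_{k\le N}|(\dot P\nabla u_i,\nabla u_k)|^2\bigr)$ with $w=-\operatorname{div}(\dot P\nabla u_i)$. Alternatively, use the energy-weighted analogue with prefactor $\frac{2\lambda_{N+1}}{\lambda_{N+1}-\lambda_i}$, $\|\dot P\nabla u_i\|^2$ and weights $1/\lambda_k$.

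The paper's quantity is not a lower bound in general. For $N=1$ it tends to $(\ddot P\nabla u_1^p,\nabla u_1^p)>\ddot\lambda_1^p$ as $\mathrm{Est}_a\to0$. The paper's Step~1 uses $N_{\mathrm{spec}}=1$ precisely because it regards the omitted tail as nonnegative.

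The price of correctness is sharpness at small $N$. Take $p_0$ in the $x$-direction. On a convex polygon $\|D^2u_1\|=\|\Delta u_1\|$, and together with the symmetry of $u_1$ this gives the exact value $\|\partial_{xy}u_1\|^2=\lambda_1^2/8$. Even with this, your $N=1$ bound gives only $\ddot\lambda_1\ge\frac13\lambda_1$. Then $\frac{\sqrt3}{4}\cdot\frac13\lambda_1\approx7.6$ does not compensate $R^{(1)}_{xx}(p_0)\approx-12.8$ from Lemma~\ref{lem:Ji-xx-yy-simpler-nopA}. Certifying convexity therefore needs larger $N$ together with the cluster-wise eigenspace bounds you anticipate. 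That is exactly the instability the paper sidestepped.
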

\endgroup
We apply this estimate to show the local optimality of the two scale-invariant functionals $J_1$ and $J_2$.
\medskip

By scaling, rotation, and reflection, every triangle may be represented by
\[
\triangle^p=\operatorname{conv}\{(0,0),(1,0),p\},
\qquad
p=(x,y)\in\Omega,
\]
where $\operatorname{conv}(E)\subset \mathbb{R}^2$ is the convex hull of $E\subset \mathbb{R}^2$ and 
\[
\Omega
:=
\left\{
(x,y)\in\mathbb{R}^2:
x^2+y^2\le 1,\ x\ge \frac12,\ y>0
\right\}.
\]
The equilateral triangle corresponds to
\[
p_0:=\left(\frac12,\frac{\sqrt3}{2}\right).
\]
For the computer-assisted proof, we consider the following decomposition of the shape space $\Omega$:
\[
\Omega=\Omega_{\mathrm{up}}\cup\Omega_{\mathrm{mid}}\cup\Omega_{\mathrm{down}},
\]
where
\begin{align*}
\Omega_{\mathrm{up}}
&:=
\left\{
(x,y)\in\Omega:
y\ge \frac{\sqrt3}{2}-\varepsilon_{\mathrm{up}}
\right\},\\
\Omega_{\mathrm{mid}}
&:=
\left\{
(x,y)\in\Omega:
\varepsilon_{\mathrm{down}}\le y,\ 
(x-\tfrac12)^2+(y-\tfrac{\sqrt3}{2})^2
\ge \varepsilon_{\mathrm{up}}^2
\right\},\\
\Omega_{\mathrm{down}}
&:=
\left\{
(x,y)\in\Omega:
y<\varepsilon_{\mathrm{down}}
\right\},
\end{align*}
with the values of 
$
\varepsilon_{\mathrm{up}}$ and  $
\varepsilon_{\mathrm{down}}
$ decided in \eqref{eq:epsilon_up_down} in the computer-assisted proof. 
These subregions and the contour plots of $J_1$ and $J_2$ are illustrated in
Figures~\ref{fig:omega-subregions} and~\ref{fig:J1J2-contours}, respectively.

\begin{figure}[H]
    \centering

    \begin{subfigure}[t]{0.47\textwidth}
        \centering
        \resizebox{\linewidth}{!}{%
            \begin{tikzpicture}[scale=6,>=stealth]

  \def\eps{0.3} 

  \coordinate (O) at (0,0);
  \coordinate (A) at (0,0);                     
  \coordinate (B) at (1,0);                     
  \coordinate (C) at ({1/2},{sqrt(3)/2});       
  \coordinate (X) at ({1/2},0);                 
  \coordinate (P) at ($(C)+(0,-\eps)$);         

  \coordinate (R) at ({atan(sqrt(3) - 2*\eps)}:1);

  \draw (A) -- (B);
  \draw[name path=outer] (B) arc[start angle=0,end angle=60,radius=1];
  \draw (X) -- (C);


  
\draw[dash pattern=on 1pt off 1pt] (0.5,0.15) -- ({sqrt(1-0.15*0.15)},0.15);

  \draw[dashed]
  (P) -- ({sqrt(1-(sqrt(3)/2-\eps)*(sqrt(3)/2-\eps))},
          {sqrt(3)/2-\eps});

  
\draw[dash pattern=on 1pt off 1pt, name path=inner]
    (C) ++(-90:\eps) arc[start angle=-90,end angle=-40,radius=\eps];

  \draw[decorate,decoration={brace,mirror,raise=2pt}]
    (C) -- (P) node[midway,left=4pt] {$\varepsilon_{\mathrm{up}}$};

\draw[decorate,decoration={brace,raise=2pt}]
    (X) -- (0.5,0.15) node[midway,left=4pt] {$\varepsilon_{\mathrm{down}}$};

  \node at (0.59,0.70) {$\Omega_{\text{up}}$};
  \node at (0.70,0.34) {$\Omega_{\text{mid}}$};
  \node at (0.80,0.08) {$\Omega_{\text{down}}$};

  \node[below left] at (A) {$(0,0)$};
  \node[below]      at (B) {$(1,0)$};
  \node[above]      at (C) {$\left(\tfrac12,\tfrac{\sqrt3}{2}\right)$};

\end{tikzpicture}
        }
        \caption{Subregions $\Omega_{\mathrm{up}}$, $\Omega_{\mathrm{mid}}$, and
        $\Omega_{\mathrm{down}}$}
        \label{fig:omega-subregions}
    \end{subfigure}
    \hspace{0.02\textwidth}
    \begin{subfigure}[t]{0.47\textwidth}
        \centering
        \includegraphics[width=\linewidth]{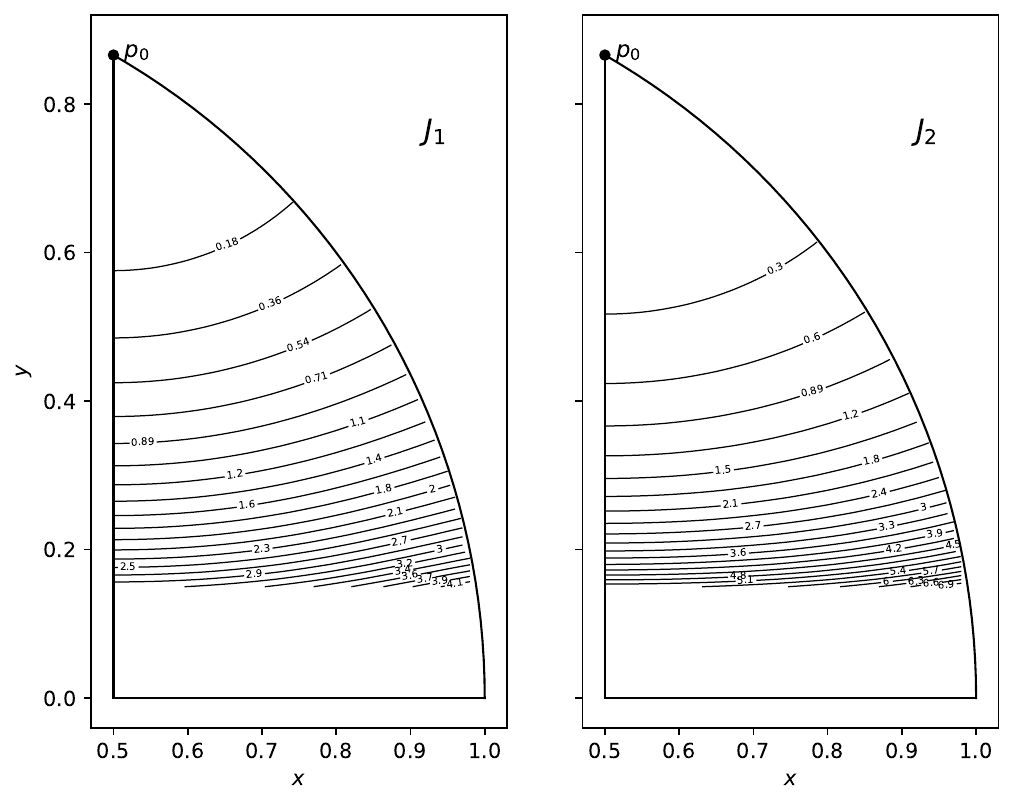}
        \caption{Contour plots of $J_1$ and $J_2$ ~(left: $J_1$, right: $J_2$)}
        \label{fig:J1J2-contours}
    \end{subfigure}

    \caption{Decomposition of the shape space and contour plots of the two
    scale-invariant functionals.}
    \label{fig:omega-intro}
\end{figure}

The proof of Theorem~\ref{thm:main-problem} is divided into three parts.

\begin{enumerate}[label=\emph{(\roman*)},leftmargin=*]
\item \textbf{The region $\Omega_{\mathrm{up}}$.}
First, we prove analytically that
\[
\nabla J_k(\triangle^{p_0})=0
\qquad (k=1,2).
\]
We then use the computable lower bound \eqref{eq:lower-bound-ddot} to verify 
\[
\frac{\partial^2 J_k}{\partial x^2}>0
\qquad (k=1,2),
\]
over $\Omega_{\mathrm{up}}$.
Since $J_k$ is symmetric with respect to reflection across the axis $x=1/2$, this convexity implies that any minimizer of $J_k$ in $\Omega_{\mathrm{up}}$ must lie on the symmetry axis $x=1/2$.
On this axis, we further verify
\[
\frac{\partial^2 J_k}{\partial y^2}>0
\qquad (k=1,2).
\]
It follows that $p_0$ is the unique minimizer of both $J_1$ and $J_2$ in $\Omega_{\mathrm{up}}$. See Section \ref{sec:case1}.

\item \textbf{The region $\Omega_{\mathrm{mid}}$.}
The region $\Omega_{\mathrm{mid}}$ is covered by finitely many verification cells $\{\mathcal{C}_{ij}\}$.
On each cell, the domain monotonicity of Dirichlet eigenvalues, together with rigorous eigenvalue enclosures at selected vertices of the cell, yields uniform lower bounds for $J_1$ and $J_2$.
The certified computation gives
\[
J_1(\triangle^p)\ge 6.85\cdot 10^{-7},
\qquad
J_2(\triangle^p)\ge 1.23\cdot 10^{-5}
\qquad
\text{for all }p\in\Omega_{\mathrm{mid}}.
\]
Thus neither functional can attain its global minimum in $\Omega_{\mathrm{mid}}$. See Section \ref{sec:case2}.

\item \textbf{The region $\Omega_{\mathrm{down}}$.}
In the nearly degenerate regime $y\to0$, the first Dirichlet eigenvalue satisfies
\[
\lambda_1(\triangle^p)\sim \frac{\pi^2}{y^2}.
\]
Direct finite element verification is therefore inefficient, and we treat
$J_1$ and $J_2$ separately.

For $J_1$, the leading term $\pi^2/y^2$ cancels with the leading perimeter
contribution.  We therefore use the lower bound of
Theorem~3.5 in \cite{Endo-Liu-2025-JDE-degenerate} for thin triangles: for
$\widetilde{\triangle}^{(s,t)}=\operatorname{conv}\{(-1,0),(1,0),(s,t)\}$
and $t\in(0,t_0]$,
\[
  t^{4/3}\left(
  \lambda_k\bigl(\widetilde{\triangle}^{(s,t)}\bigr)
  -\frac{\pi^2}{t^2}
  \right)
  \ge
  \frac{(2\pi^2)^{2/3}\kappa_k(s)}
  {1+\frac{t_0^{2/3}}{3\pi^2}(2\pi^2)^{2/3}\kappa_k(s)},
\]
where $\kappa_k(s)$ is the $k$-th positive root of
\[
  \sqrt[3]{1+s}\,
  \mathcal{A}\bigl((1+s)^{2/3}\kappa\bigr)
  \mathcal{A}'\bigl((1-s)^{2/3}\kappa\bigr)
  +
  \sqrt[3]{1-s}\,
  \mathcal{A}\bigl((1-s)^{2/3}\kappa\bigr)
  \mathcal{A}'\bigl((1+s)^{2/3}\kappa\bigr)=0
\]
with $\mathcal{A}(u):=\mathrm{Ai}(-u)$.  This gives the required
Airy-type correction to the leading singular term and yields
\[
J_1(\triangle^p)>0
\qquad
\text{for all }p\in\Omega_{\mathrm{down}}.
\]

For $J_2$, we use the lower bound of
Freitas--Siudeja~\cite[Corollary~4.1]{Freitas-Siudeja-2010}.  In the present
normalization, this estimate gives
\[
\lambda_1(\triangle^p)
\ge
\frac{\pi^2(1+y)^2}{y^2}.
\]
Together with elementary geometric estimates, it follows
that
\[
J_2(\triangle^p)>0
\qquad
\text{for all }p\in\Omega_{\mathrm{down}}.
\]
See Section \ref{sec:case3}.
\end{enumerate}

The verifications in $\Omega_{\mathrm{up}}$ and $\Omega_{\mathrm{mid}}$ are implemented in Algorithms~\ref{alg:eigen-errors}--\ref{algorithm-3}.
These algorithms combine validated finite element eigenvalue estimates, eigenvalue perturbation bounds, and the computable eigenspace error estimates of Liu--Vejchodsk\'y~\cite{liu2022fully}.
Combining the three estimates above, we conclude that $p_0$ is the unique global minimizer of $J_k$ on $\Omega$ for $k=1,2$.

The method is not specific to the functionals $J_1$ and $J_2$ considered in
this paper. In principle, the same strategy can be applied to other geometric
scale-invariant Dirichlet eigenvalue functionals on triangles with suitable choices of $
\varepsilon_{\mathrm{up}}$ and  $
\varepsilon_{\mathrm{down}}
$, provided that
one can obtain similar analytic control in the nearly degenerate regime.


The code used in the computer-assisted proof is available at
\begin{center}
\url{https://github.com/ryendo/LowerBoundsIneq}.
\end{center}

\subsection*{Structure of the paper}

The remainder of the paper is organized as follows.
In Section~\ref{Preliminaries}, we introduce the notation and recall explicit first- and second-order directional derivative formulas for Dirichlet eigenvalues on triangles.
In Section~\ref{eq:est-drivatives}, we derive a computable lower bound for the second-order directional derivative.
Finally, in Section~\ref{Sec:Proof-of-Lower}, we complete the computer-assisted proof of the main theorem.

\section{Preliminaries}\label{Preliminaries}

Let $\triangle \subset \mathbb{R}^2$ be a triangular domain. We use the standard notation for Sobolev spaces.  
The space $L^2(\triangle)$ denotes the real Hilbert space of square-integrable functions on $\triangle$, equipped with the inner product and norm
\[
  (u,v)_\triangle := \int_\triangle uv\,dx,
  \qquad
  \|v\|_\triangle := (v,v)_\triangle^{1/2}.
\]
We denote by $H^1(\triangle)$ the Sobolev space of functions in $L^2(\triangle)$ whose weak first derivatives also belong to $L^2(\triangle)$.  
Moreover, $H^1_0(\triangle)$ denotes the closure of $C_0^\infty(\triangle)$ in $H^1(\triangle)$.  
Since functions in $H^1_0(\triangle)$ vanish on $\partial\triangle$, the bilinear form
\[
  (u,v)_{H^1_0(\triangle)} := (\nabla u,\nabla v)_\triangle
  \qquad \text{for } u,v \in H^1_0(\triangle)
\]
defines an inner product on $H^1_0(\triangle)$.

The weak formulation of the Dirichlet eigenvalue problem for the Laplacian is to find $(\lambda,u)\in (0,\infty)\times (H^1_0(\triangle)\setminus\{0\})$ such that
\begin{equation}\label{eq:eigenvalue-problem}
  (\nabla u,\nabla v)_\triangle
  = \lambda (u,v)_\triangle
  \qquad \forall v\in H^1_0(\triangle).
\end{equation}

Since the inverse of the Laplacian is a compact and self-adjoint operator, the spectral theorem guarantees that \eqref{eq:eigenvalue-problem} admits a countably infinite sequence of eigenvalues, which can be arranged as
\[
  0 < \lambda_1(\triangle) < \lambda_2(\triangle) \le \lambda_3(\triangle) \le \cdots,
\]
each of finite multiplicity, and satisfying $\lim_{i\to \infty}\lambda_i(\triangle)=+\infty$.

Next, we specialize the general shape derivative formulas in \cite{Bogosel-Bucur-2024} to triangular domains and derive explicit formulas for the first- and second-order directional derivatives with respect to vertex perturbations.

\subsection{Derivative formulas}

For $p=(x_0,y_0)\in\mathbb{R}^2$, let $\triangle^p$ be the triangular domain with vertices $(0,0)$, $(1,0)$, and $(x_0,y_0)$. We denote by $\lambda_i^p$ the $i$th Dirichlet eigenvalue on $\triangle^p$, and by $u_i^p$ an associated $L^2$-normalized eigenfunction.

We first recall Hadamard's formula for the first-order directional derivative of a simple eigenvalue.

\begin{lemma}\label{lem:first-order-formula}
For $e=(a,b)\in\mathbb{R}^2$, assume that $\lambda_i^{p+te}$ remains simple for all $t\in(-\varepsilon,\varepsilon)$. Then the directional derivative of $\lambda_i^p$ in the direction $e$ is given by
\begin{equation}
    \lim_{t\to 0}\frac{\lambda_i^{p+te}-\lambda_i^p}{t}
    =
    \left(\dot P \nabla u_i^p,\nabla u_i^p\right)_{\triangle^p}
    \, (=: \dot\lambda_i^p),
\end{equation}
where the matrix $\dot P$ is defined by
\begin{equation}
    \dot P =
    \begin{pmatrix}
        0 & \displaystyle -\frac{a}{y_0} \\[10pt]
        \displaystyle -\frac{a}{y_0} & \displaystyle -\frac{2b}{y_0}
    \end{pmatrix}.
\end{equation}
\end{lemma}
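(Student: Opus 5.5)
We prove the formula by transporting every perturbed problem back to the fixed reference triangle $\triangle^p$ through an affine map, and then applying first-order perturbation theory for a simple eigenvalue of a parameter-dependent symmetric form.

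The map is the linear $\Phi_t:\triangle^p\to\triangle^{p+te}$ that fixes $(0,0)$ and $(1,0)$ and sends $p$ to $p+te$. Since $\Phi_t$ fixes $(1,0)$, its first column is $(1,0)^T$, and its Jacobian is
\[
D\Phi_t=\begin{pmatrix}1 & ta/y_0\\ 0 & 1+tb/y_0\end{pmatrix},\qquad \det D\Phi_t=1+\frac{tb}{y_0}.
\]
Pulling back $v\mapsto v\circ\Phi_t$ identifies $H^1_0(\triangle^{p+te})$ with $H^1_0(\triangle^p)$. The Rayleigh quotient on $\triangle^{p+te}$ then becomes
\[
\frac{(P(t)\nabla w,\nabla w)_{\triangle^p}}{(w,w)_{\triangle^p}},
\qquad
P(t)=\frac{D\Phi_t^{-1}D\Phi_t^{-T}}{\det D\Phi_t^{-1}}\cdot\frac{1}{\det D\Phi_t}\cdot \det D\Phi_t .
\]
Equivalently, the stiffness matrix is $\det(D\Phi_t)\,D\Phi_t^{-1}D\Phi_t^{-T}$ and the mass weight is the constant $\det D\Phi_t$. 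Because that weight is constant, we divide it out and obtain the stiffness matrix $P(t)=D\Phi_t^{-1}D\Phi_t^{-T}$ with the unweighted $L^2$ mass. The eigenvalues $\lambda_i^{p+te}$ are then exactly the eigenvalues of the form $(P(t)\nabla w,\nabla w)$ relative to $(w,w)$ on the fixed space $H^1_0(\triangle^p)$.

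Next we compute $\dot P=P'(0)$. Write $D\Phi_t=I+tB$ with $B=\begin{pmatrix}0&a/y_0\\0&b/y_0\end{pmatrix}$. Then $D\Phi_t^{-1}=I-tB+O(t^2)$, so
\[
P(t)=I-t(B+B^T)+O(t^2),\qquad
\dot P=-(B+B^T)=\begin{pmatrix}0&-a/y_0\\-a/y_0&-2b/y_0\end{pmatrix},
\]
which matches the stated matrix.

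It remains to justify that $\frac{d}{dt}\lambda_i(t)\big|_{t=0}=(\dot P\nabla u,\nabla u)$ for the normalized eigenfunction $u=u_i^p$. The family $P(t)$ is analytic in $t$ and uniformly positive definite near $t=0$. The forms $a_t(w,w)=(P(t)\nabla w,\nabla w)$ therefore form a holomorphic family of type (B) in Kato's sense on a fixed form domain. Since $\lambda_i^p$ is simple (and stays simple for $|t|<\varepsilon$), Kato–Rellich theory gives analytic branches $\lambda(t)$ and $u(t)$ with $u(0)=u$ and $\|u(t)\|=1$. Differentiate the weak identity $a_t(u(t),v)=\lambda(t)(u(t),v)$ and test with $v=u$. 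The terms involving $u'(0)$ cancel, because $a_0(u'(0),u)=\lambda(0)(u'(0),u)$ by symmetry. What remains is $\lambda'(0)=(\dot P\nabla u,\nabla u)$.

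The main obstacle is this regularity of the eigenpair branch. It is dispatched by analytic perturbation theory, because everything lives on a fixed Hilbert space and the $t$-dependence sits only in a bounded matrix coefficient. One could alternatively use min-max two-sided bounds: testing with $u$ gives an upper bound for the difference quotient, and testing on the perturbed domain with the perturbed eigenfunction gives a lower bound. Simplicity together with eigenfunction convergence then closes the gap. The nonsmoothness of $\partial\triangle$ causes no trouble, since no boundary integrals appear in this volume form.
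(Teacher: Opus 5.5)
Your argument is correct, but it takes a different route from the paper. The paper gives no derivation; it specializes the general volume shape-derivative formula for polygons under an arbitrary Lipschitz deformation field $V$ (\cite[Theorem~2.1(i)]{Bogosel-Bucur-2024}, after \cite{Laurain-2020}) to the field that moves the third vertex. You instead use that this deformation is a single linear map $\Phi_t=I+tB$. The Jacobian is then constant, the weight $\det D\Phi_t$ cancels from the Rayleigh quotient, and $\lambda_i^{p+te}$ becomes an eigenvalue of the analytic family of forms $(P(t)\nabla w,\nabla w)$, $P(t)=D\Phi_t^{-1}D\Phi_t^{-\mathsf T}$, on the fixed space $H^1_0(\triangle^p)$. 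Hellmann--Feynman then gives $\dot\lambda_i^p=(P'(0)\nabla u_i^p,\nabla u_i^p)$ with $P'(0)=-(B+B^{\mathsf T})=\dot P$. In the general formula the same cancellation appears as the vanishing of the $\operatorname{div}V$ terms, since $\operatorname{div}V=b/y_0$ is constant and $\|\nabla u_i^p\|^2=\lambda_i^p$. Your route is self-contained and avoids shape calculus on nonsmooth domains. One more order of your expansion gives $P''(0)=2\bigl(B^2+BB^{\mathsf T}+(B^{\mathsf T})^2\bigr)$, which is exactly the paper's $\ddot P$. The citation, in turn, buys generality (any polygon, any vertex). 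Your framework also makes the second-order formula easy to check. Testing with the fixed function $u_1^p$ gives $\lambda_1^{p+te}\le(P(t)\nabla u_1^p,\nabla u_1^p)$, hence $\ddot\lambda_1^p\le(\ddot P\nabla u_1^p,\nabla u_1^p)$. Differentiating your weak identity twice gives $\ddot\lambda_i^p=(\ddot P\nabla u_i^p,\nabla u_i^p)+2(\dot P\nabla\dot u_i^p,\nabla u_i^p)$ with $\dot u_i^p$ as defined in Lemma~\ref{lem:second-order-difference-quotient-formula}. That is the opposite sign to the one printed there: the spectral sum in Lemma~\ref{lem:second_derivative_spectral} should then carry a minus sign, and for $i=1$ truncating it gives an upper bound. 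If you go on to second order, derive that formula yourself rather than copying it. Two small fixes to your write-up. First, your initial display for $P(t)$ simplifies to $\det(D\Phi_t)\,D\Phi_t^{-1}D\Phi_t^{-\mathsf T}$, which is inconsistent with the unweighted denominator, so delete it and keep the definition from the following sentence. Second, add one line explaining why the analytic branch through the simple, isolated eigenvalue $\lambda_i^p$ is the ordered eigenvalue $\lambda_i^{p+te}$. By min--max, $\lambda_{\min}(P(t))\,\lambda_k^p\le\lambda_k^{p+te}\le\lambda_{\max}(P(t))\,\lambda_k^p$ for every $k$, so the ordered eigenvalues are continuous in $t$.
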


\begin{proof}
     This follows by a formula given in  \cite[Theorem 2.1(i)]{Bogosel-Bucur-2024} (see also  \cite{Laurain-2020}) with the computation carried out for triangles. 
\end{proof}

Recall the second‐order derivative formula, which involves the material derivative of the eigenfunction.

\begin{lemma}\label{lem:second-order-difference-quotient-formula}
    For $e=(a,b)\in\mathbb{R}^2$, assume that $\lambda_i^{p+te}$ remains simple for all $t\in(-\varepsilon,\varepsilon)$. Then, the second‐order directional derivative of $\lambda_i^p$ in the direction $e$ is given by
    \begin{equation}\label{eq:second-order-difference-quotient-formula}
        \lim_{t\to 0}\frac{\lambda_i^{p+te}+\lambda_i^{p-te}-2\lambda_i^{p}}{t^2}=
        \left(\ddot P\nabla u_i^p,\nabla u_i^p\right)_{\triangle^p}-2\left(\dot P\nabla \dot u_i^p,\nabla u_i^p\right)_{\triangle^p}(=:\ddot\lambda_i^p),
    \end{equation}
    where the matrix $\ddot{P}$ is defined by
    \begin{equation*}
        \ddot{P} :=
        \begin{pmatrix}
        \displaystyle \frac{2a^2}{y_0^2} & \displaystyle \frac{4ab }{y_0^2} \\[12pt]
        \displaystyle \frac{4ab}{y_0^2} & \displaystyle \frac{6b^2}{y_0^2}
        \end{pmatrix}.
    \end{equation*}
    Here, $\dot u_i^p$ denotes the material derivative of the eigenfunction, i.e. the unique function $\dot u_i^p\in H^1_0(\triangle^p)$ satisfying the following variational equation: find $\dot u_i^p\in H^1_0(\triangle^p)$ such that
    \begin{equation}
    \begin{cases}
        (\nabla \dot u_i^p,\nabla v)_{\triangle^p}-\lambda_i^p(\dot u_i^p,v)_{\triangle^p}=-(\dot{P}\nabla u_i^p,\nabla v)_{\triangle^p}+\dot{\lambda}_i^p\cdot (u_i^p,v)_{\triangle^p}~~\forall v\in H^1_0(\triangle^p),\\
        (\dot u_i^p,u^p_i)_{\triangle^p}=0.
    \end{cases}
    \end{equation}
\end{lemma}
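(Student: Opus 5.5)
The plan is to pull everything back to the fixed triangle $\triangle^p$ by an explicit affine map, which turns the perturbation into an analytic one-parameter family of quadratic forms on the fixed space $H^1_0(\triangle^p)$, and then to differentiate the weak eigenvalue equation twice. Define $T_t(X,Y)=(X+t\,\tfrac{a}{y_0}Y,\;Y+t\,\tfrac{b}{y_0}Y)$. It fixes $(0,0)$ and $(1,0)$, sends $p$ to $p+te$, and maps $\triangle^p$ onto $\triangle^{p+te}$. With $c=ta/y_0$ and $s=tb/y_0$ we have $DT_t=\begin{pmatrix}1&c\\0&1+s\end{pmatrix}$, and the Jacobian $1+s$ is constant in space. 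Writing $w=u\circ T_t$, it therefore cancels between numerator and denominator of the Rayleigh quotient, so $\lambda_i^{p+te}$ is the $i$th eigenvalue of
\[
(B(t)\nabla w,\nabla v)_{\triangle^p}=\lambda\,(w,v)_{\triangle^p},\qquad
B(t)=DT_t^{-1}DT_t^{-T}=\begin{pmatrix}1+\frac{c^2}{(1+s)^2}&-\frac{c}{(1+s)^2}\\[2pt]-\frac{c}{(1+s)^2}&\frac{1}{(1+s)^2}\end{pmatrix}.
\]

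Next I expand $B(t)$ in $t$. This gives $B'(0)=\dot P$, which recovers Lemma~\ref{lem:first-order-formula}. At second order the entries are $c^2$, $2cs$ and $3s^2$, so $B''(0)=\ddot P$, exactly the stated matrix. Since $B(t)$ is a real-analytic, uniformly elliptic family of forms on the fixed Hilbert space, Kato's analytic perturbation theory applies to the isolated simple eigenvalue $\lambda_i^{p+te}$. It yields analytic branches $\lambda(t)$ and $w(t)$ with $\|w(t)\|=1$ and $w(0)=u_i^p$. Analyticity makes the symmetric second difference quotient converge to $\lambda''(0)$, so it suffices to compute $\lambda''(0)$.

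To do so I differentiate the identity $(B\nabla w,\nabla v)=\lambda(w,v)$ in $t$ at $t=0$. The first derivative gives $(\nabla\dot w,\nabla v)-\lambda(\dot w,v)=-(\dot P\nabla u,\nabla v)+\dot\lambda(u,v)$, and differentiating $\|w\|^2=1$ gives $(\dot w,u)=0$. This is precisely the variational problem in the statement. It has a unique solution by the Fredholm alternative, because the right-hand side annihilates $u$ exactly when $\dot\lambda=(\dot P\nabla u,\nabla u)$. Hence $\dot w=\dot u_i^p$. Differentiating a second time and testing with $v=u$, I would use $(\nabla\ddot w,\nabla u)=\lambda(\ddot w,u)$ and $(\dot w,u)=0$. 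This cancels every term containing $\ddot w$ or the product $\dot\lambda(\dot w,u)$, and isolates $\ddot\lambda$ as $(\ddot P\nabla u,\nabla u)$ plus a cross term $\pm 2(\dot P\nabla\dot u,\nabla u)$.

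The main obstacle is justifying the differentiability and correctly tracking this cross term. My direct differentiation yields $+2(\dot P\nabla\dot u,\nabla u)$. Testing the $\dot u$-equation with $v=\dot u$ converts it to
\[
2(\dot P\nabla\dot u,\nabla u)=-2\bigl(\|\nabla\dot u\|^2-\lambda\|\dot u\|^2\bigr),
\]
which I expect is the form behind the paper's sign convention. I would therefore carry out this bookkeeping carefully and reconcile it with the displayed formula, cross-checking against \cite{Bogosel-Bucur-2024,Laurain-2020}. I would also verify the formula on an explicit case, for example the perturbation $e=(0,1)$, which is a pure dilation in $y$ where $\lambda''$ can be computed by hand.
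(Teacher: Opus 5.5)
Your pullback is sound, and it is more self-contained than the paper's proof, which only cites the general formula of Bogosel--Bucur. The map $T_t$ has constant Jacobian, $B(0)=I$, $B'(0)=\dot P$ and $B''(0)=\ddot P$. Kato's theory gives analytic $\lambda(t)$ and $w(t)$, and the first differentiation reproduces exactly the stated problem for $\dot u_i^p$.

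The gap is your last step, and the fault lies with the statement, not with your bookkeeping. Differentiating twice and testing with $u_i^p$ gives
\[
\ddot\lambda_i^p=\bigl(\ddot P\nabla u_i^p,\nabla u_i^p\bigr)_{\triangle^p}+2\bigl(\dot P\nabla \dot u_i^p,\nabla u_i^p\bigr)_{\triangle^p}
=\bigl(\ddot P\nabla u_i^p,\nabla u_i^p\bigr)_{\triangle^p}-2\bigl(\|\nabla\dot u_i^p\|^2_{\triangle^p}-\lambda_i^p\|\dot u_i^p\|^2_{\triangle^p}\bigr).
\]
No convention turns this into the displayed $-2(\dot P\nabla\dot u_i^p,\nabla u_i^p)_{\triangle^p}$. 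The two differ by $4(\dot P\nabla\dot u_i^p,\nabla u_i^p)_{\triangle^p}$, which is generally nonzero. Your identity from testing with $v=\dot u$ shows that the ``quadratic form'' version equals $+2(\dot P\nabla\dot u,\nabla u)$, not $-2(\dot P\nabla\dot u,\nabla u)$. If that form is what lies behind the citation, the sign was lost when it was rewritten. So do not try to reconcile: your sign is correct, and the lemma as printed is false.

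You can confirm this without computing $\dot u$. For $i=1$, min--max with the fixed trial function $u_1^p$ gives $\lambda_1^{p\pm te}\le(B(\pm t)\nabla u_1^p,\nabla u_1^p)_{\triangle^p}$. Since $B(t)+B(-t)=2I+t^2\ddot P+O(t^4)$, it follows that $\ddot\lambda_1^p\le(\ddot P\nabla u_1^p,\nabla u_1^p)_{\triangle^p}$.

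The printed formula instead gives $\ddot\lambda_1^p=(\ddot P\nabla u_1^p,\nabla u_1^p)_{\triangle^p}+2(\|\nabla\dot u_1^p\|^2-\lambda_1^p\|\dot u_1^p\|^2)$. This is strictly larger whenever $\dot u_1^p\neq0$, because $\dot u_1^p\perp u_1^p$ forces $\|\nabla\dot u_1^p\|^2\ge\lambda_2^p\|\dot u_1^p\|^2$. That happens, for example, at $p_0$ with $e=(0,1)$, where $\dot u_1^{p_0}=0$ would require $\partial_{yy}U_1=\partial_{xx}U_1$, which fails for the explicit $U_1$.

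Your planned test with $e=(0,1)$ would not detect the sign. A $y$-dilation of a triangle is not a similarity, so $\lambda''$ is not available by hand, and on a rectangle the cross term vanishes. The min--max bound above is a reliable check. So is the matrix model $A(t)=\mathrm{diag}(1,2)+t\bigl(\begin{smallmatrix}0&1\\1&0\end{smallmatrix}\bigr)$, where the true value $\lambda_1''(0)=-2$ is opposite to the printed formula's $+2$.

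The sign also matters downstream. Inserting $\dot u_i^p=\sum_{k\neq i}c_k u_k^p$ gives $\ddot\lambda_i^p=(\ddot P\nabla u_i^p,\nabla u_i^p)_{\triangle^p}-2\sum_{k\neq i}|(\dot P\nabla u_i^p,\nabla u_k^p)_{\triangle^p}|^2/(\lambda_k^p-\lambda_i^p)$. Hence for $i=1$, truncating the series gives an upper bound on $\ddot\lambda_1^p$, not a lower one.
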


\begin{proof}
     This follows by a formula given in  \cite[Theorem 2.1(ii)]{Bogosel-Bucur-2024} (see also  \cite{Laurain-2020}) with the computation done on triangles. 
\end{proof}

\section{Main Theory}\label{eq:est-drivatives}

In this section, we derive a lower bound for the discrete approximation of the second-order directional derivative, together with an a priori error bound for the discrete first-order shape derivative.

\subsection{Lower bound estimate for the second-order directional derivative}

Recall that
\begin{equation*}
    \ddot{\lambda}_i^p
    = \bigl(\ddot P \nabla u_i^p, \nabla u_i^p\bigr)_{\triangle^p}
      - 2 \bigl(\dot P \nabla\dot u_i^p, \nabla u_i^p\bigr)_{\triangle^p},
\end{equation*}
where the material derivative $\dot u_i^p$ is characterized as the unique solution of the following variational equation.

Find $\dot u_i^p \in H_0^1(\triangle^p)$ such that
\begin{equation}\label{eq:problem-w}
\begin{cases}
    (\nabla \dot u_i^p, \nabla v)_{\triangle^p}
    - \lambda_i^p (\dot u_i^p,v)_{\triangle^p}
    = -(\dot P \nabla u_i^p, \nabla v)_{\triangle^p}
      + \dot{\lambda}_i^p (u_i^p, v)_{\triangle^p}
      & \forall v \in H_0^1(\triangle^p),\\[0.3em]
    (\dot u_i^p, u_i^p)_{\triangle^p} = 0.
\end{cases}
\end{equation}

The following lemma gives the spectral expansion of the material derivative $\dot u_i^p$.
\begin{lemma}[]
\label{lem:w-expansion-convergence}
Suppose that $\lambda_i^p$ is simple. Then, the material derivative $\dot u_i^p$ admits the expansion
\begin{equation}\label{eq:w-expansion-H1}
  \dot u_i^p=\sum_{\substack{k=1\\k\neq i}}^\infty c_k\,u_k^p
  \quad\text{ in }H_0^1(\triangle^p)
\end{equation}
where
\begin{equation}\label{eq:ck-formula-H1}
  c_k=-\frac{(\dot P\nabla u_i^p,\nabla u_k^p)_{\triangle^p}}{\lambda_k^p-\lambda_i^p}~~
  (k\neq i).
\end{equation}
\end{lemma}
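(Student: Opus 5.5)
The approach is to expand $\dot u_i^p$ in the orthonormal eigenbasis $\{u_k^p\}_{k\ge1}$ of $L^2(\triangle^p)$ and read off the coefficients by testing the variational problem \eqref{eq:problem-w} against each $u_k^p$. The $L^2$-normalized Dirichlet eigenfunctions form an orthonormal basis of $L^2(\triangle^p)$. The rescaled family $\{u_k^p/\sqrt{\lambda_k^p}\}$ is then an orthonormal basis of $H_0^1(\triangle^p)$ with respect to $(\nabla\cdot,\nabla\cdot)_{\triangle^p}$, since $(\nabla u_k^p,\nabla u_l^p)_{\triangle^p}=\lambda_k^p\delta_{kl}$ by \eqref{eq:eigenvalue-problem}. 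Hence every $w\in H_0^1(\triangle^p)$ satisfies $w=\sum_k (w,u_k^p)_{\triangle^p}u_k^p$, with convergence in $H_0^1(\triangle^p)$. The reason is that the $H_0^1$-Fourier coefficient of $w$ along $u_k^p/\sqrt{\lambda_k^p}$ is
\[
(\nabla w,\nabla u_k^p)_{\triangle^p}/\sqrt{\lambda_k^p}=\sqrt{\lambda_k^p}\,(w,u_k^p)_{\triangle^p},
\]
where the equality is \eqref{eq:eigenvalue-problem} applied with test function $w$. Applying this with $w=\dot u_i^p$ reduces the claim to identifying $c_k:=(\dot u_i^p,u_k^p)_{\triangle^p}$.

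First, the orthogonality condition in \eqref{eq:problem-w} gives $c_i=0$ immediately, so the $k=i$ term drops out. Next, take $v=u_k^p$ with $k\neq i$ in the first equation of \eqref{eq:problem-w}. The left side equals
\[
(\nabla\dot u_i^p,\nabla u_k^p)_{\triangle^p}-\lambda_i^p(\dot u_i^p,u_k^p)_{\triangle^p}=(\lambda_k^p-\lambda_i^p)c_k,
\]
again by \eqref{eq:eigenvalue-problem}, now for $u_k^p$ tested against $\dot u_i^p$. On the right side, $(u_i^p,u_k^p)_{\triangle^p}=0$, so the $\dot\lambda_i^p$ term vanishes and only $-(\dot P\nabla u_i^p,\nabla u_k^p)_{\triangle^p}$ remains. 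Because $\lambda_i^p$ is simple, $\lambda_k^p\neq\lambda_i^p$ for all $k\neq i$, and we may divide to obtain \eqref{eq:ck-formula-H1}.

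It remains to confirm that nothing else is needed. The material derivative $\dot u_i^p$ is assumed to exist as the unique solution of \eqref{eq:problem-w}. This is consistent with the Fredholm alternative: the right-hand side annihilates $u_i^p$, by Lemma~\ref{lem:first-order-formula} and $\|u_i^p\|_{\triangle^p}=1$, so the equation is solvable. Its membership in $H_0^1(\triangle^p)$ is all the basis expansion requires. The only mildly delicate point is the choice of topology: the expansion must converge in $H_0^1$ and not merely in $L^2$. This is handled by the $H_0^1$-orthonormal basis argument above, which via Parseval also gives $\sum_{k\ne i}\lambda_k^p c_k^2=\|\nabla\dot u_i^p\|^2_{\triangle^p}<\infty$. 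I do not expect a genuine obstacle; the main care is in justifying that $H_0^1$-basis property.
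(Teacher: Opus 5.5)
Your proposal is correct and follows essentially the same route as the paper: expand $\dot u_i^p$ in the eigenbasis, use the constraint $(\dot u_i^p,u_i^p)_{\triangle^p}=0$ to kill the $k=i$ term, and test \eqref{eq:problem-w} with $v=u_k^p$ to read off $c_k$, dividing by $\lambda_k^p-\lambda_i^p\neq0$ by simplicity. Your added justification that the $H_0^1$-expansion coefficients coincide with the $L^2$ coefficients $(\dot u_i^p,u_k^p)_{\triangle^p}$, and your solvability check via the Fredholm alternative, are details the paper takes for granted.
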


\begin{proof}
Since the family \(\{u_k^p\}_{k\ge1}\) is a complete orthogonal basis of
\(H^1_0(\triangle^p)\), for \(\dot u_i^p\in H^1_0(\triangle^p)\), there exist coefficients
\(c_k\in\mathbb R\) such that
\[
  \dot u_i^p=\sum_{k=1}^{\infty}c_k u_k^p
  \qquad\text{in }H^1_0(\triangle^p).
\]
The orthogonality condition $(\dot{u}_i^p,u^p_i)_{\triangle^p}=0$ and the $L^2$–orthonormality of $\{u^p_k\}_{k\ge1}$ imply $c_i = 0$, therefore, the expansion reduces to \eqref{eq:w-expansion-H1}.

Next, take $v = u^p_j$ with $j \neq i$ in the first equation of \eqref{eq:problem-w}. Using
\[
    (\nabla u^p_k, \nabla u^p_j)_{\triangle^p}
    = \lambda_k^p (u^p_k, u^p_j)_{\triangle^p}
    = \lambda_k^p \delta_{kj},
\]
we obtain
\[
    (\nabla \dot{u}_i^p, \nabla u^p_j)_{\triangle^p}
    - \lambda_i^p (\dot{u}_i^p, u^p_j)_{\triangle^p}
    = c_j (\lambda_j^p - \lambda_i^p).
\]
On the other hand, we have
\[
    -(\dot P \nabla u^p_i, \nabla u^p_j)_{\triangle^p}
    + \dot{\lambda}_i^p (u^p_i, u^p_j)_{\triangle^p}
    = -(\dot P \nabla u^p_i, \nabla u^p_j)_{\triangle^p}
\]
for $j \neq i$. Equating both sides yields
\[
    c_j (\lambda_j^p - \lambda_i^p)
    = -(\dot P \nabla u^p_i, \nabla u^p_j)_{\triangle^p}.
\]
Since $\lambda_i^p$ is simple, we have $\lambda_j^p \neq \lambda_i^p$ for $j \neq i$, and hence
\[
    c_j
    = -\,\frac{(\dot P \nabla u^p_i, \nabla u^p_j)_{\triangle^p}}
              {\lambda_j^p - \lambda_i^p},
\]
which is precisely \eqref{eq:ck-formula-H1}.
\end{proof}

Using the above expansion for $\dot u_i^p$, we obtain the following formula for the second-order directional derivative $\ddot\lambda^p_i$:
\begin{lemma}[]
\label{lem:second_derivative_spectral}
Suppose that $\lambda_i^p$ is simple.
Then, its second-order directional derivative in the direction $e$ is given by
\begin{equation}\label{eq:ddlami-expansion}
    \ddot\lambda^p_i
    = \bigl(\ddot P\nabla u_i^p,\nabla u_i^p\bigr)_{\triangle^p}
      + 2 \sum_{\substack{k=1\\ k \neq i}}^{\infty}
        \frac{\bigl|\ip{\dot{P}\nabla u_i^p}{\nabla u_k^p}_{\triangle^p}\bigr|^2}{\lambda_k^p - \lambda_i^p}.
\end{equation}
\end{lemma}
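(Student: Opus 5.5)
We substitute the expansion of Lemma~\ref{lem:w-expansion-convergence} into the second term of the formula of Lemma~\ref{lem:second-order-difference-quotient-formula}. That is, we want to evaluate
\[
-2\bigl(\dot P\nabla\dot u_i^p,\nabla u_i^p\bigr)_{\triangle^p}
\]
using $\dot u_i^p=\sum_{k\neq i}c_k u_k^p$ with the coefficients \eqref{eq:ck-formula-H1}.

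The first step is to justify exchanging the sum with the bilinear form. The map $w\mapsto(\dot P\nabla w,\nabla u_i^p)_{\triangle^p}$ is a bounded linear functional on $H^1_0(\triangle^p)$. Indeed, $\dot P$ is a constant matrix, so Cauchy--Schwarz gives the bound
\[
\bigl|(\dot P\nabla w,\nabla u_i^p)_{\triangle^p}\bigr|\le \|\dot P\|\,\|\nabla w\|_{\triangle^p}\|\nabla u_i^p\|_{\triangle^p}.
\]
Lemma~\ref{lem:w-expansion-convergence} asserts that the series converges in $H^1_0(\triangle^p)$. Continuity of the functional then lets us apply it termwise:
\[
\bigl(\dot P\nabla\dot u_i^p,\nabla u_i^p\bigr)_{\triangle^p}=\sum_{k\neq i}c_k\bigl(\dot P\nabla u_k^p,\nabla u_i^p\bigr)_{\triangle^p}.
\]

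The second step uses that $\dot P$ is symmetric. This gives
\[
(\dot P\nabla u_k^p,\nabla u_i^p)_{\triangle^p}=(\dot P\nabla u_i^p,\nabla u_k^p)_{\triangle^p}.
\]
Inserting the formula for $c_k$, each summand becomes
\[
-\frac{|(\dot P\nabla u_i^p,\nabla u_k^p)_{\triangle^p}|^2}{\lambda_k^p-\lambda_i^p}.
\]
Multiplying by $-2$ and adding the unchanged term $(\ddot P\nabla u_i^p,\nabla u_i^p)_{\triangle^p}$ yields \eqref{eq:ddlami-expansion}. The resulting series converges because it is the image of a convergent series under a bounded functional. Simplicity of $\lambda_i^p$ ensures that no denominator vanishes.

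The only delicate point is the interchange of sum and inner product. It is fully handled by the $H^1_0$-convergence already established in Lemma~\ref{lem:w-expansion-convergence}, so we do not expect any real obstacle.
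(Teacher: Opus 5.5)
Your proposal is correct and follows the paper's proof: you substitute the $H^1_0$-convergent expansion of $\dot u_i^p$ from Lemma~\ref{lem:w-expansion-convergence} into the second-order formula and insert the coefficients $c_k$. You also make explicit two points the paper uses without comment: continuity of $w\mapsto(\dot P\nabla w,\nabla u_i^p)_{\triangle^p}$ on $H^1_0(\triangle^p)$, which justifies evaluating the series termwise, and symmetry of $\dot P$, which lets the product of the two inner products be written as a square.
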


\begin{proof}
From Lemma \ref{lem:second-order-difference-quotient-formula}, we have
\begin{equation}\label{eq:deruv-formula}
    \ddot\lambda^p_i
    = \bigl(\ddot P\nabla u_i^p,\nabla u_i^p\bigr)_{\triangle^p}
      - 2\bigl(\dot P\nabla \dot u_i^p,\nabla u_i^p\bigr)_{\triangle^p}.
\end{equation}
Since $\lambda_i^p$ is simple, by Lemma \ref{lem:w-expansion-convergence}, $\dot u_i^p$ admits the expansion
\[
    \dot u_i^p = \sum_{\substack{k=1\\ k \neq i}}^{\infty} c_k u_k^p,
    \qquad
    c_k = -\frac{\ip{\dot{P}\nabla u_i^p}{\nabla u_k^p}_{\triangle^p}}{\lambda_k^p - \lambda_i^p}.
\]
Hence, we have
\begin{align*}
    -2\bigl(\dot P\nabla \dot u_i^p,\nabla u_i^p\bigr)_{\triangle^p}
    &= -2\left(\dot P \sum_{\substack{k=1\\ k \neq i}}^{\infty} c_k \nabla u_k^p,
                \nabla u_i^p\right)_{\triangle^p} \\
    &= -2 \sum_{\substack{k=1\\ k \neq i}}^{\infty}
        c_k \ip{\dot P \nabla u_k^p}{\nabla u_i^p}_{\triangle^p} \\
    &= 2 \sum_{\substack{k=1\\ k \neq i}}^{\infty}
        \frac{\ip{\dot{P}\nabla u_i^p}{\nabla u_k^p}_{\triangle^p}\ip{\dot P \nabla u_k^p}{\nabla u_i^p}_{\triangle^p}}
             {\lambda_k^p - \lambda_i^p} \\
    &= 2 \sum_{\substack{k=1\\ k \neq i}}^{\infty}
        \frac{\bigl|\ip{\dot{P}\nabla u_i^p}{\nabla u_k^p}_{\triangle^p}\bigr|^2}{\lambda_k^p - \lambda_i^p}.
\end{align*}
Substituting this into \eqref{eq:deruv-formula} yields the claim.
\end{proof}

For $N\in\mathbb{N}$ with $i\leq N$, introduce the truncated quantity of \eqref{eq:ddlami-expansion} in Lemma \ref{lem:second_derivative_spectral}:
\begin{equation}\label{eq:lam-i-N}
\ddot\lambda^p_{i,N}
    := \bigl(\ddot P\nabla u_i^p,\nabla u_i^p\bigr)_{\triangle^p}
       + 2 \sum_{\substack{k=1 \\ k \neq i}}^{N}
         \frac{\bigl|\ip{\dot{P}\nabla u_i^p}{\nabla u_k^p}_{\triangle^p}\bigr|^2}{\lambda_k^p - \lambda_i^p}.    
\end{equation}
Note that $\ddot\lambda^p_{i,N}$ becomes a lower bound of $\ddot\lambda^p_{i}$:
\begin{equation}\label{eq:theoretical-lower}
    \ddot\lambda^p_{i}\geq\ddot\lambda^p_{i,N}.
\end{equation}

In the following subsection, we derive a computable lower bound for $\ddot\lambda^p_{i}$.

\subsection{Finite element approximation of eigenpairs}

Let $\mathcal T^h$ be a regular triangulation of $\triangle^p$ with mesh size
$h$, i.e. $h$ is the maximum edge length in $\mathcal T^h$.
For each element $K\in\mathcal{T}^h$, denote by $P^1(K)$ the space of polynomials
of degree at most~$1$.

The associated Lagrange finite element space is
\[
L_h(\triangle^p)
:= \bigl\{\hat v\in C^0(\triangle^p)\;\big|\;\hat v|_K\in P^1(K)\text{ for all }K\in\mathcal{T}^h\bigr\},
\]
and we set
\[
V_h(\triangle^p):=H^1_0(\triangle^p)\cap L_h(\triangle^p).
\]
We also write \(V_h^{\CG}(\triangle^p):=V_h(\triangle^p)\).

To approximate the eigen-pairs, we consider the following
discrete eigenvalue problem: Find $\hat u^p\in V_h(\triangle^p)\setminus\{0\}$ and $\hat \lambda^p>0$ such that
\begin{equation}\label{eq:cg-problem}
  (\nabla \hat u^p,\nabla \hat v)_{\triangle^p}
  =\hat \lambda^p\, (\hat u^p,\hat v)_{\triangle^p}
  \qquad \forall \hat v\in V_h(\triangle^p).
\end{equation}

Let $N_{\dim}=\dim V_h(\triangle^p)$.  
The eigenvalues are ordered as
\[
0<\hat\lambda_1^p\le\hat\lambda_2^p\le\cdots\le\hat\lambda_{N_{\dim}}^p.
\]
Set \(\lambda_{k,h}^{\CG}(\triangle^p):=\hat\lambda_k^p\).

To obtain lower bounds of eigenvalues, introduce the Crouzeix--Raviart finite element space over \(\mathcal T^h\):
\begin{equation}
\label{def:fem-space-cr}
V_h^{\CR}(\triangle^p)
:=
\left\{v_h \in L^2(\triangle^p) :
\begin{aligned}
&v_h|_K \in P^1(K) \quad \forall K \in \mathcal T^h,\\
&v_h \text{ is continuous at the midpoints of internal edges},\\
&v_h \text{ vanishes at the midpoints of boundary edges}
\end{aligned}
\right\}.
\end{equation}
Let \(\lambda_{k,h}^{\CR}(\triangle^p)\) be the \(k\)-th eigenvalue of the following CR finite element eigenvalue problem: Find \(u_h\in V_h^{\CR}(\triangle^p)\setminus\{0\}\) and \(\lambda_h>0\) such that
\begin{equation}\label{eq:cr-problem}
  (\nabla_h u_h,\nabla_h v_h)_{\triangle^p}
  =\lambda_h (u_h,v_h)_{\triangle^p}
  \qquad \forall v_h\in V_h^{\CR}(\triangle^p),
\end{equation}
where \(\nabla_h\) denotes the elementwise gradient. Let \(N_{\CR}=\dim V_h^{\CR}(\triangle^p)\).



By using the conforming and CR finite element approximations, we obtain the following rigorous eigenvalue bounds.
\begin{lemma}[Theorem 2.1 of \cite{liu2015framework}]\label{lem:est-tau}
    Let \(C_h=0.1893h\), where \(h\) is the maximum edge length of \(\mathcal T^h\). Then, for
    \(k=1,2,\dots,\min\{N_{\dim},N_{\CR}\}\), the following two-sided estimate holds:
    \begin{equation}\label{lem:l-estimation-cr}
      \underline\lambda_k
      :=\frac{\lambda_{k,h}^{\CR}(\triangle^p)}{1+C_h^2\lambda_{k,h}^{\CR}(\triangle^p)}
      \;\leq\;
      \lambda_k(\triangle^p)
      \;\leq\;
      \lambda_{k,h}^{\CG}(\triangle^p)
      =:\overline\lambda_k.
    \end{equation}
\end{lemma}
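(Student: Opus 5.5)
Both bounds come from the min--max principle; the lower bound also needs the Crouzeix--Raviart interpolation operator. For the upper bound, note that $V_h^{\CG}(\triangle^p)\subset H^1_0(\triangle^p)$ is a conforming subspace. The discrete eigenvalue $\lambda_{k,h}^{\CG}$ is the min--max of the Rayleigh quotient over $k$-dimensional subspaces of $V_h^{\CG}$, while $\lambda_k(\triangle^p)$ is the same min--max over the larger family of $k$-dimensional subspaces of $H^1_0$. Hence $\lambda_k\le\lambda_{k,h}^{\CG}$ immediately.

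For the lower bound, I would introduce the CR interpolation $\Pi_h:H^1_0(\triangle^p)\to V_h^{\CR}(\triangle^p)$, defined by preserving edge averages, $\int_E \Pi_h u=\int_E u$ on every edge $E$. I would use two standard properties.
\begin{enumerate}[label=(\alph*)]
\item \emph{Galerkin orthogonality.} By elementwise integration by parts, $(\nabla u-\nabla_h\Pi_h u,\nabla_h v_h)_{\triangle^p}=0$ for all $v_h\in V_h^{\CR}$. In particular,
\[
\|\nabla u\|^2=\|\nabla_h\Pi_h u\|^2+\|\nabla u-\nabla_h\Pi_h u\|^2 .
\]
\item \emph{Interpolation error.} $\|u-\Pi_h u\|\le C_h\|\nabla u-\nabla_h\Pi_h u\|$, with $C_h=0.1893h$. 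This comes from the elementwise constant for the CR interpolation on a triangle with zero edge means (Liu--Kikuchi type estimate), summed over elements.
\end{enumerate}
I expect obtaining the explicit constant $0.1893$ in (b) to be the main obstacle if proved from scratch. It requires a sharp, verified bound on the best constant of a Poincaré-type inequality for functions with vanishing edge averages, uniformly over triangle shapes. I would treat this as a known input rather than reprove it.

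Now fix any $k$-dimensional $E\subset H^1_0(\triangle^p)$; we must find $u\in E$ with $\|\nabla u\|^2/\|u\|^2\ge \lambda_{k,h}^{\CR}/(1+C_h^2\lambda_{k,h}^{\CR})$. There are two cases.
\begin{itemize}
\item If $\Pi_h$ is not injective on $E$, pick $0\ne u\in E$ with $\Pi_h u=0$. Then (b) gives $\|u\|\le C_h\|\nabla u\|$, so the quotient is at least $C_h^{-2}$, which exceeds the target value.
\item Otherwise $\Pi_hE$ is $k$-dimensional in $V_h^{\CR}$. By the discrete min--max there is $u\in E$ with $\|\nabla_h\Pi_h u\|^2\ge\lambda_{k,h}^{\CR}\|\Pi_h u\|^2$. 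Write $a=\|\nabla_h\Pi_hu\|$ and $b=\|\nabla u-\nabla_h\Pi_h u\|$. Using (b) and Cauchy--Schwarz,
\[
\|u\|\le\|\Pi_hu\|+\|u-\Pi_hu\|\le \frac{a}{\sqrt{\lambda_{k,h}^{\CR}}}+C_h b\le\Bigl(\frac1{\lambda_{k,h}^{\CR}}+C_h^2\Bigr)^{1/2}(a^2+b^2)^{1/2}.
\]
By (a), $a^2+b^2=\|\nabla u\|^2$, so $\|\nabla u\|^2/\|u\|^2\ge \lambda_{k,h}^{\CR}/(1+C_h^2\lambda_{k,h}^{\CR})$.
\end{itemize}
Taking the maximum over $u\in E$ and then the minimum over $E$ yields $\underline\lambda_k\le\lambda_k(\triangle^p)$. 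The range restriction $k\le\min\{N_{\dim},N_{\CR}\}$ simply ensures both discrete eigenvalues exist.
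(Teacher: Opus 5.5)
Your proposal is correct. The conforming min--max argument gives the upper bound, and your two-case lower-bound argument is essentially the projection-based proof of Theorem 2.1 in Liu's framework paper: either $\Pi_h$ fails to be injective on $E$, or you apply Cauchy--Schwarz together with the Pythagorean identity that follows from edge-mean orthogonality. Like that proof, you take the constant $0.1893$ for the Crouzeix--Raviart interpolation error as a known input. The paper itself gives no proof and only cites that theorem, so your route is the one it implicitly relies on.
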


Although Lemma \ref{lem:est-tau} provides rigorous lower bounds for eigenvalues, it is not well suited to exploiting high-order finite element approximations. In practice, improving the accuracy of the resulting lower bounds then relies mainly on mesh refinement.

To compute highly accurate eigenvalue bounds more efficiently, we employ the approach of \cite[Chap.~5]{liu2024lehmann}, which formulates the Lehmann--Goerisch method \cite{goerisch1990determination,behnke1994inclusions,lehmann1963optimale} in a finite element setting. We emphasize that this method also depends on projection-based eigenvalue bounds \cite{liu2015framework}, as in Lemma \ref{lem:est-tau}.%
The version of the Lehmann--Goerisch theorem used in the present paper is summarized in Lemma \ref{thm:Lehmann--Goerisch}; for the full argument, see \cite[\S 5.2.1]{liu2024lehmann}.

\begin{lemma}\label{thm:Lehmann--Goerisch}
Let $\hat{u}_i \in V_h^{\CG}$ $(i=1,\ldots,n)$ be approximate eigenfunctions, and let $\hat w_i \in H(\operatorname{div},\triangle)$ satisfy
\begin{equation}
\label{eq:w_term_in_LH_method}
(\hat w_i,\nabla v) = (\hat u_i,v)\quad \forall v \in H^1_0(\triangle).
\end{equation}
Let $\lambda_{n,h}$ denote the $n$th eigenvalue obtained from the variational eigenvalue problem in $V_h^{\CG}$. 
Assume that $\rho$ is a lower bound for the $(n+1)$th exact eigenvalue, namely $\rho \le \lambda_{n+1}$, and that $\rho > \lambda_{n,h}$.
Define the $n\times n$ matrices
\begin{equation}
A_0 := \bigl((\nabla \hat u_i,\nabla \hat u_j)_{T^0}\bigr)_{i,j=1}^n,\quad
A_1 := \bigl((\hat u_i,\hat u_j)_{T^0}\bigr)_{i,j=1}^n,\quad
A_2 := \bigl((\hat w_i,\hat w_j)_{T^0}\bigr)_{i,j=1}^n,
\end{equation}
and
\begin{equation}
A := A_0-\rho A_1,\qquad
B := A_0-2\rho A_1+\rho^2 A_2.
\end{equation}
Suppose further that $B$ is positive definite.
Let $\tau_1\le \tau_2\le \cdots \le \tau_n$ be the eigenvalues of the generalized eigenvalue problem
\[
Az=\mu Bz.
\]
Then the following lower bounds hold:
\begin{equation}
\lambda_k \ge \rho-\frac{\rho}{1-\tau_{n+1-k}}\qquad (1\le k\le n).
\end{equation}
\end{lemma}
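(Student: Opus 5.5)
The statement is the Lehmann--Goerisch bound, which the paper takes from \cite[\S 5.2.1]{liu2024lehmann}. Here is how I would reconstruct it. The idea is to reduce to the classical Lehmann--Goerisch inequality for the operator $T=(-\Delta)^{-1}$, viewed as a compact self-adjoint operator on $H^1_0(\triangle)$ with inner product $(\nabla\cdot,\nabla\cdot)$.

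\textbf{Step 1 (the three matrices as forms of $T$).} Note that $(\nabla Tu,\nabla v)=(u,v)$, so
\[
(\nabla T\hat u_i,\nabla T\hat u_j)=(\hat u_i,T\hat u_j).
\]
Thus $A_0$ is the Gram matrix of the trial vectors $\hat u_i$ in the $H^1_0$ product, and $A_1$ is the matrix of $T$ on them. The term $(T\hat u_i,T\hat u_j)_{H^1_0}=(\nabla T\hat u_i,\nabla T\hat u_j)$ is not computable. The condition \eqref{eq:w_term_in_LH_method} says that $\hat w_i-\nabla T\hat u_i$ is $L^2$-orthogonal to $\nabla H^1_0$. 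By Pythagoras this gives
\[
c^{\top}A_2c=\Bigl\|\sum c_i\hat w_i\Bigr\|^2\ge \Bigl\|\nabla T\sum c_i\hat u_i\Bigr\|^2,
\]
so $A_2$ dominates the exact matrix of $T^2$. This is the Goerisch step: the dual (flux) variable provides an upper replacement for the uncomputable term.

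\textbf{Step 2 (Temple--Lehmann form).} For $v$ in the span of the $\hat u_i$, set $\sigma=1/\rho$. Expand
\[
\|(T-\sigma)v\|^2_{H^1_0}=\sigma^2\bigl(c^{\top}A_0c-2\rho\,c^{\top}A_1c+\rho^2 c^{\top}(T^2\text{-matrix})c\bigr).
\]
The quantity in parentheses is at most $c^{\top}Bc$. Apply Lehmann's theorem to $T$ with pole $\sigma$: the eigenvalues of $T$ above $\sigma$ are $1/\lambda_k$ for $k\le n$, since $\rho\le\lambda_{n+1}$. The theorem then states that the eigenvalues of the pencil
\[
\bigl((T-\sigma)v,v\bigr)\,z=\nu\,\|(T-\sigma)v\|^2\,z
\]
give bounds $1/\lambda_k\le\sigma+1/\nu$ on the positive side. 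Replacing the exact $T^2$-matrix by the larger $A_2$ only enlarges the right-hand form. By monotonicity of generalized eigenvalues, the bounds stay valid. Here the hypothesis that $B$ is positive definite is needed to make the pencil well defined.

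\textbf{Step 3 (translate to $\mu$).} Scaling by $\rho$, the form $((T-\sigma)v,v)$ equals $-\sigma\,c^{\top}Ac$ and the denominator is $\sigma^2 c^{\top}Bc$. Hence $\nu=-\rho/\mu$, with $\mu$ running over the eigenvalues of $Az=\mu Bz$. The hypothesis $\rho>\lambda_{n,h}$ together with Rayleigh--Ritz makes $A$ negative definite. Therefore all $\tau_j<0$, and the ordering of the $\nu$ reverses, which produces the index $n+1-k$. Inverting the resulting bound on $1/\lambda_k$ gives
\[
\lambda_k\ge\rho-\frac{\rho}{1-\tau_{n+1-k}}.
\]

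\textbf{Main obstacle.} The main difficulty is the bookkeeping of signs and indices when passing from Lehmann's bound for $T$ to bounds for $\lambda_k$, in particular checking that $\tau_{n+1-k}$ rather than $\tau_k$ appears. Step 1 is where the $H(\operatorname{div})$ hypothesis enters, and it needs care as well.
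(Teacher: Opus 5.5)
Your overall route is correct. It is the standard derivation behind the result, which the paper does not prove but cites from \cite[\S 5.2.1]{liu2024lehmann}. You apply Lehmann's theorem to $T=(-\Delta)^{-1}$ on $H^1_0(\triangle)$ with pole $\sigma=1/\rho$. You combine it with Goerisch's observation that $\hat w_i-\nabla T\hat u_i$ is $L^2$-orthogonal to $\nabla H^1_0(\triangle)$, so $A_2$ dominates the uncomputable matrix $\bigl((\nabla T\hat u_i,\nabla T\hat u_j)\bigr)_{i,j}$. Steps 1 and 2 are correct.

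Step 3 goes wrong in exactly the bookkeeping you flagged. Write $\widetilde B$ for $B$ with the exact $T^2$-matrix in place of $A_2$. Your own two identities turn the Lehmann pencil into $-\sigma Az=\nu\sigma^2\widetilde Bz$, i.e.\ $Az=-(\nu/\rho)\widetilde Bz$. Hence $\mu=-\nu/\rho$ and $\nu=-\rho\mu$, not $\nu=-\rho/\mu$. This is not cosmetic:
\begin{itemize}
\item With $\nu=-\rho/\mu$, $\nu$ is increasing in $\mu$ on $(-\infty,0)$, so the ordering would not reverse.
\item Inverting $1/\lambda\le\sigma+1/\nu$ would then give $\lambda\ge\rho/(1-\tau)$ instead of $\rho-\rho/(1-\tau)$.
\end{itemize}
That bound is invalid. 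In the abstract setting of your Steps 1--3, take $n=1$, $\hat u_1=u_1$ and $\hat w_1=\nabla u_1/\lambda_1$. Then $\tau_1=\lambda_1/(\lambda_1-\rho)$, so the correct formula returns exactly $\lambda_1$. Your formula instead gives $\rho/(1-\tau_1)=\rho-\lambda_1$, which exceeds $\lambda_1$, e.g.\ for the equilateral triangle with $\rho=\lambda_2=\tfrac73\lambda_1$.

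With the correct relation the argument closes. Since $A$ is negative definite, $\nu_j:=-\rho\tau_j>0$, and $\nu_1\ge\cdots\ge\nu_n$ corresponds to $\tau_1\le\cdots\le\tau_n$. The eigenvalues of $T$ above $\sigma$, counted outward from $\sigma$, are $1/\lambda_n\le 1/\lambda_{n-1}\le\cdots\le 1/\lambda_1$. Lehmann's theorem therefore gives $1/\lambda_{n+1-j}\le\frac1\rho\bigl(1-\frac1{\tau_j}\bigr)$, i.e.\ $\lambda_{n+1-j}\ge\frac{\rho\tau_j}{\tau_j-1}=\rho-\frac{\rho}{1-\tau_j}$. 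This is the claim with $k=n+1-j$. Passing from $\widetilde B$ to $B\succeq\widetilde B$ only moves each $\tau_j$ toward $0$, so it only weakens these bounds.

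Three smaller points should be made explicit.
\begin{itemize}
\item To know that exactly $1/\lambda_1,\dots,1/\lambda_n$ lie above $\sigma$, you need $\lambda_n<\rho$ as well as $\rho\le\lambda_{n+1}$. This follows from $\lambda_n\le\lambda_{n,h}<\rho$.
\item Negative definiteness of $A$ from $\rho>\lambda_{n,h}$ uses that $\hat u_1,\dots,\hat u_n$ are the first $n$ discrete eigenfunctions, so the Rayleigh quotient on their span is at most $\lambda_{n,h}$. For arbitrary approximations this must be assumed or checked.
\item If $\rho=\lambda_{n+1}$ exactly, $\sigma$ is an eigenvalue of $T$. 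Applying Lehmann's theorem in resolvent form then requires a limit $\rho'\uparrow\rho$.
\end{itemize}
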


For the construction of $\widehat w_i$, a practical choice is to approximate it in the Raviart--Thomas finite element space $\mathrm{RT}_h$, which serves as a discrete counterpart of $H(\operatorname{div})$. The Raviart--Thomas space of degree $d$ is defined by
\begin{equation}
\mathrm{RT}_h^d
:=
\left\{
p_h\in H(\operatorname{div})
\;\middle|\;
p_h=(a_K,b_K)+c_K\mathbf{x},\ 
a_K,b_K,c_K\in P^d(K)
\text{ for each } K\in\mathcal{T}^h
\right\}.
\end{equation}
We also introduce the discontinuous polynomial space
\begin{equation}
X_h^d
:=
\left\{
v_h
\;\middle|\;
v_h\in P^d(K)
\text{ for each } K\in\mathcal{T}^h
\right\},
\end{equation}
which is used in the computation of $\widehat w_i$.

Since the solution of \eqref{eq:w_term_in_LH_method} is not unique, we determine an optimal choice of $\widehat w_i$ through the following mixed formulation: find $(\hat p_i,\hat g_i)\in \mathrm{RT}_h^d\times X_h^d$ such that
\begin{equation}
\begin{cases}
(\hat p_i,\hat q)+(\hat g_i,\operatorname{div}\hat q)=0
\quad \forall \hat q\in \mathrm{RT}_h^d,\\
(\operatorname{div}\hat p_i,\hat f)+(\hat u_i,\hat f)=0
\quad \forall \hat f\in X_h^d.
\end{cases}
\end{equation}
In our computation of high-precision eigenvalue bounds, we choose $d=2$ or $3$.

To obtain the concrete lower bound of \eqref{eq:lam-i-N}, define the corresponding computable $N$-term approximation by
\begin{equation}
    \widehat{(\ddot\lambda^p_{i,N})}
    := \bigl(\ddot P\nabla \hat u_i^p,\nabla \hat u_i^p\bigr)_{\triangle^p}
       + 2 \sum_{\substack{k=1 \\ k \neq i}}^{N}
         \frac{\bigl|\ip{\dot{P}\nabla \hat u_i^p}{\nabla \hat u_k^p}_{\triangle^p}\bigr|^2}{\lambda_k^p - \lambda_i^p}.
\end{equation}
Also, let $\mathrm{Est}_{a}(u_k^p ,\hat{u}_k^p)$ $(k=1,\cdots,N)$ be the quantity satisfying
\begin{equation}\label{eq:def-of-estimators}
    \|\nabla (u_k^p - \hat{u}_k^p)\|_{\triangle^p} \le \mathrm{Est}_{a}(u_k^p ,\hat{u}_k^p).
\end{equation}
Note that, such a constant $\mathrm{Est}_{a}(u_k^p ,\hat{u}_k^p)$ can be obtained using Lemma \ref{lem:eigenvec-estimation-algorithm-1} if $\lambda_k$ is well separated from $\lambda_{k-1}$ and $\lambda_{k+1}$.

\medskip

As a preparation, we first establish an a priori error bound for the discrete first-order shape derivative, defined by
\begin{equation*}
\widehat{(\dot\lambda_i^p)}:=
\left(\dot P\nabla \hat{u}_i^p,\nabla \hat{u}_i^p\right)_{\triangle^p}.
\end{equation*}

\begin{lemma}[A priori error estimate for the discrete first-order shape derivative]\label{lem:first-order-error-estimate}
We have
\[
  \bigl|\dot\lambda_i^p-\widehat{(\dot\lambda_i^p)}\bigr|
  \;\le\;
  \|\dot P\|_{2}\,
  \Bigl(\sqrt{\lambda_i^p}+\sqrt{\hat\lambda_i^p}\Bigr)\,
  \mathrm{Est}_a(u_i^p,\hat{u}_i^p),
\]
where $\|\dot P\|_{2}$ denotes the spectral norm of the matrix~$\dot P$.
\end{lemma}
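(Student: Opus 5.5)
The plan is to write the difference of the two bilinear forms as a sum of two cross terms, using the symmetry of $\dot P$, and then bound each term by Cauchy--Schwarz. Set $u:=u_i^p$, $\hat u:=\hat u_i^p$ and $\delta:=u-\hat u$. Since $\dot P$ is a constant symmetric matrix, we have $(\dot P\nabla u,\nabla u)-(\dot P\nabla\hat u,\nabla\hat u)=(\dot P\nabla u,\nabla u)-(\dot P\nabla \hat u,\nabla u)+(\dot P\nabla \hat u,\nabla u)-(\dot P\nabla\hat u,\nabla\hat u)$. Using symmetry in the middle pair, this becomes
\[
\dot\lambda_i^p-\widehat{(\dot\lambda_i^p)}
=\bigl(\dot P\nabla\delta,\nabla u\bigr)_{\triangle^p}+\bigl(\dot P\nabla\delta,\nabla\hat u\bigr)_{\triangle^p}.
\]

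Next we bound each term pointwise and then integrate. Pointwise, $|\dot P\nabla\delta\cdot\nabla u|\le\|\dot P\|_2|\nabla\delta||\nabla u|$, so Cauchy--Schwarz in $L^2(\triangle^p)$ gives
\[
\bigl|\dot\lambda_i^p-\widehat{(\dot\lambda_i^p)}\bigr|
\le\|\dot P\|_2\,\|\nabla\delta\|_{\triangle^p}\bigl(\|\nabla u\|_{\triangle^p}+\|\nabla\hat u\|_{\triangle^p}\bigr).
\]

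It then remains to identify the gradient norms. Since $u$ is $L^2$-normalized, taking $v=u$ in \eqref{eq:eigenvalue-problem} gives $\|\nabla u\|^2=\lambda_i^p$. Similarly, if $\hat u$ is the $L^2$-normalized discrete eigenfunction from \eqref{eq:cg-problem}, taking $\hat v=\hat u$ gives $\|\nabla\hat u\|^2=\hat\lambda_i^p$. Finally, we replace $\|\nabla\delta\|$ by $\mathrm{Est}_a(u_i^p,\hat u_i^p)$ using \eqref{eq:def-of-estimators}, which yields the claimed bound.

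The only point needing care is the normalization and sign conventions for $\hat u$. We must assume that $\hat u$ is $L^2$-normalized and that its sign is aligned with $u$, which is implicit in the definition of $\mathrm{Est}_a$. No step here is a real obstacle; the estimate is a direct Cauchy--Schwarz argument.
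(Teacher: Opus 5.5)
Your proof is correct and follows essentially the same route as the paper: add and subtract the mixed term $(\dot P\nabla\hat u_i^p,\nabla u_i^p)_{\triangle^p}$, bound both cross terms by Cauchy--Schwarz with $\|\dot P\|_2$, and use the $L^2$-normalization to get $\|\nabla u_i^p\|_{\triangle^p}^2=\lambda_i^p$ and $\|\nabla\hat u_i^p\|_{\triangle^p}^2=\hat\lambda_i^p$. Invoking the symmetry of $\dot P$ is harmless but unnecessary, since the paper bounds the second cross term directly in the form $(\dot P\nabla\hat u_i^p,\nabla u_i^p-\nabla\hat u_i^p)_{\triangle^p}$.
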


\begin{proof}
We write
\[
  \dot\lambda_i^p-\widehat{(\dot\lambda_i^p)}
  =\bigl(\dot P\nabla u_i^p,\nabla u_i^p\bigr)_{\triangle^p}
   -\bigl(\dot P\nabla\hat u_i^p,\nabla\hat u_i^p\bigr)_{\triangle^p}.
\]
Adding and subtracting the mixed term gives
\[
\begin{aligned}
  \dot\lambda_i^p-\widehat{(\dot\lambda_i^p)}
  &=
     \bigl(\dot P(\nabla u_i^p-\nabla\hat u_i^p),\nabla u_i^p\bigr)_{\triangle^p}
     +\bigl(\dot P\nabla\hat u_i^p,\nabla u_i^p-\nabla\hat u_i^p\bigr)_{\triangle^p}.
\end{aligned}
\]
By the Cauchy--Schwarz inequality, we have
\[
\begin{aligned}
  \bigl|\dot\lambda_i^p-\widehat{(\dot\lambda_i^p)}\bigr|
  &\le
     \|\dot P\|_{2}\,
     \|\nabla u_i^p-\nabla\hat u_i^p\|_{\triangle^p}\,
     \Bigl(\|\nabla u_i^p\|_{\triangle^p}
           +\|\nabla\hat u_i^p\|_{\triangle^p}\Bigr).
\end{aligned}
\]
Since the eigenfunctions are $L^{2}(\triangle^p)$-normalised, we have
$\|\nabla u_i^p\|_{\triangle^p}^{2}=\lambda_i^p$ and
$\|\nabla\hat u_i^p\|_{\triangle^p}^{2}=\hat\lambda_i^p$.
Substituting these identities yields the stated estimate.
\end{proof}

\medskip

For \eqref{eq:lam-i-N}, we obtain the following computable lower bound.
\begin{theorem}
\label{thm:main-theorem-est}
Suppose $i\leq N$ and  that $\lambda_i^p$ is simple.
Then, the second-order directional derivative $\ddot\lambda^p_{i}$ satisfies
\begin{align}\label{eq:main-theorem-est}
    \ddot\lambda^p_{i}
    \ge \widehat{(\ddot\lambda_{i,N}^p)}
           -\left\|\ddot P\right\|_{2}
            \bigl(\sqrt{\lambda_i^p}+\sqrt{\hat\lambda_i^p}\bigr)~
            \mathrm{Est}_{a}(u_i^p,\hat{u}_i^p)- 2 \sum_{\substack{k=1 \\ k \neq i}}^{N}
            \frac{\mathcal{A}_{ik}}{|\lambda_k^p - \lambda_i^p|}
            \qquad (=:~\underline{\ddot\lambda}^p_{i}), 
\end{align}
where
\begin{align*}
    \mathcal{A}_{ik}
    &:= \|\dot P\|_{2}^2
        \Bigl(\mathrm{Est}_{a}(u_i^p,\hat{u}_i^p)\sqrt{\hat{\lambda}_k^p}
              + \sqrt{\lambda_i^p}~\mathrm{Est}_{a}(u_k^p,\hat{u}_k^p)\Bigr)
        \Bigl(\sqrt{\lambda_i^p\lambda_k^p}
              + \sqrt{\hat\lambda_i^p\hat\lambda_k^p}\Bigr).
\end{align*}
\end{theorem}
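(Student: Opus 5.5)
The plan is to start from the truncation inequality \eqref{eq:theoretical-lower}, $\ddot\lambda^p_i\ge\ddot\lambda^p_{i,N}$. This holds because, by Lemma~\ref{lem:second_derivative_spectral}, the discarded terms with $k>N$ all have positive denominators: $\lambda_k^p>\lambda_i^p$ since $i\le N<k$ and $\lambda_i^p$ is simple. It then suffices to bound $|\ddot\lambda^p_{i,N}-\widehat{(\ddot\lambda^p_{i,N})}|$ by the two error terms in \eqref{eq:main-theorem-est}. I would split this difference into the $\ddot P$ term and the $N-1$ summands, and treat each as in the proof of Lemma~\ref{lem:first-order-error-estimate}.

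\textbf{The $\ddot P$ term.} Add and subtract the mixed term:
\[
(\ddot P\nabla u_i^p,\nabla u_i^p)-(\ddot P\nabla\hat u_i^p,\nabla\hat u_i^p)=(\ddot P\nabla(u_i^p-\hat u_i^p),\nabla u_i^p)+(\ddot P\nabla\hat u_i^p,\nabla(u_i^p-\hat u_i^p)).
\]
Cauchy--Schwarz, together with $\|\nabla u_i^p\|=\sqrt{\lambda_i^p}$ and $\|\nabla\hat u_i^p\|=\sqrt{\hat\lambda_i^p}$, gives exactly $\|\ddot P\|_2(\sqrt{\lambda_i^p}+\sqrt{\hat\lambda_i^p})\,\mathrm{Est}_a(u_i^p,\hat u_i^p)$. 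Here the signs of $\hat u_k^p$ are assumed aligned with $u_k^p$; this is the convention implicit in \eqref{eq:def-of-estimators}.

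\textbf{The summands.} Write $\alpha_k=(\dot P\nabla u_i^p,\nabla u_k^p)$ and $\hat\alpha_k=(\dot P\nabla\hat u_i^p,\nabla\hat u_k^p)$. Since each summand carries the exact denominator $\lambda_k^p-\lambda_i^p$ in both $\ddot\lambda^p_{i,N}$ and its hatted version, the $k$-th summands differ by
\[
\frac{2\,\bigl||\alpha_k|^2-|\hat\alpha_k|^2\bigr|}{|\lambda_k^p-\lambda_i^p|}.
\]
Factor $|\alpha_k|^2-|\hat\alpha_k|^2=(\alpha_k-\hat\alpha_k)(\alpha_k+\hat\alpha_k)$. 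For the sum factor, Cauchy--Schwarz gives
\[
|\alpha_k+\hat\alpha_k|\le\|\dot P\|_2\bigl(\sqrt{\lambda_i^p\lambda_k^p}+\sqrt{\hat\lambda_i^p\hat\lambda_k^p}\bigr).
\]
For the difference factor, decompose
\[
\alpha_k-\hat\alpha_k=(\dot P\nabla(u_i^p-\hat u_i^p),\nabla\hat u_k^p)+(\dot P\nabla u_i^p,\nabla(u_k^p-\hat u_k^p)),
\]
which gives $|\alpha_k-\hat\alpha_k|\le\|\dot P\|_2\bigl(\mathrm{Est}_a(u_i^p,\hat u_i^p)\sqrt{\hat\lambda_k^p}+\sqrt{\lambda_i^p}\,\mathrm{Est}_a(u_k^p,\hat u_k^p)\bigr)$. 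The product of the two factor bounds is precisely $\mathcal A_{ik}$.

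\textbf{Assembly.} Summing over $k\neq i$, $k\le N$, and combining with the $\ddot P$ estimate gives
\[
\ddot\lambda^p_{i,N}\ge\widehat{(\ddot\lambda^p_{i,N})}-\text{(errors)},
\]
and \eqref{eq:theoretical-lower} then finishes the proof. The only delicate point is the sign and normalization convention for the approximate eigenfunctions, which must be consistent with the estimator $\mathrm{Est}_a$. Note also that the summands with $k<i$ are negative, but the bound is applied to absolute differences, so no positivity of individual summands is needed.
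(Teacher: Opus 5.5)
Your proposal is correct and follows the paper's proof essentially step for step. It uses the truncation bound \eqref{eq:theoretical-lower}, the add-and-subtract estimate for the $\ddot P$ term, and the difference-of-squares factorization of each summand with the same mixed term $(\dot P\nabla u_i^p,\nabla\hat u_k^p)$ and the same Cauchy--Schwarz bounds whose product is $\mathcal{A}_{ik}$; your explicit check that the discarded tail is nonnegative (since $\lambda_k^p>\lambda_i^p$ for $k>N\ge i$ by simplicity) is a detail the paper leaves implicit.
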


\begin{proof}
Let us first estimate the difference between $\ddot\lambda^p_{i,N}$ and $\widehat{(\ddot\lambda_{i,N}^p)}$.
By the definitions of $\ddot\lambda^p_{i,N}$ and $\widehat{(\ddot\lambda_{i,N}^p)}$,
\begin{align}
\label{eq:main-est-error}
    &|\ddot\lambda^p_{i,N}-\widehat{(\ddot\lambda_{i,N}^p)}|\nonumber\\
    &=
    \left|
      (\ddot P\nabla u_i^p,\nabla u_i^p)_{\triangle^p}
      -(\ddot P\nabla \hat{u}_i^p,\nabla \hat{u}_i^p)_{\triangle^p} 
      + 2\sum_{\substack{k=1 \\ k \neq i}}^{N}
        \frac{\bigl|\ip{\dot{P}\nabla u_i^p}{\nabla u_k^p}_{\triangle^p}\bigr|^2
             -\bigl|\ip{\dot{P}\nabla\hat  u_i^p}{\nabla \hat u_k^p}_{\triangle^p}\bigr|^2}
             {\lambda_k^p - \lambda_i^p}
    \right|\nonumber \\
    &\le
    \|\ddot P\|_{2}
      \bigl(\sqrt{\lambda_i^p}+\sqrt{\hat\lambda_i^p}\bigr)
      ~\mathrm{Est}_{a}(u_i^p,\hat{u}_i^p)\nonumber \\
    &~
      + 2\sum_{\substack{k=1 \\ k \neq i}}^{N}
        \frac{1}{|\lambda_k^p - \lambda_i^p|}
        \bigl|
          \ip{\dot{P}\nabla \hat{u}_i^p}{\nabla \hat{u}_k^p}_{\triangle^p}
          -\ip{\dot{P}\nabla u_i^p}{\nabla u_k^p}_{\triangle^p}
        \bigr|
        \bigl|
          \ip{\dot{P}\nabla \hat{u}_i^p}{\nabla \hat{u}_k^p}_{\triangle^p}
          +\ip{\dot{P}\nabla u_i^p}{\nabla u_k^p}_{\triangle^p}
        \bigr|.
\end{align}
For $k=1,\dots,N$, we have
\begin{align}
\label{eq:main-est-each}
    \bigl|
      \ip{\dot{P}\nabla \hat{u}_i^p}{\nabla \hat{u}_k^p}_{\triangle^p}
      -\ip{\dot{P}\nabla u_i^p}{\nabla u_k^p}_{\triangle^p}
    \bigr|\nonumber
    &\le
    \bigl|
      \ip{\dot{P}\nabla \hat{u}_i^p}{\nabla \hat{u}_k^p}_{\triangle^p}
      -\ip{\dot{P}\nabla u_i^p}{\nabla \hat{u}_k^p}_{\triangle^p}
    \bigr|\nonumber\\
    &~~~+
    \bigl|
      \ip{\dot{P}\nabla u_i^p}{\nabla \hat{u}_k^p}_{\triangle^p}
      -\ip{\dot{P}\nabla u_i^p}{\nabla u_k^p}_{\triangle^p}
    \bigr|\nonumber \\
    &\le
    \|\dot{P}\|_2
    \Bigl(
      \mathrm{Est}_{a}(u_i^p,\hat{u}_i^p)\sqrt{\hat{\lambda}_k^p}
      + \sqrt{\lambda_i^p}~\mathrm{Est}_{a}(u_k^p,\hat{u}_k^p)
    \Bigr).
\end{align}
Also, it follows that
\begin{align}
\begin{split}\label{eq:main-est-each-2}
    \bigl|
      \ip{\dot{P}\nabla \hat{u}_i^p}{\nabla \hat{u}_k^p}
      +\ip{\dot{P}\nabla u_i^p}{\nabla u_k^p}
    \bigr|
    &\le
    \|\dot P\|_{2}
    \left(\sqrt{\hat\lambda_i^p\hat\lambda_k^p}
          +\sqrt{\lambda_i^p\lambda_k^p}\right).
\end{split}
\end{align}
Combining \eqref{eq:main-est-error}, \eqref{eq:main-est-each} and \eqref{eq:main-est-each-2}, we obtain
\begin{equation}\label{eq:almost-final-est}
    |\ddot\lambda^p_{i,N}-\widehat{(\ddot\lambda_{i,N}^p)}|
    \le
    \|\ddot P\|_{2}
      \bigl(\sqrt{\lambda_i^p}+\sqrt{\hat\lambda_i^p}\bigr)
      \mathrm{Est}_{a}(u_i^p,\hat{u}_i^p)
      + 2 \sum_{\substack{k=1 \\ k \neq i}}^{N}
        \frac{\mathcal{A}_{ik}}{|\lambda_k^p - \lambda_i^p|}.
\end{equation}
Finally, from \eqref{eq:almost-final-est} and \eqref{eq:theoretical-lower}, it follows that
\[
    \ddot\lambda^p_{i}
    \ge \widehat{(\ddot\lambda_{i,N}^p)}
        -\|\ddot P\|_{2}
         \bigl(\sqrt{\lambda_i^p}+\sqrt{\hat\lambda_i^p}\bigr)
         \mathrm{Est}_{a}(u_i^p,\hat{u}_i^p)
        - 2 \sum_{\substack{k=1 \\ k \neq i}}^{N}
          \frac{\mathcal{A}_{ik}}{|\lambda_k^p - \lambda_i^p|}.
\]
\end{proof}

\section{Computer-assisted proof of Theorem \ref{thm:main-problem}}\label{Sec:Proof-of-Lower}

Recall, to prove Conjectures \ref{Conj:1}, \ref{Conj:2} and \ref{Conj:3}, it suffices to establish the following Theorem \ref{thm:main-problem}. We outline the strategy of the computer-assisted proof of
Theorem~\ref{thm:main-problem}.

Let $\triangle^p$ denote the triangle with vertices $(0,0)$, $(1,0)$, and
$p=(x,y)$. We normalize triangles by requiring that
\[
\Omega:=\bigl\{(x,y)\in\mathbb{R}^2: x^2+y^2\le 1,\ x\ge\tfrac12,\ y>0\bigr\}
\]
is the moduli set for the third vertex. Let
\[
p_0:=\Bigl(\tfrac12,\tfrac{\sqrt3}{2}\Bigr).
\]

We divide $\Omega$ into the following three subregions, whose union covers
$\Omega$:
\begin{align}
\Omega_{\mathrm{up}}
&:= \left\{(x,y)\in\Omega : y\ge \sqrt{3}/2-\varepsilon_{\mathrm{up}}\right\},
\label{def:sub-domain-up}\\
\Omega_{\mathrm{mid}}
&:= \left\{(x,y)\in\Omega :
\varepsilon_{\mathrm{down}}\le y,\ 
(x-1/2)^2+(y-\sqrt{3}/2)^2\ge \varepsilon_{\mathrm{up}}^2\right\},
\label{def:sub-domain-mid}\\
\Omega_{\mathrm{down}}
&:= \left\{(x,y)\in\Omega : y<\varepsilon_{\mathrm{down}}\right\},
\label{def:sub-domain-down}
\end{align}
where
\begin{equation}\label{eq:epsilon_up_down}
   \varepsilon_{\mathrm{up}}=0.122,
\qquad
\varepsilon_{\mathrm{down}}=0.04. 
\end{equation}

For the illustration of this geometric setting, see Figure \ref{fig:omega}.

The proof proceeds by verifying the positivity of $J_k$ $(k=1,2)$ on each of
these regions.

\begin{figure}[H]
    \centering
    \begin{tikzpicture}[scale=6,>=stealth]

  \def\eps{0.3} 

  \coordinate (O) at (0,0);
  \coordinate (A) at (0,0);                     
  \coordinate (B) at (1,0);                     
  \coordinate (C) at ({1/2},{sqrt(3)/2});       
  \coordinate (X) at ({1/2},0);                 
  \coordinate (P) at ($(C)+(0,-\eps)$);         

  \coordinate (R) at ({atan(sqrt(3) - 2*\eps)}:1);

  \draw (A) -- (B);
  \draw[name path=outer] (B) arc[start angle=0,end angle=60,radius=1];
  \draw (X) -- (C);


  
\draw[dash pattern=on 1pt off 1pt] (0.5,0.15) -- ({sqrt(1-0.15*0.15)},0.15);

  \draw[dashed]
  (P) -- ({sqrt(1-(sqrt(3)/2-\eps)*(sqrt(3)/2-\eps))},
          {sqrt(3)/2-\eps});

  
\draw[dash pattern=on 1pt off 1pt, name path=inner]
    (C) ++(-90:\eps) arc[start angle=-90,end angle=-40,radius=\eps];

  \draw[decorate,decoration={brace,mirror,raise=2pt}]
    (C) -- (P) node[midway,left=4pt] {$\varepsilon_{\mathrm{up}}$};

\draw[decorate,decoration={brace,raise=2pt}]
    (X) -- (0.5,0.15) node[midway,left=4pt] {$\varepsilon_{\mathrm{down}}$};

  \node at (0.59,0.70) {$\Omega_{\text{up}}$};
  \node at (0.70,0.34) {$\Omega_{\text{mid}}$};
  \node at (0.80,0.08) {$\Omega_{\text{down}}$};

  \node[below left] at (A) {$(0,0)$};
  \node[below]      at (B) {$(1,0)$};
  \node[above]      at (C) {$\left(\tfrac12,\tfrac{\sqrt3}{2}\right)$};

\end{tikzpicture}
    \caption{Subregions $\Omega_{\mathrm{up}}$, $\Omega_{\mathrm{mid}}$ and $\Omega_{\mathrm{down}}$}
    \label{fig:omega}
\end{figure}

\medskip

\textbf{Step 1 (Region $\Omega_{\mathrm{up}}$).}
\begin{enumerate}\itemsep3pt
\item[\emph{(1-1)}]
Using the first-order shape derivative formula, we prove analytically that
\[
\nabla J_k(\triangle^{p_0})=0
\qquad (k=1,2),
\]
that is, $p_0$ is a stationary point of $J_k$ for $k=1,2$.

\item[\emph{(1-2)}]
We compute a rigorous lower bound for the second-order derivative
$\partial_x^2 J_k$ $(k=1,2)$ and prove its positivity. The resulting strict
convexity in the $x$-direction implies that any minimizer in
$\Omega_{\mathrm{up}}$ must lie on the symmetry axis $x=1/2$.

\item[\emph{(1-3)}]
We then evaluate $\partial_y^2 J_k$ $(k=1,2)$ on the symmetry axis $x=1/2$
and verify that it is strictly positive.

\item[]
Combining these results, we conclude that the equilateral triangle
$\triangle^{p_0}$ is the unique minimizer of $J_k$ $(k=1,2)$ in
$\Omega_{\mathrm{up}}$.
\end{enumerate}

\medskip

\textbf{Step 2 (Region $\Omega_{\mathrm{mid}}$).}
\begin{enumerate}\itemsep3pt
\item[]
We compute rigorous lower bounds $\underline{J_k}(\triangle^p)$ $(k=1,2)$ such
that
\[
(J_k(\triangle^{p_0})=)\ 0 < \underline{J_k}(\triangle^p)\le J_k(\triangle^p)
\qquad\text{for all }p\in\Omega_{\mathrm{mid}}.
\]
This excludes the possibility that a global minimizer lies in
$\Omega_{\mathrm{mid}}$.
\end{enumerate}

\medskip

\textbf{Step 3 (Region $\Omega_{\mathrm{down}}$).}
\begin{enumerate}\itemsep3pt
\item[]
We show analytically that a minimizer cannot lie in the nearly degenerate
region $\Omega_{\mathrm{down}}$.
\end{enumerate}

\begin{remark}\label{rem:J1J2not-ordered}
There is no global ordering between $J_1$ and $J_2$ on the class of triangles.
Indeed, for
\[
\triangle=\operatorname{conv}\{(0,0),(1,0),(s,t)\},
\]
we obtain the following certified inequalities:
\[
J_1(\triangle)-J_2(\triangle) > 7.5882748973\times 10^{-8}
\quad\text{for }(s,t)=(0.5,\,0.8560254038),
\]
whereas
\[
J_1(\triangle)-J_2(\triangle) < -5.3967328847
\quad\text{for }(s,t)=(0.5,\,0.1).
\]
Hence one needs to treat the proofs of each inequality separately. 
\end{remark}

\subsection{Auxiliary estimates}

In this subsection, we collect several estimates used in the
computer-assisted proof.

\medskip

\textbf{Perturbation of eigenvalues.}

\begin{lemma}[Appendix Lemma A.2 in \cite{endo2023shape}]
\label{lem:eig-perturbation-first}
Let $\lambda_i^p$ and $\lambda_i^{\tilde p}$ denote the $i$-th eigenvalues
$(i=1,2,\dots)$ on $\triangle^p$ and $\triangle^{\tilde p}$, respectively.
Then
\begin{equation}\label{eq:perturbation-first}
\lambda_{\min}\!\left(S_{p,\tilde p}^{-1}S_{p,\tilde p}^{-\intercal}\right)\lambda_i^p
\;\le\;
\lambda_i^{\tilde p}
\;\le\;
\lambda_{\max}\!\left(S_{p,\tilde p}^{-1}S_{p,\tilde p}^{-\intercal}\right)\lambda_i^p,
\end{equation}
where $S_{p,\tilde p}:\mathbb{R}^2\to\mathbb{R}^2$ denotes the linear map from
$\triangle^p$ onto $\triangle^{\tilde p}$.
\end{lemma}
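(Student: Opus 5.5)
The plan is to realize $S_{p,\tilde p}$ as a linear change of variables and combine it with the min--max characterization of Dirichlet eigenvalues. Since $\triangle^p$ and $\triangle^{\tilde p}$ share the vertices $(0,0)$ and $(1,0)$, the linear map $S:=S_{p,\tilde p}$ fixes $(1,0)$ and sends $p$ to $\tilde p$. It is invertible because both triangles are nondegenerate ($y,\tilde y>0$), and it maps $\triangle^p$ bijectively onto $\triangle^{\tilde p}$. Composition with $S^{-1}$, that is $v\mapsto \tilde v:=v\circ S^{-1}$, is then a linear isomorphism from $H^1_0(\triangle^p)$ onto $H^1_0(\triangle^{\tilde p})$. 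It preserves dimensions of subspaces, so it can be used directly in Courant--Fischer.

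Next I transform the two quadratic forms. By the chain rule, $\nabla\tilde v(S\xi)=S^{-\intercal}\nabla v(\xi)$, and the Jacobian of $S$ is the constant $|\det S|$. This gives
\[
\|\tilde v\|_{\triangle^{\tilde p}}^2=|\det S|\,\|v\|_{\triangle^p}^2,
\qquad
\|\nabla\tilde v\|_{\triangle^{\tilde p}}^2=|\det S|\int_{\triangle^p}\bigl(S^{-1}S^{-\intercal}\nabla v,\nabla v\bigr)\,d\xi .
\]
The Jacobian factors cancel in the Rayleigh quotient. Write $M:=S^{-1}S^{-\intercal}$, which is symmetric positive definite. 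Pointwise, $\lambda_{\min}(M)|\nabla v|^2\le (M\nabla v,\nabla v)\le\lambda_{\max}(M)|\nabla v|^2$. Hence
\[
\lambda_{\min}(M)\,R_p(v)\le R_{\tilde p}(\tilde v)\le \lambda_{\max}(M)\,R_p(v),
\]
where $R_p(v)=\|\nabla v\|^2/\|v\|^2$ denotes the Rayleigh quotient on $\triangle^p$.

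For the last step I apply the min--max formula $\lambda_i=\min_{\dim V=i}\max_{v\in V\setminus\{0\}}R(v)$. The correspondence $V\mapsto \tilde V:=\{v\circ S^{-1}:v\in V\}$ is a bijection between $i$-dimensional subspaces of the two spaces. Taking the max over $V$ and then the min over all $V$ in the two-sided bound gives the claimed inequality \eqref{eq:perturbation-first}.

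I do not expect a real obstacle here. The only point needing care is to check that the pullback preserves $H^1_0$, which holds because $S$ is a bi-Lipschitz linear map of the closures. The other point is to track the transpose correctly, so that the matrix appearing is $S^{-1}S^{-\intercal}$ rather than $S^{-\intercal}S^{-1}$. Both matrices have the same eigenvalues, so either form gives the same bound.
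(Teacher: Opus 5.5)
Your argument is correct. You pull back by $S_{p,\tilde p}$ and bound the transformed Dirichlet form pointwise by the extreme eigenvalues of $S_{p,\tilde p}^{-1}S_{p,\tilde p}^{-\intercal}$; the constant Jacobian cancels in the Rayleigh quotient, and the min--max principle over corresponding $i$-dimensional subspaces finishes the proof. The paper gives no proof of its own here and only cites Appendix Lemma A.2 of \cite{endo2023shape}; as far as I know, that result is proved by the same standard change-of-variables plus min--max argument, though this can't be checked from the paper itself.
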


\textbf{Error estimates for eigenspaces}

We next introduce distances between subspaces of $H^1_0(\triangle)$, which will be used
to quantify errors in numerical approximations of eigenspaces. For two subspaces
$E,E^h\subset H^1_0(\triangle)$, we define the directed distances
$\delta_a,\delta_b,\bar{\delta}_a,\bar{\delta}_b$ by
\begin{gather*}
  \delta_a(E,E^h)
  :=\max_{\substack{v\in E\\\|\nabla v\|_{\triangle}=1}}
      \min_{v_h\in E^h}\|\nabla v-\nabla v_h\|_{\triangle},
  \qquad
  \delta_b(E,E^h)
  :=\max_{\substack{v\in E\\\|v\|_{\triangle}=1}}
      \min_{v_h\in E^h}\|v-v_h\|_{\triangle},\\[0.3em]
  \bar{\delta}_a(E,E^h)
  :=\max_{\substack{v\in E\\\|v\|_{\triangle}=1}}
      \min_{\substack{v_h\in E^h\\\|v_h\|_{\triangle}=1}}
      \|\nabla v-\nabla v_h\|_{\triangle},
  \qquad
  \bar{\delta}_b(E,E^h)
  :=\max_{\substack{v\in E\\\|v\|_{\triangle}=1}}
      \min_{\substack{v_h\in E^h\\\|v_h\|_{\triangle}=1}}
      \|v-v_h\|_{\triangle}.
\end{gather*}

To describe errors for clusters of eigenfunctions, we introduce notation for
clusters of eigenvalues. Let $n_k$ and $N_k$ denote the indices of the first and
last eigenvalues in the $k$-th cluster; see Figure~\ref{fig:eigenvalue-distribution}.
Eigenvalues within a cluster are not assumed to be equal. For the fixed cluster $k$
under consideration, we set $n=n_k$ and $N=N_k$ to simplify the notation.

\begin{figure}[H]
  \centering
  \includegraphics[keepaspectratio, scale=0.18]{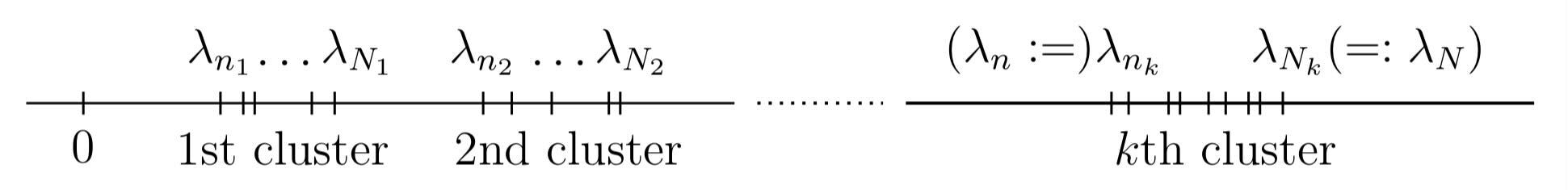}
  \caption{\label{fig:eigenvalue-distribution} Clusters of eigenvalues}
\end{figure}

Let $E_k$ be the space spanned by the exact eigenfunctions associated with the
$k$-th cluster:
\begin{equation}\label{eq:def-Ek}
  E_k=\operatorname{span}\{u_{n},u_{n+1},\dots,u_{N}\}.
\end{equation}
Similarly, let $u_{i,h}\in H^1_0(\triangle)$ be approximations of the exact
eigenfunctions $u_i$ for $i=n,\dots,N$, and define the corresponding approximate
eigenspace by
\begin{equation}\label{eq:def-Ehatk}
  E^h_{k}=\operatorname{span}\{u_{n,h},u_{n+1,h},\dots,u_{N,h}\}.
\end{equation}
We also define
\[
  \lambda_{n,h}:=\min_{u_h\in E^h_k}
     \frac{\|\nabla u_h\|^2_{\triangle}}{\|u_h\|_{\triangle}^2},
  \qquad
  \lambda_{N,h}:=\max_{u_h\in E^h_k}
     \frac{\|\nabla u_h\|^2_{\triangle}}{\|u_h\|_{\triangle}^2}.
\]

We define measures of non-orthogonality between $E$ and $E^h$ by
\begin{equation}
  \label{def:espilon_a_b}
  \varepsilon^h_a(E,E^h)
  =\max_{\substack{v\in E\\\|\nabla v\|_{\triangle}=1}}
    \max_{\substack{v_h\in E^h\\ \|\nabla v_h\|_{\triangle}=1}}
    (\nabla v,\nabla v_h)_{\triangle},
  \qquad
  \varepsilon^h_b(E,E^h)
  =\max_{\substack{v\in E\\\|v\|_{\triangle}=1}}
    \max_{\substack{v_h\in E^h\\ \|v_h\|_{\triangle}=1}}
    (v,v_h)_{\triangle}.
\end{equation}

We recall the following basic identity, which is valid for each exact eigenpair
$(\lambda_i,u_i)$ and each approximation $(\lambda_h,u_h)$ with
$u_h\in H^1_0(\triangle)$ and
$\|\nabla u_h\|^2_{\triangle}=\lambda_h\|u_h\|^2_{\triangle}$
(see, e.g., \cite[p.~55]{boffi2010finite}):
\begin{equation}\label{eq:fundamental-formula}
  \|\nabla u_i-\nabla u_h\|_{\triangle}^2
  =\lambda_i\|u_i-u_h\|_{\triangle}^2
   -(\lambda_i-\lambda_h)\|u_h\|_{\triangle}^2.
\end{equation}

The following estimate will be used to measure the error between clusters of
eigenfunctions. An important feature of this estimate is that it bounds the error
of the approximate eigenspace using only eigenvalue information and the mutual
orthogonality of the approximate eigenfunctions. Note that all eigenvalues
appearing in this paper can be bounded by Lemmas~\ref{lem:est-tau} and
\ref{thm:Lehmann--Goerisch}, or by the perturbation estimate of
Lemma~\ref{lem:eig-perturbation-first}.

\begin{lemma}[Theorem~1 of Liu--Vejchodsk{\`y} \cite{liu2022fully}]
  \label{lem:eigenvec-estimation-algorithm-1}
  Let $\rho>0$ satisfy $\lambda_{n_k}<\rho\leq\lambda_{N_k+1}$. Then
  \begin{align}\label{eq:estimation-for-delta}
    \begin{split}
      \delta^2_a(E_k,E^h_{k})
      &\leq
      \frac{\rho(\lambda_{N_k,h}-\lambda_{n_k})
            +\lambda_{n_k}\lambda_{N_k,h}\,\theta_a^{(k)}}
           {\lambda_{N_k,h}(\rho-\lambda_{n_k})}
      \;(:=\mathcal{E}^2_a(E_k,E^h_k)),\\
      \delta^2_b(E_k,E^h_{k})
      &\leq
      \frac{\lambda_{N_k,h}-\lambda_{n_k}+\theta_b^{(k)}}
           {\rho-\lambda_{n_k}}
      \;(:=\mathcal{E}^2_b(E_k,E^h_k)).
    \end{split}
  \end{align}
  Here
  \begin{align*}
    \theta_a^{(k)}
    &=\sum_{l=1}^{k-1}
      \frac{\rho-\lambda_{n_l}}{\lambda_{n_l}}
      \bigl[\varepsilon^h_a(E^h_{l},E^h_{k})
           +\delta_a(E_{l},E^h_{l})\bigr]^2,\\
    \theta_b^{(k)}
    &=\sum_{l=1}^{k-1}
      (\rho-\lambda_{n_l})
      \bigl[\varepsilon^h_b(E^h_{l},E^h_{k})
           +\delta_b(E_{l},E^h_{l})\bigr]^2.
  \end{align*}
\end{lemma}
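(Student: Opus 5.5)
The plan is to prove the $L^2$ bound $\delta_b$ first and then derive the energy bound $\delta_a$ by the same argument in the $a$-inner product. Both rest on an orthogonal splitting of an arbitrary approximate function.

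\textbf{Splitting.} Since all the distances are defined by max/min over unit vectors, I will bound the distance in the reverse direction, from $E^h_k$ to $E_k$. I will then transfer it using the standard fact that for two subspaces of equal finite dimension $N_k-n_k+1$, the directed gap is symmetric when it is less than $1$.

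Fix $v_h\in E^h_k$ with $\|v_h\|_\triangle=1$ and expand it in the exact eigenbasis:
\[
v_h=\sum_{l<k} a_l + b + c,\qquad a_l\in E_l,\; b\in E_k,\; c\in \operatorname{span}\{u_j: j\ge N_k+1\}.
\]
These components are mutually orthogonal in both $L^2$ and the energy inner product. Hence
\[
\min_{w\in E_k}\|v_h-w\|^2_\triangle=\sum_{l<k}\|a_l\|^2_\triangle+\|c\|^2_\triangle .
\]

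\textbf{Key spectral inequality.} Because $v_h\in E^h_k$, its Rayleigh quotient is at most $\lambda_{N_k,h}$. On each piece the quotient is bounded below by the spectrum: $\|\nabla b\|^2\ge\lambda_{n_k}\|b\|^2$, $\|\nabla c\|^2\ge\rho\|c\|^2$ (using $\rho\le\lambda_{N_k+1}$), and $\|\nabla a_l\|^2\ge\lambda_{n_l}\|a_l\|^2$. Subtracting $\lambda_{n_k}\|v_h\|^2$ from both sides gives
\[
(\rho-\lambda_{n_k})\|c\|^2\le(\lambda_{N_k,h}-\lambda_{n_k})+\sum_{l<k}(\lambda_{n_k}-\lambda_{n_l})\|a_l\|^2 .
\]
Adding $(\rho-\lambda_{n_k})\sum_{l<k}\|a_l\|^2$ to both sides turns the right-hand side into $(\lambda_{N_k,h}-\lambda_{n_k})+\sum_{l<k}(\rho-\lambda_{n_l})\|a_l\|^2$. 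Dividing by $\rho-\lambda_{n_k}>0$ then yields exactly
\[
\sum_{l<k}\|a_l\|^2+\|c\|^2\le\frac{\lambda_{N_k,h}-\lambda_{n_k}+\sum_{l<k}(\rho-\lambda_{n_l})\|a_l\|^2}{\rho-\lambda_{n_k}} .
\]

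\textbf{Controlling the lower-cluster components.} I still need $\|a_l\|\le\varepsilon^h_b(E^h_l,E^h_k)+\delta_b(E_l,E^h_l)$. Write $\|a_l\|=(v_h,\hat a_l)$ with $\hat a_l=a_l/\|a_l\|\in E_l$. Choose $w_h\in E^h_l$ with $\|\hat a_l-w_h\|\le\delta_b(E_l,E^h_l)$. Then
\[
(v_h,\hat a_l)=(v_h,w_h)+(v_h,\hat a_l-w_h)\le \varepsilon^h_b\,\|w_h\|+\delta_b .
\]
Handling the normalization of $w_h$ (here $\|w_h\|\le 1$ after taking the best approximation, which is the projection) is the delicate point. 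This gives $\theta_b^{(k)}$ and hence $\delta_b^2(E^h_k,E_k)\le\mathcal E_b^2$.

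\textbf{Energy version.} The $\delta_a$ bound follows the same steps in the energy inner product. Normalize $\|\nabla v_h\|=1$, so $\|v_h\|^2\ge 1/\lambda_{N_k,h}$. Run the same inequality with $\|\nabla c\|^2\ge\rho\|c\|^2$ rewritten as $\|\nabla c\|^2(1-\lambda_{n_k}/\rho)\le\cdots$. This produces the factors $\rho/\lambda_{N_k,h}$ and $(\rho-\lambda_{n_l})/\lambda_{n_l}$ in $\mathcal E_a^2$.

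\textbf{Main obstacle.} I expect the main difficulty to be the final transfer from $\delta(E^h_k,E_k)$ to $\delta(E_k,E^h_k)$. It needs $E^h_k$ to have the same dimension as $E_k$, and the bound to be less than $1$ so that the orthogonal projection $E^h_k\to E_k$ is bijective. It also requires checking that the projection-based gap coincides in the two directions in both norms. I would try to prove this with the classical Kato lemma on gaps between subspaces of equal dimension.
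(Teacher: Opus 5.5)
Your proof is correct. The paper gives no proof of its own here; the lemma is quoted from Liu--Vejchodsk\'y. Your argument reconstructs the Davis--Kahan-type reasoning behind it:
\begin{itemize}
\item You bound the gap from the approximate side using the Rayleigh quotient on $E^h_k$ and the spectral splitting $v_h=\sum_{l<k}a_l+b+c$. This uses that $E_1,\dots,E_{k-1}$ exhaust all eigenfunctions below index $n_k$, as in the paper's cluster setup.
\item You control leakage into the lower clusters by $\|a_l\|\le\varepsilon^h_b(E^h_l,E^h_k)+\delta_b(E_l,E^h_l)$. Taking $w_h$ as the orthogonal projection, so that $\|w_h\|\le 1$, is exactly right, and the direction $E_l\to E^h_l$ matches $\theta_b^{(k)}$.
\item You then transfer the bound to the other direction.
\end{itemize}
Your sketched energy version also closes exactly. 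With $\|\nabla v_h\|=1$,
\[
\frac{1}{\lambda_{N_k,h}}\le\|v_h\|^2\le\sum_{l<k}\frac{\|\nabla a_l\|^2}{\lambda_{n_l}}+\frac{\|\nabla b\|^2}{\lambda_{n_k}}+\frac{\|\nabla c\|^2}{\rho}.
\]
Substitute $\|\nabla b\|^2=1-\sum_{l<k}\|\nabla a_l\|^2-\|\nabla c\|^2$, rearrange as in the $L^2$ case, and multiply by $\rho\lambda_{n_k}$. Together with $\|\nabla a_l\|\le\varepsilon^h_a(E^h_l,E^h_k)+\delta_a(E_l,E^h_l)$, obtained the same way in the energy inner product, this gives precisely $\mathcal{E}_a^2$.

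The step you single out as the main obstacle is elementary. It needs neither Kato's lemma nor the bound being less than $1$. For subspaces $M,N$ of equal finite dimension, Pythagoras gives
\[
\delta(M,N)^2=1-\sigma_{\min}(P_N|_M)^2, \qquad \delta(N,M)^2=1-\sigma_{\min}(P_M|_N)^2.
\]
Now $P_M|_N$ is the adjoint of $P_N|_M$, and both map between spaces of the same dimension, so their smallest singular values coincide. This holds in both the $L^2$ and the energy inner product.

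The hypothesis that genuinely matters, and which the statement leaves tacit, is that $u_{n,h},\dots,u_{N,h}$ are linearly independent, so that $\dim E^h_k=\dim E_k$. Without it, $\delta(E_k,E^h_k)=1$ while your reverse-direction bound can be small, and the transfer fails.
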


\begin{lemma}[Lemma~2 of \cite{liu2022fully}]
  \label{lem:est-eta-b-h}
  Let $v_1,\dots,v_m$ and $v'_1,\dots,v'_{m'}$ be bases of subspaces $E$ and
  $E'$ of $H^1_0(\triangle)$, respectively. Define the matrices $F,G,H$ by
  \[
    F=\bigl((v_i,v'_j)_{\triangle}\bigr)_{m\times m'},\qquad
    G=\bigl((v_i,v_j)_{\triangle}\bigr)_{m\times m},\qquad
    H=\bigl((v'_i,v'_j)_{\triangle}\bigr)_{m'\times m'}.
  \]
  Then
  \[
    (\varepsilon^h_b(E,E'))^2
    =\lambda_{\max}\!\left(F^{\mathsf T}G^{-1}F,\,H\right)
    =\lambda_{\max}\!\left(FH^{-1}F^{\mathsf T},\,G\right),
  \]
  where $\lambda_{\max}(A,B)$ denotes the largest generalized eigenvalue of
  $Ax=\lambda Bx$. Moreover, suppose that
  \[
    \|F^{\mathsf T}F\|_2\le \eta_F,\qquad
    \|I-G\|_2\le \eta_G,\qquad
    \|I-H\|_2\le \eta_H.
  \]
  If $\eta_G,\eta_H<1$, then
  \[
    (\varepsilon^h_b(E,E'))^2
    \le \frac{\eta_F}{(1-\eta_G)(1-\eta_H)}.
  \]
\end{lemma}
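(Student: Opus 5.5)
The plan is to reduce both claims to finite-dimensional linear algebra by writing elements of $E$ and $E'$ in coordinates with respect to the given bases. For $v=\sum_i a_i v_i\in E$ and $v'=\sum_j b_j v'_j\in E'$ with $a\in\mathbb{R}^m$, $b\in\mathbb{R}^{m'}$, we have
\[
(v,v')_{\triangle}=a^{\mathsf T}Fb,\qquad \|v\|_{\triangle}^2=a^{\mathsf T}Ga,\qquad \|v'\|_{\triangle}^2=b^{\mathsf T}Hb .
\]
Since $\{v_i\}$ and $\{v'_j\}$ are bases, the Gram matrices $G$ and $H$ are symmetric positive definite. The definition \eqref{def:espilon_a_b} then becomes
\[
\varepsilon^h_b(E,E')=\max_{a^{\mathsf T}Ga=1}\ \max_{b^{\mathsf T}Hb=1}\ a^{\mathsf T}Fb .
\]

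\textbf{Identity.} First fix $a$ and maximize over $b$. Apply the Cauchy--Schwarz inequality in the $H$-inner product to $a^{\mathsf T}Fb=(H^{-1}F^{\mathsf T}a)^{\mathsf T}Hb$. The inner maximum equals $\sqrt{a^{\mathsf T}FH^{-1}F^{\mathsf T}a}$, and it is attained at $b\propto H^{-1}F^{\mathsf T}a$. This maximum is nonnegative, since replacing $b$ by $-b$ flips the sign of $a^{\mathsf T}Fb$; hence squaring is harmless. Squaring and maximizing over $a^{\mathsf T}Ga=1$ gives a generalized Rayleigh quotient. Its maximum is $\lambda_{\max}(FH^{-1}F^{\mathsf T},G)$. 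Performing the optimization in the opposite order (first over $a$, then over $b$) gives $\lambda_{\max}(F^{\mathsf T}G^{-1}F,H)$ by the same argument, so the two expressions agree.

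\textbf{Bound.} Start from the Rayleigh quotient form and estimate
\[
\frac{a^{\mathsf T}FH^{-1}F^{\mathsf T}a}{a^{\mathsf T}Ga}\le \frac{\|H^{-1}\|_2\,\|F^{\mathsf T}a\|^2}{\lambda_{\min}(G)\,\|a\|^2}\le \frac{\|H^{-1}\|_2\,\|FF^{\mathsf T}\|_2}{\lambda_{\min}(G)}.
\]
We then use three facts:
\begin{itemize}
\item $\|FF^{\mathsf T}\|_2=\|F^{\mathsf T}F\|_2\le\eta_F$, since both equal the square of the largest singular value of $F$;
\item $\|I-G\|_2\le\eta_G<1$ forces the eigenvalues of $G$ into $[1-\eta_G,1+\eta_G]$, so $\lambda_{\min}(G)\ge 1-\eta_G>0$;
\item similarly $\lambda_{\min}(H)\ge1-\eta_H>0$, so $\|H^{-1}\|_2\le 1/(1-\eta_H)$.
\end{itemize}
Combining these yields $(\varepsilon^h_b(E,E'))^2\le \eta_F/\bigl((1-\eta_G)(1-\eta_H)\bigr)$.

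No step is a genuine obstacle. The only points needing care are the sign and squaring issue in the identity, handled by the $b\mapsto -b$ symmetry, and the invertibility of $G$ and $H$. Invertibility follows from linear independence of the bases, and is also guaranteed by the hypotheses $\eta_G,\eta_H<1$.
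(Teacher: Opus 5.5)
Your proof is correct. The coordinate reduction, the Cauchy--Schwarz step in the $H$-inner product that produces the generalized Rayleigh quotient, and the bound $\|H^{-1}\|_2\,\|FF^{\mathsf T}\|_2/\lambda_{\min}(G)$ (with eigenvalue localization from $\|I-G\|_2\le\eta_G$ and $\|I-H\|_2\le\eta_H$) are all sound. The paper does not prove this lemma; it cites it from Liu--Vejchodsk\'y, and your argument is the standard one behind that result.
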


\begin{lemma}[Lemma~3 of \cite{liu2022fully}]
  \label{lem:bbar-b-relation}
  For the distances $\bar{\delta}_a$ and $\delta_b$, the following relation holds:
  \begin{equation}\label{eq:bbar-b-relation}
    \overline{\delta}^2_a(E_k,E^h_k)
    \leq \lambda_{N_k}+\hat\lambda_{N_k}
          -2\lambda_{n_k}\sqrt{1-\delta^2_b(E_k,E^h_k)}.
  \end{equation}
\end{lemma}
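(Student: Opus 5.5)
The plan is to bound the inner minimum in $\bar\delta_a$ by exhibiting one explicit admissible $v_h$ for each fixed $v$, and then to expand the squared gradient distance. Fix $v\in E_k$ with $\|v\|_\triangle=1$ and write $v=\sum_{j=n}^{N}c_ju_j$ with $\sum c_j^2=1$, using the $L^2$-orthonormal exact eigenfunctions. For any unit vector $v_h\in E^h_k$,
\[
\|\nabla v-\nabla v_h\|_\triangle^2=\|\nabla v\|_\triangle^2+\|\nabla v_h\|_\triangle^2-2(\nabla v,\nabla v_h)_\triangle .
\]
The first two terms are handled directly. We have $\|\nabla v\|^2_\triangle=\sum\lambda_jc_j^2\le\lambda_{N_k}$. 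Also $\|\nabla v_h\|_\triangle^2\le\hat\lambda_{N_k}$, because $\hat\lambda_{N_k}$ is the maximal Rayleigh quotient on $E^h_k$ (the quantity written $\lambda_{N,h}$ above). It therefore suffices to choose $v_h$ so that $(\nabla v,\nabla v_h)_\triangle\ge\lambda_{n_k}\sqrt{1-\delta_b^2(E_k,E^h_k)}$.

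\textbf{Rewriting the cross term.} Set $Lv:=\sum_j\lambda_jc_ju_j\in E_k$, the action of $-\Delta$ on $E_k$. By the weak eigenvalue equation \eqref{eq:eigenvalue-problem}, for every $w\in H^1_0(\triangle)$ we have $(\nabla v,\nabla w)_\triangle=(Lv,w)_\triangle$. Moreover
\[
\|Lv\|_\triangle^2=\sum\lambda_j^2c_j^2\ge\lambda_{n_k}^2 ,
\]
since all eigenvalues in the cluster are at least $\lambda_{n_k}$.

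\textbf{Choice of $v_h$ (the key step).} The natural first choice, the normalized $L^2$-projection of $v$ itself, does not work: $(Lv,Pv)$ is not obviously comparable to $\lambda_{n_k}(v,Pv)$, since $P$ does not map into $E_k$. Instead, let $P_h$ be the $L^2$-orthogonal projection onto $E^h_k$ and take
\[
v_h:=\frac{P_h(Lv)}{\|P_h(Lv)\|_\triangle}.
\]
Then
\[
(\nabla v,\nabla v_h)_\triangle=(Lv,v_h)_\triangle=\|P_h(Lv)\|_\triangle .
\]
Since $Lv/\|Lv\|_\triangle$ is a unit vector of $E_k$, the definition of $\delta_b$ gives $\|Lv-P_hLv\|_\triangle\le\delta_b\|Lv\|_\triangle$. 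Pythagoras then yields
\[
\|P_hLv\|_\triangle\ge\|Lv\|_\triangle\sqrt{1-\delta_b^2}\ge\lambda_{n_k}\sqrt{1-\delta_b^2}.
\]
This also shows $P_hLv\neq0$ whenever $\delta_b<1$, so $v_h$ is well defined. In the degenerate case $\delta_b=1$ the square-root term vanishes, and any unit $v_h$ gives the bound, provided its cross term is nonnegative; otherwise we replace $v_h$ by $-v_h$.

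\textbf{Conclusion.} Combining the three estimates gives
\[
\min_{v_h}\|\nabla v-\nabla v_h\|_\triangle^2\le\lambda_{N_k}+\hat\lambda_{N_k}-2\lambda_{n_k}\sqrt{1-\delta_b^2}
\]
for every such $v$. Taking the maximum over $v$ yields \eqref{eq:bbar-b-relation}.
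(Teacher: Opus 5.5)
Your proof is correct and complete. The paper gives no argument for this lemma; it is quoted from Liu--Vejchodsk\'y \cite{liu2022fully}. So there is nothing in the text to compare against, and your derivation stands as a self-contained proof.

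All the steps check:
\begin{itemize}
\item $\|\nabla v\|_\triangle^2=\sum_j\lambda_jc_j^2\le\lambda_{N_k}$.
\item $\|\nabla v_h\|_\triangle^2\le\hat\lambda_{N_k}$, once $\hat\lambda_{N_k}$ is read, as you do, as the maximal Rayleigh quotient $\lambda_{N_k,h}$ on $E^h_k$. This is what the original lemma means, and your bound needs exactly this reading.
\item $(\nabla v,\nabla w)_\triangle=(Lv,w)_\triangle$ for all $w\in H^1_0(\triangle)$.
\item $\|Lv-P_hLv\|_\triangle\le\delta_b\|Lv\|_\triangle$, obtained by applying the definition of $\delta_b$ to the unit vector $Lv/\|Lv\|_\triangle\in E_k$, followed by Pythagoras.
\end{itemize}

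The essential idea is the one you isolated: project $Lv$ rather than $v$. Your caution about the naive choice is justified. When the cluster contains distinct eigenvalues, $(\nabla v,\nabla P_hv)_\triangle=(Lv,P_hv)_\triangle$ need not dominate $\lambda_{n_k}\|P_hv\|_\triangle^2$. It can even be negative if the ratio $\lambda_{N_k}/\lambda_{n_k}$ is large and $E^h_k$ is poorly aligned with $E_k$. Your choice instead makes the cross term exactly $\|P_hLv\|_\triangle\ge\|Lv\|_\triangle\sqrt{1-\delta_b^2}$.

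For the blocks used in Algorithm~\ref{alg:eigen-errors}, where $\lambda_{n_k}=\cdots=\lambda_{N_k}$, one has $Lv=\lambda_{n_k}v$, and the two choices coincide.
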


Recall that $\mathrm{Est}_{a}(u_j,\hat{u}_j)$ denotes a constant satisfying
\begin{equation}\label{eq:def-of-estimators-c}
  \|\nabla (u_j-\hat{u}_j)\|
  \le \mathrm{Est}_{a}(u_j,\hat{u}_j).
\end{equation}

The algorithm below describes how to compute the estimator  $\mathrm{Est}_{a}(u_j ,\hat{u}_j)$ using Lemmas \ref{lem:eig-perturbation-first}, \ref{lem:eigenvec-estimation-algorithm-1}, \ref{lem:est-eta-b-h} and \ref{lem:bbar-b-relation}.

\begin{framed}
\captionsetup{type=algorithm}
\captionof{algorithm}{}
\label{alg:eigen-errors}

\begin{algorithmic}
\State \textbf{Data:} The approximate eigen-pairs $(\hat{\lambda}_j,\hat{u}_j)$ $(j=1,\cdots,N_k+1)$ obtained by conforming FEMs, and the CR FEM eigenvalues $\lambda_{j,h}^{\CR}$.
\State \textbf{Result:} The upper bounds of the errors $\mathrm{Est}(\lambda_j,\hat{\lambda}_j)$ and  $\mathrm{Est}_{a}(u_j ,\hat{u}_j)$. 
\State 
\State \textbf{Procedures:}
\State \textbf{[Estimation of eigenvalues]}
\State Compute the range of eigenvalues $\lambda_j$ $(j=1,\cdots,N_{k}+1)$ using Lemma \ref{lem:est-tau} and Lemma \ref{thm:Lehmann--Goerisch}:
\begin{equation}
    \underline{\lambda}_j\leq\lambda_j\leq \overline{\lambda}_j~~(j=1,\cdots,N_{k}+1).
\end{equation}

\State \textbf{[Construction of Blocks]}
\State Construct blocks of exact eigenvalues
\[
\underbrace{\lambda_{n_1}= \cdots = \lambda_{N_1}}_{1\text{st block}}
<
\underbrace{\lambda_{n_2}= \cdots = \lambda_{N_2}}_{2\text{nd block}}
< \cdots <
\underbrace{\lambda_{n_k}= \cdots = \lambda_{N_k}}_{k\text{th block}}
<
\lambda_{n_{k+1}} .
\]
\State Verify the strict gaps
\[
\lambda_{N_i}\le \overline{\lambda}_{N_i}
<
\underline{\lambda}_{n_{i+1}}
\le \lambda_{n_{i+1}}
\qquad (i=1,\dots,k).
\]
\State Define
\[
E_k:=\operatorname{span}\{u_{n_k},u_{n_k+1},\dots,u_{N_k}\},
\qquad
E_k^h:=\operatorname{span}\{\hat u_{n_k},\hat u_{n_k+1},\dots,\hat u_{N_k}\}.
\]

\State

\State \textbf{[Estimation of $\delta_b(E_k,E^h_{k})$]}

Recursively obtain the values of $\mathcal{E}_b(E_k,E^h_k)$ $(k=1,\cdots k)$
using Lemma \ref{lem:eigenvec-estimation-algorithm-1} and Lemma \ref{lem:est-eta-b-h}:
\begin{equation}\label{eq:concrete-est-deltab}
    \delta_b(E_k,E^h_k)\leq \mathcal{E}_b(E_k,E^h_k) ~~(k=1,\cdots k).
\end{equation}
\For{$k=1,\ldots,K$}
  \State Compute $\varepsilon_b^h(E_l^h,E_k^h)$ for $l=1,\ldots,k-1$ by Lemma~\ref{lem:est-eta-b-h}.
  \State Set
  \[
    \theta_b^{(k)}
    :=
    \sum_{l=1}^{k-1}
    (\rho_k-\lambda_{n_l})
    \bigl[
      \varepsilon_b^h(E_l^h,E_k^h)+\mathcal{E}_b(E_l,E_l^h)
    \bigr]^2 .
  \]
  \State Obtain
  \[
    (\delta_b(E_k,E_k^h)
    \le)~~
    \mathcal{E}_b(E_k,E_k^h)
    :=
    \sqrt{
      \frac{\lambda_{N_k,h}-\lambda_{n_k}+\theta_b^{(k)}}
           {\rho_k-\lambda_{n_k}}
    } .
  \]
\EndFor

\State
\State \textbf{[Estimation of $\overline{\delta}_a(E_k,E^h_{k})$]}

Obtain the bounds for $\overline{\delta}_a(E_k,E^h_{k})$
using the estimate \eqref{eq:bbar-b-relation}:
\begin{equation*}
    \overline{\delta}^2_a(E_k,E^h_k)\leq \lambda_{N_k}+\hat\lambda_{N_k}-2\lambda_{n_k}\sqrt{1-\mathcal{E}^2_b(E_k,E^h_k)}~(=:\mathrm{Est}^2_{a}(u_j ,\hat{u}_j)).
\end{equation*}

\end{algorithmic}
\end{framed}

\medskip\medskip

We now present the details of Step 1 in the computer-assisted proof.

\subsection{Step 1: Case of $\Omega_{\mathrm{up}}$}\label{sec:case1}

In this step, we verify the optimality of the equilateral triangle in $\Omega_{\mathrm{up}}$.

\medskip

\textbf{Stationarity of the equilateral triangle.}

The following lemma shows that $p_0$ is a stationary point of both $J_1$ and $J_2$.

\begin{lemma}\label{lem:J-shape-derivative}
Let $p_0=(\tfrac12,\tfrac{\sqrt3}{2})$. Then $p_0$ is a stationary point of $J_k$ for
$k=1,2$.
\end{lemma}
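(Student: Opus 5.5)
Our plan is to show that both partial derivatives of $p\mapsto J_k(\triangle^p)$ vanish at $p_0$. We will use the first-order formula of Lemma~\ref{lem:first-order-formula} for $\lambda_1$, which applies because $\lambda_1$ is simple, together with elementary derivatives of the area and perimeter. Write $A(p)=|\triangle^p|=y/2$ and $L(p)=|\partial\triangle^p|=1+\sqrt{x^2+y^2}+\sqrt{(1-x)^2+y^2}$. Both $J_1$ and $J_2$ have the form
\[
J(p)=\lambda_1^p A(p)-G\bigl(L(p),A(p)\bigr),
\]
where $G$ is smooth. The chain rule therefore reduces stationarity to computing $\dot\lambda_1^{p_0}$ for $e=(1,0)$ and $e=(0,1)$, and the first derivatives of $A$ and $L$ at $p_0$.

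\emph{The $x$-direction.} Reflection across $x=1/2$ maps $\triangle^{(x,y)}$ isometrically onto $\triangle^{(1-x,y)}$. Hence $\lambda_1$, $A$ and $L$, and so $J_k$, are even functions of $x-1/2$, and $\partial_x J_k(p_0)=0$ follows immediately. One can also check this directly: $\partial_x L(p_0)=x/r_1-(1-x)/r_2=0$ and $\partial_xA=0$.

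\emph{The $y$-direction.} This is the step that needs work. Take $e=(0,1)$, so that $\dot P=\mathrm{diag}(0,-2/y_0)$ and $\dot\lambda_1^{p_0}=-\tfrac{2}{y_0}\|\partial_2 u_1\|^2$. For the equilateral triangle, $u_1$ is invariant under the rotation by $2\pi/3$; for the symmetric case this is clear because $\lambda_1$ is simple. The matrix $M=\int\nabla u_1\nabla u_1^{\mathsf T}$ is then invariant under conjugation by a rotation of order three, so $M=\tfrac12\lambda_1 I$. This gives $\|\partial_2u_1\|^2=\lambda_1/2$ and $\dot\lambda_1^{p_0}=-\lambda_1/y_0$. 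An independent check comes from homogeneity: $\lambda_1 A$ is scale invariant, and for the equilateral triangle it is stationary under all vertex motions by symmetry combined with Euler's relation. The explicit value $\lambda_1(\triangle^{p_0})=16\pi^2/9$ (side length $1$) then gives
\[
\partial_y(\lambda_1A)(p_0)=\dot\lambda_1 y_0/2+\lambda_1/2=0.
\]
So the $\lambda_1A$ term has zero $y$-derivative at $p_0$.

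It remains to show that $\partial_y G(L,A)=0$ at $p_0$. Here $G_1=\frac{\pi^2}{16}L^2/A$ and $G_2=c\,(L+\sqrt{4\pi A})^2/(4A)$ with $c=4\pi^2/(3+\sqrt{\pi\sqrt3})^2$. The key fact is that the isoperimetric-type quotient $L^2/A$ is stationary at the equilateral triangle among triangles. Concretely, at $p_0$ we have $\partial_yL=2y/r=\sqrt3$, $\partial_yA=1/2$, $L=3$ and $A=\sqrt3/4$, so
\[
\partial_y\log(L^2/A)=2\sqrt3/3-2/\sqrt3=0.
\]
Since $L^2/A$ is scale invariant, $G_1$ and $G_2$ are functions of $L^2/A$ alone: $G_2=c\,(\sqrt{L^2/A}+\sqrt{4\pi})^2/4$. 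Hence their $y$-derivatives also vanish. Combining the two directions gives $\nabla J_k(\triangle^{p_0})=0$ for $k=1,2$.

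The main obstacle is the eigenvalue term. We must justify that $u_1$ inherits the threefold rotational symmetry, which follows from simplicity of $\lambda_1$ and positivity of $u_1$, and then deduce the isotropy of the gradient second-moment matrix. Once that is in place, everything else is explicit differentiation.
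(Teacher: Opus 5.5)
Your proof is correct. Its skeleton matches the paper's: Hadamard's formula from Lemma~\ref{lem:first-order-formula} for $\dot\lambda_1^{p_0}$, elementary derivatives of $|\triangle^p|$ and $|\partial\triangle^p|$, and the chain rule. You differ in how you obtain the key integrals. The paper writes down the explicit eigenfunction $U_1$ and checks $\int u_{1,x}u_{1,y}=0$ and $\int (u_{1,y})^2=\lambda_1^{p_0}/2$ by direct integration, for a general direction $e=(a,b)$ at once. You instead use invariance of $u_1$ under the order-three rotation $R$ about the centroid to get $M=R^{\mathsf T}MR$ for $M=\int\nabla u_1\nabla u_1^{\mathsf T}$, and then $M=\tfrac12\lambda_1 I$ from $\operatorname{tr}M=\lambda_1$. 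Positivity of $u_1$ is not needed here, since $M$ is quadratic in $u_1$; in any case $u_1\circ g=\sigma u_1$ with $\sigma^3=1$ forces $\sigma=1$. Your reduction of both geometric terms to functions of the single scale-invariant quotient $L^2/A$ also makes the paper's ``we directly obtain'' steps transparent. What your route buys is independence from any closed-form eigenfunction, so it applies verbatim to any simple eigenvalue of the equilateral triangle. The paper's route is more pedestrian but fully explicit. The homogeneity-plus-symmetry remark you offer only as an ``independent check'' can itself be made into a complete proof, given differentiability. By reflection through each median, the gradient of a similarity-invariant functional with respect to vertex $v$ is $\alpha(v-c)$, with the same $\alpha$ for all three vertices by rotation symmetry. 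Euler's relation for degree-zero homogeneity, together with translation invariance, then gives $3\alpha|v-c|^2=0$. One harmless slip: $\lambda_1$ of the unit-side equilateral triangle is $16\pi^2/3$, not $16\pi^2/9$. Your cancellation $\dot\lambda_1 y_0/2+\lambda_1/2=0$ does not depend on this value.
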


\begin{proof}
Fix $e=(a,b)\in\R^2$ and set $p_t=p_0+te$.
Since $|\triangle^{(x,y)}|=y/2$, we have
\[
|\triangle^{p_0}|=\frac{\sqrt3}{4},
\qquad
\left.\frac{d}{dt}\right|_{t=0}|\triangle^{p_t}|=\frac{b}{2}.
\]
At $p_0$, the two non-horizontal sides have length $1$, and hence
\[
|\partial\triangle^{p_0}|=3,
\]
and
\[
\left.\frac{d}{dt}\right|_{t=0}|\partial\triangle^{p_t}|
=
\frac{(x-1)a+yb}{\sqrt{(x-1)^2+y^2}}\Big|_{p_0}
+\frac{xa+yb}{\sqrt{x^2+y^2}}\Big|_{p_0}
=\sqrt3\,b .
\]

It is known that the first Dirichlet eigenvalue of the equilateral triangle with side
length $1$ is
\[
\lambda_1^{p_0}=\frac{16\pi^2}{3},
\]
and a corresponding eigenfunction is given explicitly by
\begin{equation}\label{eq:explicit-u1}
U_1(x,y):=\sin\!\Bigl(\frac{4\pi}{\sqrt3}y\Bigr)
-2\sin\!\Bigl(\frac{2\pi}{\sqrt3}y\Bigr)\cos(2\pi x)
\qquad ((x,y)\in \triangle^{p_0}),
\end{equation}
which satisfies $U_1|_{\partial \triangle^{p_0}}=0$ and
$-\Delta U_1=\lambda_1^{p_0}U_1$.
Let $u_1:=U_1/\|U_1\|_{L^2(\triangle^{p_0})}$.

Applying Lemma~\ref{lem:first-order-formula} at $p_0$, we obtain
\begin{equation}\label{eq:hadamard-at-p0-explicit}
\dot\lambda_1^{p_0}
=\bigl(\dot P\nabla u_1,\nabla u_1\bigr)_{\triangle^{p_0}}
=-\frac{4}{\sqrt3}\left(
a\int_{\triangle^{p_0}} u_{1,x} u_{1,y}\,dx
+b\int_{\triangle^{p_0}} (u_{1,y})^2\,dx
\right).
\end{equation}
A direct computation using \eqref{eq:explicit-u1} yields
\begin{equation}\label{eq:two-integrals-explicit}
\int_{\triangle^{p_0}} u_{1,x}u_{1,y}\,dx=0,
\qquad
\int_{\triangle^{p_0}} (u_{1,y})^2\,dx=\frac12\,\lambda_1^{p_0}.
\end{equation}
Substituting \eqref{eq:two-integrals-explicit} into
\eqref{eq:hadamard-at-p0-explicit}, we obtain
\begin{equation}\label{eq:dotlambda-explicit}
\dot\lambda_1^{p_0}=-\frac{2}{\sqrt3}\,b\,\lambda_1^{p_0}.
\end{equation}

\paragraph{The case $k=1$.}
Since
\[
J_1(\triangle^{p})
=\lambda_1(\triangle^{p})|\triangle^{p}|
-\frac{\pi^2}{16}\frac{|\partial\triangle^{p}|^2}{|\triangle^{p}|}
-\frac{7\sqrt3\pi^2}{12},
\]
we have
\[
\left.\frac{d}{dt}\right|_{t=0}J_1(\triangle^{p_t})
=
\dot\lambda_1^{p_0}|\triangle^{p_0}|
+\lambda_1^{p_0}\left.\frac{d}{dt}\right|_{t=0}|\triangle^{p_t}|
-\frac{\pi^2}{16}
\left.\frac{d}{dt}\right|_{t=0}
\frac{|\partial\triangle^{p_t}|^2}{|\triangle^{p_t}|}.
\]
Using
\[
|\triangle^{p_0}|=\frac{\sqrt3}{4},
\qquad
\left.\frac{d}{dt}\right|_{t=0}|\triangle^{p_t}|=\frac{b}{2},
\]
and \eqref{eq:dotlambda-explicit}, the first two terms cancel:
\[
\dot\lambda_1^{p_0}|\triangle^{p_0}|
+\lambda_1^{p_0}\left.\frac{d}{dt}\right|_{t=0}|\triangle^{p_t}|
=
\Bigl(-\frac{2}{\sqrt3}b\lambda_1^{p_0}\Bigr)\frac{\sqrt3}{4}
+\lambda_1^{p_0}\frac{b}{2}
=0.
\]
Moreover, using
\[
|\partial\triangle^{p_0}|=3,
\qquad
\left.\frac{d}{dt}\right|_{t=0}|\partial\triangle^{p_t}|=\sqrt3\,b,
\]
together with the above identities for $|\triangle^{p_0}|$ and
$\left.\frac{d}{dt}\right|_{t=0}|\triangle^{p_t}|$, we directly obtain
\[
\left.\frac{d}{dt}\right|_{t=0}
\frac{|\partial\triangle^{p_t}|^2}{|\triangle^{p_t}|}=0.
\]
Therefore,
\[
\left.\frac{d}{dt}\right|_{t=0}J_1(\triangle^{p_t})=0.
\]

\paragraph{The case $k=2$.}
Set $C_*=\frac{4\pi^2}{(3+\sqrt{\pi\sqrt3})^2}$. Then
\[
J_2(\triangle^{p_t})
=
\lambda_1(\triangle^{p_t})|\triangle^{p_t}|
-
C_*\frac{\bigl(|\partial\triangle^{p_t}|+\sqrt{4\pi|\triangle^{p_t}|}\bigr)^2}
{4|\triangle^{p_t}|}.
\]
As above, the derivative of $\lambda_1(\triangle^{p_t})|\triangle^{p_t}|$ vanishes at
$t=0$. Using
\[
|\triangle^{p_0}|=\frac{\sqrt3}{4},
\qquad
\left.\frac{d}{dt}\right|_{t=0}|\triangle^{p_t}|=\frac{b}{2},
\qquad
|\partial\triangle^{p_0}|=3,
\qquad
\left.\frac{d}{dt}\right|_{t=0}|\partial\triangle^{p_t}|=\sqrt3\,b,
\]
we directly obtain
\[
\left.\frac{d}{dt}\right|_{t=0}
\frac{\bigl(|\partial\triangle^{p_t}|+\sqrt{4\pi|\triangle^{p_t}|}\bigr)^2}
{4|\triangle^{p_t}|}
=0.
\]
Therefore,
\[
\left.\frac{d}{dt}\right|_{t=0}J_2(\triangle^{p_t})=0.
\]
Since $e$ was arbitrary, $p_0$ is a stationary point of both $J_1$ and $J_2$.
\end{proof}

\medskip

We next evaluate the second-order directional derivatives of $J_k$ $(k=1,2)$ in the $x$- and $y$-directions. To derive lower bounds for $\frac{\partial^2 J_k}{\partial x^2}$ and $\frac{\partial^2 J_k}{\partial y^2}$ $(k=1,2)$, we use the following relations between the derivatives of $J_k$ and those of the eigenvalue $\lambda_1(\triangle^p)$.

Set
\[
r_1:=\sqrt{x^{2}+y^{2}},\qquad
r_2:=\sqrt{(x-1)^{2}+y^{2}}.
\]

\begin{lemma}\label{lem:Ji-xx-yy-simpler-nopA}
The second partial derivatives of $J_k(\triangle^p)$ $(k=1,2)$ satisfy
\begin{align}
\frac{\partial^2 J_k}{\partial x^2}(x,y)
&= \frac{1}{2}\,y\,\frac{\partial^{2}\lambda_{1}^{p}}{\partial x^{2}}
   + R^{(k)}_{xx}(x,y),
\label{eq:Jkxx-simple}\\[1ex]
\frac{\partial^2 J_k}{\partial y^2}(x,y)
&= \frac{1}{2}\,y\,\frac{\partial^{2}\lambda_{1}^{p}}{\partial y^{2}}
   + \frac{\partial\lambda_{1}^{p}}{\partial y}
   + R^{(k)}_{yy}(x,y),
\label{eq:Jkyy-simple}
\end{align}
where $R^{(k)}_{xx}$ and $R^{(k)}_{yy}$ are given as follows.

\medskip\noindent
\emph{(i) The case $k=1$.}
We have
\begin{align}
R^{(1)}_{xx}(x,y)
&=
-\frac{\pi^{2}}{4y}
\left[
\left(\frac{x}{r_1}+\frac{x-1}{r_2}\right)^{2}
+(1+r_1+r_2)\,y^{2}\left(\frac{1}{r_1^{3}}+\frac{1}{r_2^{3}}\right)
\right],
\label{eq:Rxx1-xy}\\[0.5ex]
R^{(1)}_{yy}(x,y)
&=
-\frac{\pi^{2}}{4y^{3}}
\left[
y^{2}(1+r_1+r_2)\left(\frac{x^{2}}{r_1^{3}}+\frac{(x-1)^{2}}{r_2^{3}}\right)
+\left(1+\frac{x^{2}}{r_1}+\frac{(x-1)^{2}}{r_2}\right)^{2}
\right].
\label{eq:Ryy1-xy}
\end{align}

\medskip\noindent
\emph{(ii) The case $k=2$.}
Set
\[
Q(x,y):=1+r_1+r_2+2\sqrt{\frac{\pi y}{2}},
\]
\[
Q_y(x,y):=\frac{y}{r_1}+\frac{y}{r_2}+\sqrt{\frac{\pi}{2y}},
\qquad
Q_{yy}(x,y):=\frac{x^{2}}{r_1^{3}}+\frac{(x-1)^{2}}{r_2^{3}}
-\frac12\sqrt{\frac{\pi}{2}}\,y^{-3/2}.
\]
Then
\begin{align}
R^{(2)}_{xx}(x,y)
&=
-\frac{4\pi^{2}}{(3+\sqrt{\pi\sqrt3})^{2}}\,
\frac{1}{y}
\left[
\left(\frac{x}{r_1}+\frac{x-1}{r_2}\right)^{2}
+Q(x,y)\,y^{2}\left(\frac{1}{r_1^{3}}+\frac{1}{r_2^{3}}\right)
\right],
\label{eq:Rxx2-xy}\\[0.5ex]
R^{(2)}_{yy}(x,y)
&=
-\frac{2\pi^{2}}{(3+\sqrt{\pi\sqrt3})^{2}}
\left[
\frac{2}{y}\Bigl(Q_y(x,y)^{2}+Q(x,y)\,Q_{yy}(x,y)\Bigr)
-\frac{4}{y^{2}}Q(x,y)\,Q_y(x,y)
+\frac{2}{y^{3}}Q(x,y)^{2}
\right].
\label{eq:Ryy2-xy}
\end{align}
\end{lemma}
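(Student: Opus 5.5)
The identity is a direct computation from three geometric facts about $\triangle^p$ with $p=(x,y)$, $y>0$:
\[
|\triangle^p|=\tfrac{y}{2},\qquad |\partial\triangle^p|=L(x,y):=1+r_1+r_2,\qquad r_1=\sqrt{x^2+y^2},\quad r_2=\sqrt{(x-1)^2+y^2}.
\]
I will first justify that the derivatives exist. Since $\lambda_1^p$ is simple and $\triangle^p$ is the image of a fixed triangle under an affine map that depends polynomially on $p$, the pulled-back eigenvalue problem depends analytically on $p$. Kato's perturbation theory then gives that $p\mapsto\lambda_1^p$ is real-analytic on $y>0$, so all partial derivatives below exist. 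They are also consistent with Lemmas \ref{lem:first-order-formula} and \ref{lem:second-order-difference-quotient-formula} taken in the directions $e=(1,0)$ and $e=(0,1)$.

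With this, $J_1=\tfrac{y}{2}\lambda_1^p-\tfrac{\pi^2}{8}\,\tfrac{L^2}{y}-\text{const}$. For $J_2$, note that $4|\triangle^p|=2y$ and $\sqrt{4\pi|\triangle^p|}=\sqrt{2\pi y}=2\sqrt{\pi y/2}$. Hence
\[
J_2=\tfrac{y}{2}\lambda_1^p-\tfrac{C_*}{2}\,\tfrac{Q^2}{y},\qquad C_*=\tfrac{4\pi^2}{(3+\sqrt{\pi\sqrt3})^2},\quad Q=L+2\sqrt{\pi y/2}.
\]
The leading terms come from the product rule. In $x$, the factor $y/2$ is constant, so only $\tfrac{y}{2}\partial_x^2\lambda_1^p$ survives. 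In $y$, we get $\tfrac{y}{2}\partial_y^2\lambda_1^p+\partial_y\lambda_1^p$. These match \eqref{eq:Jkxx-simple} and \eqref{eq:Jkyy-simple}, so it remains to differentiate the geometric terms $L^2/y$ and $Q^2/y$.

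The derivatives of $L$ are elementary:
\[
L_x=\tfrac{x}{r_1}+\tfrac{x-1}{r_2},\quad L_{xx}=y^2\Bigl(\tfrac1{r_1^3}+\tfrac1{r_2^3}\Bigr),\quad L_y=\tfrac{y}{r_1}+\tfrac{y}{r_2},\quad L_{yy}=\tfrac{x^2}{r_1^3}+\tfrac{(x-1)^2}{r_2^3}.
\]
For $Q$ we have $Q_x=L_x$ and $Q_{xx}=L_{xx}$, while $Q_y$ and $Q_{yy}$ gain the extra terms $\sqrt{\pi/(2y)}$ and $-\tfrac12\sqrt{\pi/2}\,y^{-3/2}$; this is exactly the definition of $Q_y,Q_{yy}$ in the statement.

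In $x$, $\partial_x^2(F^2/y)=\tfrac{2}{y}(F_x^2+FF_{xx})$ for $F=L$ or $F=Q$. Multiplying by $-\pi^2/8$ gives \eqref{eq:Rxx1-xy}, and multiplying by $-C_*/2$ gives \eqref{eq:Rxx2-xy}.

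In $y$, the quotient rule gives
\[
\partial_y^2\Bigl(\frac{F^2}{y}\Bigr)=\frac{2}{y}\bigl(F_y^2+FF_{yy}\bigr)-\frac{4}{y^2}FF_y+\frac{2}{y^3}F^2 .
\]
With $F=Q$ and the factor $-C_*/2=-2\pi^2/(3+\sqrt{\pi\sqrt3})^2$ this is literally \eqref{eq:Ryy2-xy}.

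For $k=1$ the same expression is regrouped as
\[
\partial_y^2\Bigl(\frac{F^2}{y}\Bigr)=\frac{2FF_{yy}}{y}+\frac{2(yF_y-F)^2}{y^3}.
\]
This is checked by expanding $(yF_y-F)^2$. Then I use the identity
\[
yL_y-L=\frac{y^2}{r_1}+\frac{y^2}{r_2}-1-r_1-r_2=-\Bigl(1+\frac{x^2}{r_1}+\frac{(x-1)^2}{r_2}\Bigr),
\]
which follows from $y^2/r_1-r_1=-x^2/r_1$ and $y^2/r_2-r_2=-(x-1)^2/r_2$. Multiplying by $-\pi^2/8$ and factoring out $y^{-3}$ yields \eqref{eq:Ryy1-xy}.

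The only step that needs care beyond bookkeeping is the regularity of $\lambda_1^p$ in $p$, which I settle via the simplicity of the first eigenvalue and analytic perturbation theory. The main risk of error in the computation is this last regrouping for $R^{(1)}_{yy}$, which I will check by direct expansion.
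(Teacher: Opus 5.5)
Your proposal is correct and follows the same route as the paper, whose proof only says that the identities follow from direct computation. Your reductions $J_1=\tfrac{y}{2}\lambda_1^p-\tfrac{\pi^2}{8}L^2/y-\text{const}$ and $J_2=\tfrac{y}{2}\lambda_1^p-\tfrac{C_*}{2}Q^2/y$, your formula for $\partial_y^2(F^2/y)$, and your regrouping via $yL_y-L=-\bigl(1+x^2/r_1+(x-1)^2/r_2\bigr)$ are all correct, and your justification that $p\mapsto\lambda_1^p$ is differentiable via analytic perturbation of the simple first eigenvalue supplies a point the paper leaves implicit.
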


\begin{proof}
The identities follow from direct computation.
\end{proof}

\textbf{Step 1--2: Convexity in the \(x\)-direction}

Define the grid points by
\[
p_{ij}
=
\left(
\frac{1}{2}+2\varepsilon_{\mathrm{up}}\frac{i}{N_x},
\,
\frac{\sqrt3}{2}-\varepsilon_{\mathrm{up}}\frac{j}{N_y}
\right),
\qquad
i=0,1,\ldots,N_x,
\quad
j=0,1,\ldots,N_y.
\]
For \(i=0,1,\ldots,N_x-1\) and \(j=0,1,\ldots,N_y-1\), let \(R_{ij}\) denote the rectangle with vertices
\[
p_{ij},
\quad
p_{i+1,j},
\quad
p_{i,j+1},
\quad
p_{i+1,j+1}.
\]
Then \(\{R_{ij}\}\) forms a partition of
\[
\left[\frac{1}{2},\frac{1}{2}+2\varepsilon_{\mathrm{up}}\right]
\times
\left[\frac{\sqrt3}{2}-\varepsilon_{\mathrm{up}},\frac{\sqrt3}{2}\right],
\]
and hence, in particular, covers \(\Omega_{\mathrm{up}}\).
In the computations below, we take
\[
\varepsilon_{\mathrm{up}}=0.122,
\qquad
N_x=20,
\qquad
N_y=100.
\]

To obtain lower bounds for
\[
\frac{\partial^2 J_k}{\partial x^2}
\qquad
(k=1,2)
\]
on each \(R_{ij}\), we apply the lower estimate for the second-order shape derivative from Theorem~\ref{thm:main-theorem-est} together with Lemma~\ref{lem:Ji-xx-yy-simpler-nopA}, as implemented in the following algorithm.

\begin{framed}
  \captionsetup{type=algorithm}
  \captionof{algorithm}{Certification of the sign of $\frac{\partial^2 J_k}{\partial x^2}$ $(k=1,2)$}
  \label{alg:certify-d2Jdx2}

  \begin{algorithmic}
    \State \textbf{Input:} Rectangular domain $R_{ij}$
    \State \textbf{Output:} Rigorous lower bounds $L_{ij}^{(k)}$ for $\frac{\partial^2 J_k}{\partial x^2}$ $(k=1,2)$ on $R_{ij}$

    \State
    \State \textbf{[Eigenvalue estimation]}

    \State Over $\triangle^{p_{ij}}$, compute enclosures of the eigenvalues $\lambda_k^{p_{ij}}$ $(k=1,\cdots,N_{\texttt{spec}}+1)$ using Lemma \ref{lem:est-tau}.

    \State Using \eqref{eq:perturbation-first} with $p=p_{ij}$ and $\tilde p\in R_{ij}$, estimate the range of $\lambda_k^p$ $(k=1,\cdots,N_{\texttt{spec}}+1)$ over $R_{ij}$ and obtain the uniform bound
    \begin{equation}\label{eq:uniform-estimate-lam-ij}
      \lambda_{\min}\left([S^{-1}S^{-\intercal}]\right)\cdot \lambda_k^{p_{ij}}
      \le \lambda_k^{\tilde p}
      \le \lambda_{\max}\left([S^{-1}S^{-\intercal}]\right)\cdot \lambda_k^{p_{ij}},
    \end{equation}
    where the interval matrix $[S^{-1}S^{-\intercal}]$ is defined by
    \begin{align*}
      [S^{-1}S^{-\intercal}]
      :=\{S^{-1}S^{-\intercal}\mid S:\triangle^{p_{ij}}\to \triangle^{\tilde p}
      \text{ is a linear map},\ \tilde p\in R_{ij}\}.
    \end{align*}
    Here, $\lambda_{\min}(\cdot)$ and $\lambda_{\max}(\cdot)$ denote the minimum and maximum eigenvalues of an interval matrix, respectively. Let
    \begin{equation*}
      [\lambda_k]
      :=\left\{x\in\mathbb{R}\,\middle|\,
      \lambda_{\min}\left([S^{-1}S^{-\intercal}]\right)\cdot\lambda_k^{p_{ij}}
      \le x \le
      \lambda_{\max}\left([S^{-1}S^{-\intercal}]\right)\cdot\lambda_k^{p_{ij}}
      \right\}
      \qquad (k=1,\cdots,N_{\texttt{spec}}+1).
    \end{equation*}

    \State Verify that $\lambda_{N_{\texttt{spec}}}^p$ and $\lambda_{N_{\texttt{spec}}+1}^p$ are separated for every $p\in R_{ij}$, namely,
    \begin{equation} [\lambda_{N_{\texttt{spec}}}]<[\lambda_{N_{\texttt{spec}}+1}].
    \end{equation}

    \State
    \State \textbf{[Eigenfunction estimation]}

    \State Over $\triangle^{p_{ij}}$, compute the approximate eigenfunctions
    \[
      \hat{u}_k^{p_{ij}}
      = \alpha_{1,k}\phi_1^{p_{ij}}+\cdots+\alpha_{N_{\texttt{max}},k}\phi_{N_{\texttt{max}}}^{p_{ij}}
      \qquad (k=1,\cdots,N_{\texttt{spec}}+1),
    \]
    where $\phi_1^{p_{ij}},\cdots,\phi_{N_{\texttt{max}}}^{p_{ij}}$ form a basis of the FEM space $V_h(\triangle^{p_{ij}})$ and $(\alpha_{1,k},\cdots,\alpha_{N_{\texttt{max}},k})$ is the corresponding coefficient vector.

    \State For each $p\in R_{ij}$, define the approximate eigenfunctions on $\triangle^p$ by
    \[
      \hat{u}_k^p
      := \alpha_{1,k}\phi_1^p+\cdots+\alpha_{N_{\texttt{max}},k}\phi_{N_{\texttt{max}}}^p
      \qquad (k=1,\cdots,N_{\texttt{spec}}+1).
    \]

    \State Define the approximate eigenvalues $\hat{\lambda}_k^p$ $(k=1,\cdots,N_{\texttt{spec}})$ by
    \begin{equation*}
      \hat{\lambda}_k^p
      :=\min_{\widehat{V}_{\texttt{spec}}^{(k)}\subset \widehat{V}_{\texttt{spec}}}
      \max_{\hat v\in \widehat{V}_{\texttt{spec}}^{(k)}}
      \frac{\|\nabla \hat v\|_{\triangle^p}^2}{\|\hat v\|_{\triangle^p}^2},
    \end{equation*}
    where
    \[
      \widehat{V}_{\texttt{spec}}
      := \mathrm{span}\{\hat{u}_1^p,\cdots,\hat{u}_{N_{\texttt{spec}}}^p\},
    \]
    and $\widehat{V}_{\texttt{spec}}^{(k)}$ is a $k$-dimensional subspace of $\widehat{V}_{\texttt{spec}}$. Let
    \[
      [\hat{\lambda}_k]:=\bigcup_{p\in R_{ij}}\hat{\lambda}_k^p.
    \]

    \State Apply Algorithm \ref{alg:eigen-errors} to obtain uniform upper bounds for $\mathrm{Est}_a(u_k^p,\hat{u}_k^p)$ for $p\in R_{ij}$ and $k=1,\cdots,N_{\texttt{spec}}+1$. Denote by $\overline{\mathrm{Est}_a(u_k^p,\hat{u}_k^p)}$ the corresponding uniform upper bound on $R_{ij}$.

    \State
    \State \textbf{[Lower bound for $\frac{\partial^2 \lambda_1^p}{\partial x^2}$]}

    \State Using \eqref{eq:uniform-estimate-lam-ij}, compute the range of
    \begin{equation}
      \widehat{(\ddot\lambda_{1,N_{\texttt{spec}}}^p)}
      =
      \left(\ddot P\nabla \hat u_1^p,\nabla \hat u_1^p\right)_{\triangle^p}
      +2\sum_{k=2}^{N_{\texttt{spec}}}
      \frac{\left|\ip{\dot P\nabla \hat u_1^p}{\nabla \hat u_k^p}\right|^2}{\lambda_k^p-\lambda_1^p}
    \end{equation}
    over $R_{ij}$. Let
    \[
      [\widehat{(\ddot\lambda_{1,N_{\texttt{spec}}}^p)}]
      := \bigcup_{p\in R_{ij}}
      \left\{\widehat{(\ddot\lambda_{1,N_{\texttt{spec}}}^p)}\right\}.
    \]

    \State Let $[\dot P]$ and $[\ddot P]$ be the interval matrices defined by
    \[
      [\dot P]:=\bigcup_{p\in E_{ij}}\dot P,
      \qquad
      [\ddot P]:=\bigcup_{p\in E_{ij}}\ddot P.
    \]

    \State Apply Theorem \ref{thm:main-theorem-est} to obtain the uniform lower bound
    \begin{align}\label{eq:main-theorem-est-applied-1}
      \frac{\partial^2 \lambda_1^p}{\partial x^2}
      \ge\;&
      [\widehat{(\ddot\lambda_{1,N_{\texttt{spec}}}^p)}]
      -\|[\ddot P]\|_2\left(\sqrt{[\lambda_1]}+\sqrt{[\hat\lambda_1]}\right)
      \overline{\mathrm{Est}_a(u_1^p,\hat{u}_1^p)} \notag\\
      &\quad
      -2\sum_{k=2}^{N_{\texttt{spec}}}
      \frac{[\mathcal{A}_{1k}]}{|[\lambda_k]-[\lambda_1]|}
      \qquad (=: [L_{xx}]),
    \end{align}
    where
    \begin{align*}
      [\mathcal{A}_{1k}]
      :=\|[\dot P]\|_2^2
      \left(
      \overline{\mathrm{Est}_a(u_1^p,\hat{u}_1^p)}\sqrt{[\hat\lambda_k]}
      +\sqrt{[\lambda_1]}\,\overline{\mathrm{Est}_a(u_k^p,\hat{u}_k^p)}
      \right)
      \left(
      \sqrt{[\lambda_1][\lambda_k]}+\sqrt{[\hat\lambda_1][\hat\lambda_k]}
      \right).
    \end{align*}

    \State
    \State \textbf{[Lower bound for $\frac{\partial^2 J_k}{\partial x^2}$]}

    \State Obtain the uniform lower bound for $\frac{\partial^2 J_k}{\partial x^2}$ on $R_{ij}$ by
    \begin{equation*}
      \frac{\partial^2 J_k}{\partial x^2}(x,y)
      \ge
      \frac{1}{2}\inf \bigl([y_j,y_{j+1}]\,[L_{xx}]\bigr)
      + \inf_{(x,y)\in R_{ij}} R_{xx}(x,y)
      \qquad (=:L_{ij}^{(k)}).    \end{equation*}
  \end{algorithmic}
\end{framed}

\medskip
With the choice \(N_{\mathrm{spec}}=1\), we obtain
\[
  \inf_{i,j} L_{ij}^{(1)} \ge 10.65 \ (>0), \qquad
  \inf_{i,j} L_{ij}^{(2)} \ge 10.66 \ (>0).
\]
Hence,
\[
  \frac{\partial^2 J_k}{\partial x^2} > 0
  \qquad \text{in } \Omega_{\mathrm{up}}~~(k=1,2),
\]
which shows that \(J_k\) is strictly convex in the \(x\)-direction on \(\Omega_{\mathrm{up}}\) for \(k=1,2\).
By symmetry, the minimizer in \(\Omega_{\mathrm{up}}\) lies on the symmetry axis \(x=\tfrac12\).

\begin{remark}
In the above setting, we choose $N_{\mathrm{spec}}=1$,
since error bounds for higher eigenfunctions may become unstable for the clustered eigenvalues in the neighborhood of $p_0$. When eigenvalues are clustered, the corresponding
eigenfunctions are highly sensitive to perturbations, and the estimates of
Liu--Vejchodsk\'y~\cite{liu2022fully} may be too large to verify positivity of
the second-order derivative. The same issue may occur in the neighborhood of the diabolical
points of $\Omega$, where eigenvalues nearly coincide; see
Berry--Wilkinson~\cite{berry1984diabolical}. This choice $N_{\mathrm{spec}}=1$ avoids the unstable
higher modes, while the omitted tail of the spectral expansion is nonnegative
and therefore may be discarded without affecting the rigorous lower bound.
\end{remark}

\medskip

\textbf{Step 1-3: Convexity in the $y$-direction}

Next, we estimate the sign of $\frac{\partial^2 J_k}{\partial y^2}$ over the segment $I^{y}$ defined by
\[I^{y}:=\left\{(x,y)\in \mathbb{R}^2~:~
   x=1/2,~~y\in \Bigl[\frac{\sqrt3}{2}-\varepsilon_{\mathrm{up}},\ \frac{\sqrt3}{2}\Bigr]
   \right\}.
\]
For $m=1,\ldots,N_y+1$, let
\begin{equation}
    y_m
    =
    \frac{\sqrt3}{2}-\varepsilon_{\mathrm{up}}
    +\frac{m-1}{N_y}\varepsilon_{\mathrm{up}},
    \qquad
    p_m:=(1/2,y_m).
\end{equation}
Let $\{I^{y}_m\}_{m=1}^{N_y}$ be the subdivision of $I^y$ defined by
\[
I^{y}_m
:=
\left\{(x,y)\in \mathbb{R}^2~:~
   x=1/2,\quad y\in [y_m,y_{m+1}]
   \right\}
\qquad (m=1,\ldots,N_y),
\]
with $\varepsilon_{\mathrm{up}}=0.122$ and $N_y=200$.

\medskip

\begin{framed}
  \captionsetup{type=algorithm}
  \captionof{algorithm}{Certification of the sign of $\frac{\partial^2 J_k}{\partial y^2}$}
  \label{alg:certify-d2Jdy2}

  \begin{algorithmic}
    \State \textbf{Data:} Interval $I_m^y$
    \State \textbf{Result:} Rigorous lower bounds $L_{m}^{(k)}$ for $\frac{\partial^2 J_k}{\partial y^2}$ over each $I_m^y$
    
    \State 
    \State \textbf{[Estimation of eigenvalues]}
    
    \State Over $\triangle^{p_{m}}$, compute the range of eigenvalues $\lambda^{p_{m}}_k$ $(k=1,\cdots,N_{\texttt{spec}}+1)$ using Lemma \ref{lem:est-tau}.
    
    \State Estimate the range of $\lambda_k^{p}$ $(k=1,\cdots,N_{\texttt{spec}}+1)$ over the interval $I_m^y$ using \eqref{eq:perturbation-first} with $p=p_{m}$ and $\tilde{p}\in I_m^y$ and the obtain the uniform bound for all $\lambda_k^{\tilde{p}}$ $(\tilde{p}\in I_m^y)$:
    \begin{equation}\label{eq:uniform-estimate-lam-m}
    \lambda_{\min}\left([S^{-1}S^{-\intercal}]\right)\cdot\lambda^{p_{m}}_k\leq \lambda^{\tilde p}_k\leq\lambda_{\max}\left([S^{-1}S^{-\intercal}]\right)\cdot\lambda^{p_{m}}_k,
    \end{equation}
    where the interval matrix $[S^{-1}S^{-\intercal}]$ is defined by
    \begin{align*}
     [S^{-1}S^{-\intercal}]&:=\{S^{-1}S^{-\intercal}~|~S:T^{p_{m}}\to T^{\tilde p} \mbox{ being a linear map},~~\tilde{p}\in I_m^y\}.
    \end{align*}
    Here, $\lambda_{\min}(\cdot)$ and $\lambda_{\max}(\cdot)$ take the minimum and the maximum eigenvalues of an interval matrix, respectively.
    Let
    \begin{equation*}
    [\lambda_k]:=\{x\in\mathbb{R}~|~\lambda_{\min}\left([S^{-1}S^{-\intercal}]\right)\cdot\lambda^{p_{m}}_k\leq x\leq \lambda_{\max}\left([S^{-1}S^{-\intercal}]\right)\cdot\lambda^{p_{m}}_k\}.
    \end{equation*}
    \State Confirm that $\lambda_{N_{\texttt{spec}}}^p$ and $\lambda_{N_{\texttt{spec}+1}}^p$ are separated for $p\in I_m^y$, i.e.,
\begin{equation}
    [\lambda_{N_{\texttt{spec}}}]<[\lambda_{N_{\texttt{spec}+1}}]
\end{equation}
holds.

    \State 
    \State \textbf{[Estimation of eigenfunctions]}
    
    \State Over $\triangle^{p_{m}}$,  compute the approximate eigenfunctions 
    $$
    \hat{u}_k^{p_{m}} =\alpha_{1,k}\phi_{1}^{p_{m}}+\cdots +\alpha_{N_{\texttt{max}},k}\phi_{N_{\texttt{max}}}^{p_{m}}~~(k=1,\cdots,N_{\texttt{spec}}+1),
    $$
    where $\phi_{1}^{p_{m}},\cdots,\phi_{N_{\texttt{max}}}^{p_{m}}$ are the basis of FEM space $V_h(\triangle^{p_{m}})$ and $(\alpha_{1,k},\cdots,\alpha_{N_{\texttt{max}},k})$ is the coefficient vector.
    
    \State 
    \State For $p\in I_m^y$, define the approximate eigenfunctions $\hat{u}^p_k$ over $\triangle^p$ by    
    $$\hat{u}^p_k:=\alpha_{1,k}\phi_{1}^{p}+\cdots +\alpha_{N_{\texttt{max}},k}\phi_{N_{\texttt{max}}}^{p}~~(k=1,\cdots,N_{\texttt{spec}}+1).
    $$

    \State Define the approximated eigenvalues $\hat{\lambda}_k^p$ $(k=1,\cdots,N_{\texttt{spec}})$ by
    \begin{equation*}        \hat{\lambda}_k^p:=\min_{\widehat{V}^{(k)}_{\texttt{spec}}\subset \widehat{V}_{\texttt{spec}}}\max_{\hat v\in \widehat{V}^{(k)}_{\texttt{spec}} }\frac{\|\nabla \hat{v}\|^2_{\triangle^p}}{\|\hat{v}\|^2_{\triangle^p}},
    \end{equation*}
    where $\widehat{V}_{\texttt{spec}}:=\mbox{span}\{\hat{u}^p_1,\cdots,\hat{u}^p_{N_{\texttt{spec}}}\}$ and $\widehat{V}^{(k)}_{\texttt{spec}}$ is its $k$-dimensional subspace. Let $[\hat{\lambda}_k]:=\cup_{p\in I_m^y}\hat{\lambda}_k^p$.

    \State Apply Algorithm \ref{alg:eigen-errors} and obtain uniform upper bounds of $\mathrm{Est}_{a}(u_k^p ,\hat{u}_k^p)$ for $p\in I_m^y$ and $k=1,\cdots,N_{\texttt{spec}+1}$.
    Let $\overline{\mathrm{Est}_{a}(u_k^p ,\hat{u}_k^p)}$ be the uniform upper bound of $\mathrm{Est}_{a}(u_k^p ,\hat{u}_k^p)$ on $I_m^y$.
  
  \State 
  \State \textbf{[Lower bound of $\frac{\partial^2 \lambda_1^p}{\partial y^2}$]}
  \State Using the estimate \eqref{eq:uniform-estimate-lam-m}, obtain the range of
  \begin{equation}    \widehat{(\ddot\lambda^p_{1,N_{\texttt{spec}}})} = \left(\ddot P\nabla \hat u_1^p,\nabla \hat u_1^p\right)_{\triangle^p} + 2 \sum_{\substack{k=2}}^{N_{\texttt{spec}}} \frac{\left|\ip{\dot{P}\nabla\hat  u_1^p}{\nabla \hat u_k^p}\right|^2}{\lambda_k^p - \lambda_1^p}
\end{equation}
over $I_m^y$. Let $[\widehat{(\ddot\lambda^p_{1,N_{\texttt{spec}}})}]:=\cup_{p\in I_m^y}\left\{\widehat{(\ddot\lambda^p_{1,N_{\texttt{spec}}})}\right\}$.

  \State Let $[\dot P],[\ddot P]$ be the interval matrix defined by $[\dot P]:=\bigcup_{p\in E_{m}}\dot{P}$ and $[\ddot P]:=\bigcup_{p\in E_{m}}\ddot{P}$.
  
  \State Use Theorem \ref{thm:main-theorem-est} to obtain the uniform lower bound of $\frac{\partial^2 \lambda_1^p}{\partial y^2}$:
  \begin{align}\label{eq:main-theorem-est-applied-2}
    \frac{\partial^2 \lambda_1^p}{\partial y^2} \ge & ~~[\widehat{(\ddot\lambda^p_{1,N_{\texttt{spec}}})}]-\left\|[\ddot P]\right\|_{2} \left(\sqrt{[\lambda_1]}+\sqrt{[\hat\lambda_1]}\right) \overline{\mathrm{Est}_{a}(u_1^p ,\hat{u}_1^p)}  - 2 \sum_{\substack{k=2}}^{N_{\texttt{spec}}}  \frac{[\mathcal{A}_{1k}]}{|[\lambda_k] - [\lambda_1]|}~(=:[L_{yy}]),
\end{align}
where the terms $[\mathcal{A}_{1k}]$ are given by
    \begin{align*}
      [\mathcal{A}_{1k}]
      :=\|[\dot P]\|_2^2
      \left(
      \overline{\mathrm{Est}_a(u_1^p,\hat{u}_1^p)}\sqrt{[\hat\lambda_k]}
      +\sqrt{[\lambda_1]}\,\overline{\mathrm{Est}_a(u_k^p,\hat{u}_k^p)}
      \right)
      \left(
      \sqrt{[\lambda_1][\lambda_k]}+\sqrt{[\hat\lambda_1][\hat\lambda_k]}
      \right).
    \end{align*}

\State \textbf{[Lower bound of $\frac{\partial \lambda_1^p}{\partial y}$]}

\State Compute the discrete first-order shape derivative in the $y$-direction
\begin{equation*}
\widehat{(\dot\lambda_1^p)}
:=\bigl(\dot P\nabla \hat u_1^p,\nabla \hat u_1^p\bigr)_{\triangle^p}
\qquad\text{with}\qquad
\dot P=\begin{pmatrix}0 & 0\\ 0 & -2/y\end{pmatrix},
\end{equation*}
and let $[\widehat{(\dot\lambda_1^p)}]:=\bigcup_{p\in I_m^y}\{\widehat{(\dot\lambda_1^p)}\}$.
Apply Lemma~\ref{lem:first-order-error-estimate} to obtain the uniform lower bound
\begin{equation}\label{eq:first-order-est-applied}
  \frac{\partial \lambda_1^p}{\partial y}
  \;\ge\;
  [\widehat{(\dot\lambda_1^p)}]
  -\|[\dot P]\|_2\Bigl(\sqrt{[\lambda_1]}+\sqrt{[\hat\lambda_1]}\Bigr)
  \overline{\mathrm{Est}_a(u_1^p,\hat u_1^p)}
  \qquad (=:[L_y]).
\end{equation}

\State \textbf{[Lower bound of $\frac{\partial^2 J_k}{\partial y^2}$]}

\State Use Lemma \ref{lem:Ji-xx-yy-simpler-nopA} together with~\eqref{eq:first-order-est-applied} to obtain the uniform lower bound for $\frac{\partial^2 J_k}{\partial y^2}$ over $I_m^y$. Then, we obtain
\begin{equation*}
\frac{\partial^2 J_k}{\partial y^2}(x,y)
\geq\frac{1}{2}\inf\!\bigl([y_m,y_{m+1}]\,[L_{yy}]\bigr)
+\inf[L_y] \;+\; \inf_{(x,y)\in I_m^y} R_{yy}^{(k)}(x,y)~(=:L_{m}^{(k)}).
\end{equation*}
\end{algorithmic}
\end{framed}

\medskip

With the choice $N_{\mathrm{spec}}=1$, the rigorous run produced
\[
  \inf_{m}L_{m}^{(1)}\ \ge 3.46 ~(> 0),~~\inf_{m}L_{m}^{(2)}\ \ge 3.27 ~(> 0).
\]
Hence,
\[
\frac{\partial^2 J_k}{\partial y^2} > 0
\quad \text{on } I^y \quad (k=1,2),
\]
which shows that \(J_k\) is strictly convex in the \(y\)-direction on \(I^y\).  
By combining this with the result in Step 1-1, $J_k$ takes its minimum at $(x,y)=(1/2,\sqrt{3}/2)$ in $\Omega_{\mathrm{up}}$.

\medskip\medskip

\subsection{Step 2: Case of $\Omega_{\mathrm{mid}}$}\label{sec:case2}

We estimate the uniform lower bound of the shape functional $J$ over $\Omega_{\mathrm{mid}}$.
Each point $p=(x,y) (\in \Omega_{\mathrm{mid}})$ is uniquely parametrized by $(x, \theta) \in [1/2, 1) \times (0,\pi/3)$ through the relation:
\[
    \theta(x,y) := \arctan(y/x).
\]


To capture the behavior of eigenvalues, we construct a non-uniform grid points  $p_{i,j}=(x_{i},x_{i}\tan\theta_{j})(\in\Omega_{\mathrm{mid}})$.
For a pair $(x,\theta)$, we define a cell $C_{x\theta}$ as the product interval
\[
    C_{x\theta} := [x_{i}, x_{i+1}] \times \left[\theta_j, \theta_{j+1}\right].
\]
Correspondingly, let $C_{xy}$ denote the quadrilateral subregion with vertices
$$
(x_{i},x_{i}\tan\theta_{j}),~
(x_{i},x_{i}\tan\theta_{j+1}),~
(x_{i+1},x_{i+1}\tan\theta_{j}),~
(x_{i+1},x_{i+1}\tan\theta_{j+1}).
$$
Then, the collection $\{C_{xy}\}$ forms a covering of $\Omega_\mathrm{mid}$.
The precise definitions of these cells, along with the associated finite element parameters such as mesh sizes and polynomial orders for each cell, are provided in the dataset \texttt{inputs/cell\_def.csv} in the code repository~\cite{endo2025code}.


Due to the geometric construction of the triangle $T^p$, for any $p \in C_{xy}$, the following domain inclusion holds:
\[
     T^{p_{i,j}}\subset T^p \subset T^{p_{i+1,j+1}}
\]
as illustrated in Figure~\ref{fig:cell-monotonicity}.
Consequently, the monotonicity property of Dirichlet eigenvalues implies
\begin{equation}\label{eq:cell-monotonicity}
    \lambda_k(T^{p_{i+1,j+1}}) \le \lambda_k(T^p) \le \lambda_k(T^{p_{i,j}}), \quad \forall p \in C_{xy}.
\end{equation}
\begin{figure}[H]
    \centering
    \begin{tikzpicture}[scale=1.0]

  \coordinate (O) at (0,0);    
  \coordinate (B) at (6,0);    
  \coordinate (T) at (5,3);    
  \coordinate (P) at (2.5,0.6);
  \coordinate (Q) at (3.0,1.2);

  \coordinate (A) at (2.5,1.5); 
  \coordinate (D) at (5,1.2);   

  \draw (O) -- (B) -- (T) -- cycle;

  \draw (O) -- (B) -- (Q) -- cycle;

  \draw (O) -- (P) -- (B);

  \draw[very thick]
        (P) -- (A) -- (T) -- (D) -- cycle;

  \node[below left]  at (O) {$\left(0,0\right)$};
  \node[below right] at (B) {$\left(1,0\right)$};

  \node[below]       at (P) {$p_{i,j}$};
  \node[above right] at (T) {$p_{i+1,j+1}$};
  \node[above right] at (Q) {$p$};

  \node at (4.0,1.5) {$C_{i,j}$};

\end{tikzpicture}
      \caption{Parameter cell $\mathcal{C}_{i,j}$.}
      \label{fig:cell-monotonicity}
\end{figure}

For $p=(x,y)$ and $\Lambda\in\mathbb{R}$, we introduce auxiliary functionals
$\mathcal{B}_1,\mathcal{B}_2$ corresponding to $J_1,J_2$ by
\begin{align}
\label{eq:def-B1}
\mathcal{B}_1(p;\Lambda)
&:= \Lambda\,|\triangle^p|
-\frac{\pi^2}{16}\frac{|\partial\triangle^p|^2}{|\triangle^p|}
-\frac{7\sqrt{3}\pi^2}{12},\\
\label{eq:def-B2}
\mathcal{B}_2(p;\Lambda)
&:= \Lambda\,|\triangle^p|
-\frac{4\pi^2}{\left(3+\sqrt{\pi\sqrt3}\right)^2}\,
\frac{\left(|\partial\triangle^p|+\sqrt{4\pi|\triangle^p|}\right)^2}{4~|\triangle^p|}.
\end{align}
Note that $J_k(\triangle^p)=\mathcal{B}_k\!\left(p;\lambda_1(\triangle^p)\right)$ for $k=1,2$.

\begin{lemma}\label{lem:bounds-over-Rij}
Let $\mathcal{C}_{ij}$ be a verification cell in $\Omega_{\mathrm{mid}}$ with vertices
$p_{i,j},\,p_{i+1,j},\,p_{i,j+1},\,p_{i+1,j+1}$ as in the construction above.
Then for each $k\in\{1,2\}$ and every $p\in\mathcal{C}_{ij}$ we have the uniform bounds
\begin{align}
\label{eq:bounds-over-Cij-Jk}
\mathcal{B}_k\!\left(p_{i,j};\,\lambda_1(\triangle^{p_{i+1,j+1}})\right)
\;\le\;
J_k(\triangle^p)
\;\le\;
\mathcal{B}_k\!\left(p_{i+1,j+1};\,\lambda_1(\triangle^{p_{i,j}})\right).
\end{align}
\end{lemma}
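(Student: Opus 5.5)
The plan is to separate the dependence of $J_k(\triangle^p)=\mathcal{B}_k(p;\lambda_1(\triangle^p))$ on the eigenvalue argument $\Lambda$ from its dependence on the geometric point $p$. Each of these will be controlled by a monotonicity property, and the two will then be chained together.

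\textbf{Step 1 (monotonicity in $\Lambda$).} In both \eqref{eq:def-B1} and \eqref{eq:def-B2}, $\Lambda$ enters only through the term $\Lambda|\triangle^p|$, and $|\triangle^p|=y/2>0$. Hence $\Lambda\mapsto\mathcal{B}_k(p;\Lambda)$ is strictly increasing for each fixed $p$. For $p\in\mathcal{C}_{ij}$, the inclusion $\triangle^{p_{i,j}}\subset\triangle^p\subset\triangle^{p_{i+1,j+1}}$ gives \eqref{eq:cell-monotonicity} with $k=1$. Therefore
\[
\mathcal{B}_k\bigl(p;\lambda_1(\triangle^{p_{i+1,j+1}})\bigr)\le J_k(\triangle^p)\le \mathcal{B}_k\bigl(p;\lambda_1(\triangle^{p_{i,j}})\bigr).
\]

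\textbf{Step 2 (moving the geometric argument to the vertices).} It remains to replace $p$ by $p_{i,j}$ in the left-hand bound and by $p_{i+1,j+1}$ in the right-hand bound, with $\Lambda$ held fixed. I write
\[
\mathcal{B}_k(p;\Lambda)=\Lambda|\triangle^p|-G_k(p)-\text{const},
\]
where $G_1=\frac{\pi^2}{16}|\partial\triangle^p|^2/|\triangle^p|$ and $G_2=C_*\bigl(|\partial\triangle^p|+\sqrt{4\pi|\triangle^p|}\bigr)^2/(4|\triangle^p|)$.

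The area term is handled by the same domain inclusion: $|\triangle^{p_{i,j}}|\le|\triangle^p|\le|\triangle^{p_{i+1,j+1}}|$. Since $\Lambda>0$, this term moves in the required direction.

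For the geometric penalty, the two desired inequalities are exactly
\[
G_k(p_{i+1,j+1})\le G_k(p)\le G_k(p_{i,j}).
\]
I would check these by writing $G_k$ in the cell coordinates $(x,\theta)$, with $y=x\tan\theta$. Then I would differentiate and establish $\partial_\theta G_k\le0$ and $\partial_x G_k\le 0$ on each cell, using the explicit $r_1,r_2$ expressions from Lemma~\ref{lem:Ji-xx-yy-simpler-nopA}. These are elementary functions, and the verification can be done cellwise with interval arithmetic if no clean analytic sign is available.

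\textbf{Main obstacle.} I expect Step 2 to be the hard part, specifically the $x$-monotonicity of $G_k$. Increasing $\theta$ enlarges the triangle and makes it closer to equilateral, so $G_k$ should decrease in that direction. Increasing $x$ at fixed $\theta$, however, also moves $p$ away from the symmetry axis $x=1/2$, and the isoperimetric-type ratio need not decrease there.

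If the sign of $\partial_xG_k$ cannot be certified on some cell, the fallback would be to use $\Lambda|\triangle^{p_{i+1,j+1}}|$ together with a cellwise interval bound on $G_k$. However, that establishes a different bound from the one stated in the lemma, so the plan relies on the derivative check succeeding on every cell of the covering. Once Step 2 holds, combining it with Step 1 yields \eqref{eq:bounds-over-Cij-Jk}.
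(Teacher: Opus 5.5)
\textbf{Gap: the monotonicity in Step 2 is never proved.} Your Step 1, and your reduction of Step 2 to showing $G_k$ nonincreasing in $x$ and $\theta$, match the paper's structure. The paper differentiates $\widetilde{\mathcal B}_k(x,\theta;\Lambda)$ as a whole, but its derivative is exactly your nonnegative area contribution plus $-\partial G_k$. So the whole nontrivial content of the lemma is the monotonicity of the perimeter penalty, and that is the part you do not establish. You leave it as a plan (differentiate, or check cellwise with interval arithmetic) and suspect that $\partial_x G_k\le 0$ may fail. Your stated fallback proves a different inequality. As written, the proposal does not prove the lemma.

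\textbf{Monotonicity in $x$.} The missing step is a pair of exact sign identities. Set $t=\tan\theta$, $s=\sec\theta$, $a=xs=r_1$, $b=r_2$, $P=1+a+b$ and $|\triangle^p|=xt/2$. Then $G_1=\frac{\pi^2}{8}P^2/(xt)$ and $G_2=C_0H^2/(2xt)$, with $H=P+\sqrt{2\pi xt}$ and $C_0=4\pi^2/(3+\sqrt{\pi\sqrt3})^2$. A direct computation gives
\[
\partial_x G_1=-\frac{\pi^2}{8}\,\frac{P\,(P-2xP_x)}{x^2t},\qquad
\partial_x G_2=-C_0\,\frac{H\,(P-2xP_x)}{2x^2t}.
\]
The $\sqrt{4\pi|\triangle^p|}$ term drops out of the sign. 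The key identity is $b\,(P-2xP_x)=(1-a)P$. Hence $x$-monotonicity holds exactly because $a=|p|\le 1$ on $\Omega$, so your worry about moving away from the symmetry axis is unfounded.

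\textbf{Monotonicity in $\theta$.} Here $\partial_\theta G_1=-(1+t^2)\frac{\pi^2}{8}\frac{PD}{xt^2}$ and $\partial_\theta G_2=-(1+t^2)C_0\frac{HD}{2xt^2}$, where $D:=P-2tP_t=P-2y^2(1/a+1/b)$. To show $D\ge 0$, use Heron's formula in the variables $u=a+b$, $v=ab$: $4y^2=(u^2-1)(1-u^2+4v)$. Then $D\ge0$ becomes $E(u,v):=(1+u)v-\frac u2(u^2-1)(1-u^2+4v)\ge0$. This is affine in $v$ on $[u-1,u^2/4]$; the lower endpoint is valid because $v-u+1=(1-a)(1-b)\ge0$. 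At the endpoints, $E(u,u-1)=\frac12(u-2)(u-1)(u+1)(u^2-2u-1)\ge0$ and $E(u,u^2/4)=\frac14 u(u+1)(2-u)\ge0$ for $1\le u\le 2$.

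Supplying these computations, or actually carrying out a certified cellwise sign check, is what is needed to close Step 2.
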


\begin{proof}
Fix $k\in\{1,2\}$ and let $p=(x,y)\in\mathcal{C}_{ij}$.
By the geometric construction of the cells (cf.\ \eqref{eq:cell-monotonicity}),
\[
\triangle^{p_{i,j}}\subset \triangle^{p}\subset \triangle^{p_{i+1,j+1}}.
\]
Hence, by domain monotonicity of the first Dirichlet eigenvalue,
\begin{equation}\label{eq:eig-mono-k}
\lambda_1(\triangle^{p_{i+1,j+1}})
\le
\lambda_1(\triangle^{p})
\le
\lambda_1(\triangle^{p_{i,j}}).
\end{equation}
Since $\mathcal{B}_k(p;\Lambda)$ is affine in $\Lambda$ with positive coefficient
$|\triangle^p|=y/2>0$, the map
\[
\Lambda\longmapsto \mathcal{B}_k(p;\Lambda)
\]
is strictly increasing for each fixed $p$.
Applying this to \eqref{eq:eig-mono-k}, we obtain
\begin{equation}\label{eq:Bk-eig-sandwich}
\mathcal{B}_k\!\left(p;\lambda_1(\triangle^{p_{i+1,j+1}})\right)
\le
\mathcal{B}_k\!\left(p;\lambda_1(\triangle^{p})\right)
\le
\mathcal{B}_k\!\left(p;\lambda_1(\triangle^{p_{i,j}})\right).
\end{equation}
Since
\[
J_k(\triangle^p)=\mathcal{B}_k\!\left(p;\lambda_1(\triangle^{p})\right),
\]
it remains to estimate the two outer terms uniformly for $p\in\mathcal{C}_{ij}$.

For fixed $\Lambda\ge0$, set
\[
\widetilde{\mathcal{B}}_k(x,\theta;\Lambda)
:=
\mathcal{B}_k\!\left((x,x\tan\theta);\Lambda\right).
\]
We write
\[
t=\tan\theta,\qquad s=\sec\theta,\qquad
a=xs,\qquad b=\sqrt{1-2x+x^2s^2},\qquad P=1+a+b.
\]
On $\Omega_{\mathrm{mid}}$ we have $t>0$, $a\le1$, and $b\le1$. Moreover,
\[
P_x=s+\frac{xs^2-1}{b},
\qquad
b(P-2xP_x)=(1-a)P,
\]
and hence $P-2xP_x\ge0$. Thus
\begin{align*}
\frac{\partial}{\partial x}\widetilde{\mathcal{B}}_1(x,\theta;\Lambda)
&=
\frac{\Lambda t}{2}
+\frac{\pi^2}{8}\frac{P}{x^2t}(P-2xP_x)
\ge0.
\end{align*}
Letting
\[
C_0:=\frac{4\pi^2}{\left(3+\sqrt{\pi\sqrt3}\right)^2},
\qquad
H:=P+\sqrt{2\pi xt},
\]
we have
\begin{align*}
\frac{\partial}{\partial x}\widetilde{\mathcal{B}}_2(x,\theta;\Lambda)
&=
\frac{\Lambda t}{2}
+C_0\frac{H}{2x^2t}(P-2xP_x)
\ge0.
\end{align*}

It remains to check monotonicity with respect to $\theta$. Since
\[
P_t=\frac{xt}{s}+\frac{x^2t}{b},
\qquad
D:=P-2tP_t=P-2x^2t^2\left(\frac1a+\frac1b\right),
\]
we first show that $D\ge0$. Put $u=a+b$ and $v=ab$. By Heron's formula for the triangle with side lengths $1,a,b$,
\[
4x^2t^2=(u^2-1)(1-u^2+4v).
\]
Since $a,b\le1$, we have $1\le u\le2$ and
\[
u-1\le v\le\frac{u^2}{4}.
\]
The inequality $D\ge0$ is equivalent to
\[
E(u,v):=(1+u)v-\frac{u}{2}(u^2-1)(1-u^2+4v)\ge0.
\]
The function $E$ is affine in $v$, and at the endpoints one has
\[
E(u,u-1)
=
\frac{(u-2)(u-1)(u+1)(u^2-2u-1)}{2}
\ge0,
\]
\[
E\left(u,\frac{u^2}{4}\right)
=
-\frac{u(u-2)(u+1)}{4}
\ge0
\]
for $1\le u\le2$. Hence $D\ge0$. Consequently,
\begin{align*}
\frac{\partial}{\partial\theta}\widetilde{\mathcal{B}}_1(x,\theta;\Lambda)
&=
(1+t^2)
\left[
\frac{\Lambda x}{2}
+\frac{\pi^2}{8}\frac{PD}{xt^2}
\right]
\ge0,\\
\frac{\partial}{\partial\theta}\widetilde{\mathcal{B}}_2(x,\theta;\Lambda)
&=
(1+t^2)
\left[
\frac{\Lambda x}{2}
+C_0\frac{HD}{2xt^2}
\right]
\ge0.
\end{align*}

Therefore, for each fixed $\Lambda\ge0$, the function
$(x,\theta)\mapsto\widetilde{\mathcal{B}}_k(x,\theta;\Lambda)$ is increasing in both variables on each verification cell. Hence, for every $p\in\mathcal{C}_{ij}$,
\begin{equation}\label{eq:Bk-geom-bounds}
\mathcal{B}_k(p_{i,j};\Lambda)
\le
\mathcal{B}_k(p;\Lambda)
\le
\mathcal{B}_k(p_{i+1,j+1};\Lambda).
\end{equation}
Applying \eqref{eq:Bk-geom-bounds} with
$\Lambda=\lambda_1(\triangle^{p_{i+1,j+1}})$ to the left-hand side of
\eqref{eq:Bk-eig-sandwich}, and with
$\Lambda=\lambda_1(\triangle^{p_{i,j}})$ to the right-hand side of
\eqref{eq:Bk-eig-sandwich}, we obtain
\[
\mathcal{B}_k\!\left(p_{i,j};\,\lambda_1(\triangle^{p_{i+1,j+1}})\right)
\le
\mathcal{B}_k\!\left(p;\lambda_1(\triangle^{p})\right)
\le
\mathcal{B}_k\!\left(p_{i+1,j+1};\,\lambda_1(\triangle^{p_{i,j}})\right),
\]
which is exactly \eqref{eq:bounds-over-Cij-Jk}.
\end{proof}

To compute the uniform bounds of $J$ in each $\mathcal{C}_{ij}$, we employ the following algorithm, which implements the estimates derived in Lemma \ref{lem:bounds-over-Rij}:

\begin{framed}
  \captionsetup{type=algorithm}
  \captionof{algorithm}{Verified bounds of $J_i(\triangle^{p})~~(i=1,2)$ over the cell $\mathcal{C}_{ij}$}
  \label{algorithm-3}

  \begin{algorithmic}
    \State \textbf{Data:} Verification cell $\mathcal{C}_{ij}$ with vertices $\{p_{i,j}, p_{i+1,j}, p_{i,j+1}, p_{i+1,j+1}\}$.
    \State \textbf{Result:} A lower bound $L_{ij}$ of $J_1(\triangle^p),J_2(\triangle^p)$ for all $p \in \mathcal{C}_{ij}$.
    \State
    \State \textbf{Eigenvalue Estimation}
    \State Compute the rigorous lower bound of the eigenvalue at $\triangle^{p_{i+1,j+1}}$ and the upper bound at the triangle $\triangle^{p_{i,j}}$ using Lemma \ref{lem:est-tau}:
    \begin{equation}
        \lambda_{\min}^{\text{low}} := \underline{\lambda}_1(\triangle^{p_{i+1,j+1}}).
    \end{equation}
    \State
    \State \textbf{Functional Evaluation}
    \State Compute the uniform bound for $J_i~~(i=1,2)$ using the auxiliary functional $\mathcal{B}$:
    \begin{align}
        L_{ij} &= \mathcal{B}(p_{i,j}; \lambda_{\min}^{\text{low}}).
    \end{align}
  \end{algorithmic}
\end{framed}

As a result of computation, we obtain the following lower bound:
\begin{equation*}
J_1(\triangle^p)\geq 6.85\cdot 10^{-7},~~J_2(\triangle^p)\geq1.23\cdot 10^{-5}~~\forall p\in \Omega_{\mathrm{mid}}.
\end{equation*}

\subsection{Step 3: Case of $\Omega_{\mathrm{down}}$}\label{sec:case3}

In this subsection, we show that neither $J_1$ nor $J_2$ attains its global minimum in the degenerate region $\Omega_{\mathrm{down}}$.

\medskip\medskip

\textbf{Positivity of $J_1$}

To show the positivity of $J_1$ over $\Omega_{\mathrm{down}}$, let us recall a lower bound for the Dirichlet eigenvalues of thin triangles derived in \cite{Endo-Liu-2025-JDE-degenerate}.

\begin{theorem}[Theorem 3.5 of \cite{Endo-Liu-2025-JDE-degenerate}]\label{Thm:Airy-bound}
Let $\widetilde{\triangle}^{(s,t)}$ be the triangle with vertices $(-1,0)$, $(1,0)$, and $(s,t)$.
For any $t \in (0,t_0]$, the $k$-th Dirichlet eigenvalue satisfies
\begin{equation}\label{eq:Airy-general-bound}
  t^{4/3}\left(\lambda_k\bigl(\widetilde{\triangle}^{(s,t)}\bigr)-\frac{\pi^2}{t^2}\right)
  \ge
  \frac{(2\pi^2)^{2/3}\kappa_k(s)}
  {1+\frac{t_0^{2/3}}{3\pi^2}(2\pi^2)^{2/3}\kappa_k(s)},
\end{equation}
where $\kappa_k(s)$ is the $k$-th positive solution to the implicit equation
$f_s(\kappa)=0$ defined by
\begin{align}\label{eq:airy-eigen}
  f_s(\kappa)
  &:= \sqrt[3]{1+s}\,\mathcal{A}\bigl((1+s)^{2/3}\kappa\bigr)\,
      \mathcal{A}'\bigl((1-s)^{2/3}\kappa\bigr) \\
  &\quad
      + \sqrt[3]{1-s}\,\mathcal{A}\bigl((1-s)^{2/3}\kappa\bigr)\,
      \mathcal{A}'\bigl((1+s)^{2/3}\kappa\bigr). \nonumber
\end{align}
Here, $\mathcal{A}(u):=\mathrm{Ai}(-u)$ is the reversed Airy function.
\end{theorem}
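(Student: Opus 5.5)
The plan is to reduce the two-dimensional Dirichlet problem on the thin triangle $\widetilde{\triangle}^{(s,t)}$ to a one-dimensional Schr\"odinger operator in the horizontal variable, and then compare that operator with an explicitly solvable Airy problem.

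\emph{Step 1 (fibre-wise reduction).} For $x\in(-1,1)$ let $h(x)$ be the length of the vertical section of $\widetilde{\triangle}^{(s,t)}$. It equals $t(1+x)/(1+s)$ for $x\le s$ and $t(1-x)/(1-s)$ for $x\ge s$. For $u\in H^1_0(\widetilde{\triangle}^{(s,t)})$, the one-dimensional Dirichlet Poincar\'e inequality on each vertical section gives
\[
\int |\partial_y u|^2 \ge \int \frac{\pi^2}{h(x)^2}u^2 .
\]
Hence
\[
\|\nabla u\|^2 \ge \int\!\!\int \Bigl(|\partial_x u|^2 + V(x)u^2\Bigr),\qquad V:=\pi^2/h^2 .
\]
Each horizontal line meets the triangle in an interval, and $u$ restricted to that line lies in $H^1_0$ of the interval. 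Extending by zero to $(-1,1)$ and using domain monotonicity in one dimension, we get $\lambda_k \ge \mu_k$, where $\mu_k$ is the $k$-th eigenvalue of $-d^2/dx^2+V$ on $(-1,1)$. (For $k>1$ I would argue with min--max applied to the tensor-type lower form. This step is routine for $k=1$ and needs a little care for general $k$.)

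\emph{Step 2 (linearisation of the potential).} Write $V-\pi^2/t^2$ in terms of the distance from $s$. On the left branch it equals
\[
\frac{\pi^2}{t^2}\bigl((1-z)^{-2}-1\bigr),\qquad z=\frac{s-x}{1+s},
\]
and analogously on the right. I would bound it below by a piecewise linear (or slightly sub-linear) potential with slopes $2\pi^2/(t^2(1\pm s))$ on the two sides. I would then rescale $x-s=(t^2/2\pi^2)^{1/3}\xi$. This pulls out exactly the factor $t^{-4/3}(2\pi^2)^{2/3}$ and turns the problem on each side into an Airy equation whose solutions are $\mathcal{A}$ evaluated at arguments proportional to $(1\pm s)^{2/3}\kappa$. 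Imposing the boundary conditions on each side and matching $u$ and $u'$ at the apex $\xi=0$ produces the vanishing of a Wronskian-type combination. That combination is precisely $f_s(\kappa)=0$, so its $k$-th root $\kappa_k(s)$ governs the comparison eigenvalue.

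\emph{Step 3 (the denominator, main obstacle).} Using the tangent-line bound $(1-z)^{-2}\ge 1+2z$ alone would give the bound without the denominator. The stated form $\kappa/(1+c\,t_0^{2/3}\kappa)$ suggests that the genuine argument instead compares eigenvalues through a Rayleigh-quotient or relative-form estimate, where the neglected part of the fibre operator is controlled by a multiple $t^{2/3}/(3\pi^2)$ of the remaining energy. I would first try to prove an operator inequality
\[
V-\frac{\pi^2}{t^2}\ \ge\ \frac{L}{1+\varepsilon(\lambda-\pi^2/t^2)},
\]
where $L$ is the linear potential and $\varepsilon=t^{2/3}/(3\pi^2)$ after scaling. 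Solving the resulting inequality for $\lambda-\pi^2/t^2$ would give the fractional form. Finally, monotonicity in $t\le t_0$ replaces $t$ by $t_0$ in the correction term.

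Verifying that the cubic-in-scale error terms fit exactly into the stated constant $1/(3\pi^2)$ is where I expect the real difficulty. If the direct route fails, I would fall back on a Temple--Lehmann type lower bound for the one-dimensional operator.
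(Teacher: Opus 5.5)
The paper does not prove this statement; it quotes it from \cite{Endo-Liu-2025-JDE-degenerate}, and uses only the case $k=1$ later (Lemma~\ref{lem:lambda-explicit-bound}).

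\textbf{The case $k=1$.} Here your Steps~1--2 are a complete argument, and they give more than is claimed. The ingredients are the fibrewise Poincar\'e bound and the tangent-line estimate $V-\pi^2/t^2\ge \frac{2\pi^2}{t^2}\frac{|x-s|}{1\pm s}$. You also need extension by zero from $H^1_0(-1,1)$ to the whole line: that is what produces the decaying Airy solutions and hence exactly $f_s(\kappa)=0$, whereas Dirichlet conditions at $x=\pm1$ would bring in $\mathrm{Bi}$. Together these yield
\[
t^{4/3}\Bigl(\lambda_1\bigl(\widetilde{\triangle}^{(s,t)}\bigr)-\frac{\pi^2}{t^2}\Bigr)\ge (2\pi^2)^{2/3}\kappa_1(s),
\]
which implies the stated bound because $\kappa/(1+c\kappa)\le\kappa$. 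So for $k=1$ Step~3 is unnecessary: you do not need to reproduce the denominator when you already have a stronger inequality.

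\textbf{The gap: Step~1 for $k\ge2$.} This is not a matter of ``a little care''. The reduced form $\iint(|\partial_x u|^2+V u^2)$ contains no $y$-derivative, so on the two-dimensional domain it has no discrete spectrum. To see this, take $u_j=\phi_\delta(x)\psi_j(y)$, where $\phi_\delta$ is the one-dimensional ground state on the horizontal slice at height $\delta$ and the $\psi_j\in C_c^\infty(0,\delta)$ have disjoint supports. These are $k$ orthogonal functions in $H^1_0(\widetilde{\triangle}^{(s,t)})$ whose reduced Rayleigh quotients all tend to $\mu_1$ as $\delta\to0$. Hence min--max applied to this form gives only $\lambda_k\ge\mu_1$ for every $k$.

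To separate the levels you must keep $-\partial_y^2$. Split $u$ fibrewise into the first transverse mode $\sqrt{2/h}\sin(\pi y/h)$ and its orthogonal complement, on which $-\partial_y^2\ge4\pi^2/h^2\ge4\pi^2/t^2$. Then control the coupling produced by $\partial_x$, which does not commute with this $x$-dependent projection.

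That coupling, not the curvature of $V$, is what the denominator encodes. For $t_0=t$ the claimed inequality reads
\[
\frac{1}{\lambda_k-\pi^2/t^2}\le\frac{1}{(2\pi^2)^{2/3}\kappa_k(s)\,t^{-4/3}}+\frac{1}{3\pi^2/t^2},
\]
which is a parallel sum of the Airy energy and the transverse gap $4\pi^2/t^2-\pi^2/t^2$. For $t<t_0$ the stated bound is weaker still.

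Your proposed inequality $V-\pi^2/t^2\ge L/(1+\varepsilon(\lambda-\pi^2/t^2))$ is simply weaker than the tangent-line bound and does nothing about this coupling. A Temple--Lehmann bound for the one-dimensional operator cannot help either, since for $k\ge2$ the one-dimensional reduction is not a valid lower bound in the first place.
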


To obtain a uniform lower bound independent of the horizontal parameter $s$, we establish the following property of $\kappa_1(s)$.

\begin{lemma}\label{lem:kappa-bound}
The smallest positive solution $\kappa_1(s)$ of \eqref{eq:airy-eigen} satisfies
\[
  \kappa_1(s)>1
  \qquad \text{for all } s\in[0,1).
\]
\end{lemma}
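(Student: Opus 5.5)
We will show that $f_s$ has no zero in $(0,1]$ for every $s\in[0,1)$. The approach is to rewrite $f_s(\kappa)=0$ in terms of the logarithmic derivative of $\mathcal{A}$, use monotonicity to handle $s=0$, and then certify positivity of a continuous function on a compact rectangle, by interval arithmetic if necessary.

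\emph{Step 1: positivity of the Airy factors.} Put $a:=(1+s)^{2/3}\kappa$ and $b:=(1-s)^{2/3}\kappa$. For $s\in[0,1)$ and $\kappa\in(0,1]$ we have $0<b\le\kappa\le a\le 2^{2/3}\kappa<1.59$. The first zero of $\mathcal{A}(u)=\mathrm{Ai}(-u)$ is at $u\approx2.338$, so $\mathcal{A}(a)>0$ and $\mathcal{A}(b)>0$. Dividing $f_s$ by $\mathcal{A}(a)\mathcal{A}(b)$ gives
\[
\frac{f_s(\kappa)}{\mathcal{A}(a)\mathcal{A}(b)}=(1+s)^{1/3}g(b)+(1-s)^{1/3}g(a)=:F(s,\kappa),
\qquad g:=\mathcal{A}'/\mathcal{A}.
\]
It therefore suffices to prove $F>0$ on $[0,1)\times(0,1]$.

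\emph{Step 2: the Riccati identity.} Since $\mathcal{A}''(u)=-u\,\mathcal{A}(u)$, we get
\[
g'(u)=-u-g(u)^2<0,
\]
so $g$ is strictly decreasing. Moreover $g(0)=-\mathrm{Ai}'(0)/\mathrm{Ai}(0)\approx0.729$, and $g$ vanishes exactly at $u_*\approx1.0188$, the first zero of $\mathrm{Ai}'(-u)$. This settles two cases.

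\begin{itemize}
\item For $s=0$: $F(0,\kappa)=2g(\kappa)>0$ for $\kappa\le1<u_*$, which recovers $\kappa_1(0)=u_*>1$.
\item For $\kappa\le u_*/2^{2/3}\approx0.64$: both $a$ and $b$ are below $u_*$, so both terms of $F$ are positive.
\end{itemize}

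\emph{Step 3 (main obstacle): the region $\kappa\in[0.64,1]$, $s\in(0,1)$.} Here $g(a)$ can be negative, and the crude bound $g(1)+g(2^{2/3})$ is not positive, because $g(1)$ is tiny. I would try two things.

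First, use the $s$-dependence more sharply. Since $b=(1-s)^{2/3}\kappa$ decreases with $s$, $g(b)$ grows toward $0.729$, while the bad weight $(1-s)^{1/3}$ shrinks. Near $s=0$, expand: $F(s,\kappa)=2g(\kappa)+O(s)$. The $O(s)$ coefficient is $\tfrac{2}{3}\kappa(g'(\kappa)\cdot(-1)+g'(\kappa))+\tfrac23\cdot(\dots)$, and it should be checked to be nonnegative using $g'=-u-g^2$.

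Second, as the safe fallback, treat $F$ on the compact rectangle $[0,1]\times[0.64,1]$. Here $F$ is continuous, including at $s=1$, where $F(1,\kappa)=2^{1/3}g(0)>0$. Cover the rectangle by small boxes and evaluate $F$ with interval arithmetic, using validated enclosures of $\mathrm{Ai}$ and $\mathrm{Ai}'$, consistent with the rest of the computer-assisted proof. Since $F$ is Lipschitz with explicit bounds coming from $g'=-u-g^2$, a finite grid suffices.

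\emph{Conclusion.} In all cases $f_s(\kappa)\neq0$ for $\kappa\in(0,1]$, hence $\kappa_1(s)>1$ for all $s\in[0,1)$.
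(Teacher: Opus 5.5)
Your route is genuinely different from the paper's. It is viable, but its decisive step, Step 3, is a certified computation you have not yet performed.

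\textbf{How the paper argues.} The paper never analyzes $f_s$ away from $s=0$.
\begin{itemize}
\item Steiner symmetrization about the vertical axis gives $\lambda_1(\widetilde{\triangle}^{(s,t)})\ge\lambda_1(\widetilde{\triangle}^{(0,t)})$.
\item The uniform expansion of Ourmi\`eres-Bonafos, $\lambda_1(\widetilde{\triangle}^{(s,t)})=\pi^2t^{-2}+(2\pi^2)^{2/3}\kappa_1(s)t^{-4/3}+O(t^{-1})$ for $|s|\le s_0<1$, then yields $\kappa_1(s)\ge\kappa_1(0)$ as $t\to0^+$.
\item Finally, $\kappa_1(0)\approx1.0188$ is the first zero of $\mathcal{A}'$.
\end{itemize}
That argument needs no numerics and gives the stronger structural fact that $s=0$ minimizes $\kappa_1$. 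However, it rests on a deep external asymptotic result and on identifying the root of $f_s$ with the expansion coefficient.

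\textbf{What your route buys.} Your reduction to $F(s,\kappa)=(1+s)^{1/3}g(b)+(1-s)^{1/3}g(a)$, via the Riccati equation $g'=-u-g^2$, works directly with the Airy equation and uses no spectral theory. In exchange, it only gives $\kappa_1(s)>1$, which is all that is needed, and it requires validated Airy enclosures.

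\textbf{Corrections and a simplification for Step 3.}
\begin{itemize}
\item Replacing $s$ by $-s$ swaps the two terms of $F$, so $F$ is even in $s$. Hence the $O(s)$ coefficient in your first idea vanishes identically, and the relevant term is quadratic. A local expansion at $s=0$ also cannot cover $(0,1)$ without remainder control, so only your fallback is actually a proof.
\item $F$ is not Lipschitz in $s$ up to $s=1$, because $(1-s)^{1/3}$ and $(1-s)^{2/3}$ have unbounded derivatives there. This is harmless for interval arithmetic, which needs only range enclosures, not a Lipschitz constant. But your justification that a finite grid suffices because $F$ is Lipschitz should be replaced.
\item Most usefully, since $g'<0$ on $[0,2^{2/3}]$,
\[
\partial_\kappa F(s,\kappa)=(1+s)^{1/3}(1-s)^{2/3}g'(b)+(1-s)^{1/3}(1+s)^{2/3}g'(a)<0 .
\]
So $F(s,\kappa)\ge F(s,1)$ for $\kappa\in(0,1]$. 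This makes your second bullet in Step 2 and the two-dimensional covering unnecessary. Everything reduces to the one-variable check $F(s,1)>0$ on $[0,1)$.
\end{itemize}

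\textbf{The remaining check has a comfortable margin.} Numerically the minimum is $F(0,1)=2g(1)\approx0.038$. Near $s=0$ one has $F(s,1)\approx0.038+0.235\,s^2$. Further, $F(\tfrac12,1)\approx0.11$, and $F(s,1)\to2^{1/3}g(0)\approx0.92$ as $s\to1$.
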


\begin{proof}
Let
\[
  \widetilde{\lambda}_1(s,t):=\lambda_1\bigl(\widetilde{\triangle}^{(s,t)}\bigr)
\]
denote the first Dirichlet eigenvalue of $-\Delta$ on the triangle
$\widetilde{\triangle}^{(s,t)}$.
By Steiner symmetrization with respect to the axis $x=0$, this triangle is transformed
into the isosceles triangle with vertices $(-1,0)$, $(1,0)$, and $(0,t)$, preserving
the base and the height. Since Steiner symmetrization does not increase the first
Dirichlet eigenvalue (see, e.g., \cite{Polya-Szego-1951}), we have
\begin{equation}\label{eq:Steiner-mu}
  \widetilde{\lambda}_1(s,t)\ge \widetilde{\lambda}_1(0,t)
  \qquad\text{for all } s\in(-1,1),\ t>0.
\end{equation}
Since $\kappa_1(s)$ is even in $s$, it is enough to consider $s\in[0,1)$.

Fix $s\in[0,1)$, and choose $s_0$ so that $s<s_0<1$. By
\cite[Theorem~1.2]{ourmieres2015dirichlet}, for $n=1$ there exist coefficients
$\beta_{j,1}(s)$ such that
\[
  \widetilde{\lambda}_1(s,t)\sim_{t\to0} t^{-2}\sum_{j\ge0}\beta_{j,1}(s)\,t^{j/3}
\]
uniformly for $s\in[-s_0,s_0]$. In particular,
\[
  \beta_{0,1}(s)=\pi^2,\qquad
  \beta_{1,1}(s)=0,\qquad
  \beta_{2,1}(s)=(2\pi^2)^{2/3}\kappa_1(s).
\]
Hence there exist constants $C>0$ and $t_0'>0$ such that, for all
$s\in[-s_0,s_0]$ and $0<t<t_0'$,
\begin{equation}\label{eq:mu-expansion-unif}
  \widetilde{\lambda}_1(s,t)
  = \frac{\pi^2}{t^2}
    + (2\pi^2)^{2/3}\kappa_1(s)\,t^{-4/3}
    + R(s,t),
  \qquad |R(s,t)|\le Ct^{-1}.
\end{equation}

Applying \eqref{eq:mu-expansion-unif} with $s$ and with $0$, and subtracting, we obtain
\[
  \widetilde{\lambda}_1(s,t)-\widetilde{\lambda}_1(0,t)
  = (2\pi^2)^{2/3}\bigl(\kappa_1(s)-\kappa_1(0)\bigr)t^{-4/3}
    + R(s,t)-R(0,t).
\]
Multiplying by $t^{4/3}$ yields
\[
  t^{4/3}\bigl(\widetilde{\lambda}_1(s,t)-\widetilde{\lambda}_1(0,t)\bigr)
  = (2\pi^2)^{2/3}\bigl(\kappa_1(s)-\kappa_1(0)\bigr)
    + t^{4/3}\bigl(R(s,t)-R(0,t)\bigr).
\]
By \eqref{eq:mu-expansion-unif}, the last term tends to $0$ as $t\to0+$. On the other hand,
\eqref{eq:Steiner-mu} shows that the left-hand side is nonnegative for all $t>0$. Therefore,
letting $t\to0+$, we obtain
\[
  (2\pi^2)^{2/3}\bigl(\kappa_1(s)-\kappa_1(0)\bigr)\ge0,
\]
and hence $\kappa_1(s)\ge\kappa_1(0)$ for all $s\in[0,1)$.

Finally, by \cite[Remark~2.2]{ourmieres2015dirichlet},
\[
  \kappa_1(0)=z_A^0(1)>1,
\]
where $z_A^0(1)$ denotes the first positive zero of $A'(u)$, with
$A(u)=\mathrm{Ai}(-u)$. Combined with $\kappa_1(s)\ge\kappa_1(0)$, this yields
$\kappa_1(s)>1$ for all $s\in[0,1)$.
\end{proof}

Using Lemma \ref{lem:kappa-bound}, we derive an explicit lower bound for $\lambda_1(\triangle^p)$ in terms of the height $y$, where $p=(x,y)$.

\begin{lemma}\label{lem:lambda-explicit-bound}
Let $\triangle^p$ be a triangle with $p=(x,y)$ and $y\in(0,t_0/2]$. Then
\begin{equation}\label{eq:lambda-explicit}
  \lambda_1(\triangle^p)\ge \frac{\pi^2}{y^2}
  + \nu_{t_0}\frac{2^{4/3}\pi^{4/3}}{y^{4/3}},
\end{equation}
where
\[
  \nu_{t_0}:=\left(1+\frac{(2\pi^2t_0)^{2/3}}{3\pi^2}\right)^{-1}.
\]
\end{lemma}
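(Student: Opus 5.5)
The plan is to reduce to Theorem~\ref{Thm:Airy-bound} by an explicit similarity, and then use Lemma~\ref{lem:kappa-bound} to remove the dependence on $s$. Throughout, we take $p=(x,y)\in\Omega$, which is the setting in which the lemma is used. Then $y>0$ and $x^2+y^2\le 1$ force $x\in[\tfrac12,1)$.

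\emph{Step 1 (change of normalization).} Consider the affine map $z\mapsto 2z-(1,0)$. It is a dilation by $2$ followed by a translation. It sends $(0,0)\mapsto(-1,0)$, $(1,0)\mapsto(1,0)$ and $p=(x,y)\mapsto(2x-1,2y)$. Hence $\triangle^p$ is similar with ratio $\tfrac12$ to $\widetilde{\triangle}^{(s,t)}$, where
\[
s:=2x-1\in[0,1),\qquad t:=2y.
\]
Dirichlet eigenvalues scale like the inverse square of length, so
\[
\lambda_1(\triangle^p)=4\,\lambda_1\bigl(\widetilde{\triangle}^{(s,t)}\bigr).
\]
The hypothesis $y\in(0,t_0/2]$ is exactly $t\in(0,t_0]$, so Theorem~\ref{Thm:Airy-bound} applies with $k=1$.

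\emph{Step 2 (uniformity in $s$).} Set $c:=(2\pi^2)^{2/3}$ and $d:=\frac{t_0^{2/3}}{3\pi^2}(2\pi^2)^{2/3}=\frac{(2\pi^2t_0)^{2/3}}{3\pi^2}$. The right-hand side of \eqref{eq:Airy-general-bound} is $g(\kappa_1(s))$, where $g(\kappa)=c\kappa/(1+d\kappa)$. Since $g'(\kappa)=c/(1+d\kappa)^2>0$, the function $g$ is increasing on $(0,\infty)$. By Lemma~\ref{lem:kappa-bound}, $\kappa_1(s)>1$ for $s\in[0,1)$, so
\[
g(\kappa_1(s))\ge g(1)=\frac{c}{1+d}=(2\pi^2)^{2/3}\nu_{t_0}.
\]
Combining this with \eqref{eq:Airy-general-bound} gives
\[
\lambda_1\bigl(\widetilde{\triangle}^{(s,t)}\bigr)\ge \frac{\pi^2}{t^2}+(2\pi^2)^{2/3}\nu_{t_0}\,t^{-4/3}.
\]

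\emph{Step 3 (back-substitution).} Multiply by $4$ and put $t=2y$. This gives
\[
\lambda_1(\triangle^p)\ge \frac{4\pi^2}{4y^2}+4\cdot 2^{-4/3}\cdot 2^{2/3}\pi^{4/3}\,\nu_{t_0}\,y^{-4/3}.
\]
The constant is $4\cdot 2^{-4/3}\cdot 2^{2/3}=2^{2-\frac43+\frac23}=2^{4/3}$, so the right-hand side equals $\frac{\pi^2}{y^2}+\nu_{t_0}\frac{2^{4/3}\pi^{4/3}}{y^{4/3}}$, which is \eqref{eq:lambda-explicit}.

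There is no serious obstacle here. The only points to watch are the bookkeeping of the scaling factors and that $s=2x-1$ stays in $[0,1)$, where Lemma~\ref{lem:kappa-bound} is available. The second point holds because $x\in[\tfrac12,1)$ on $\Omega$. For completeness one can also treat $x\in(0,\tfrac12)$: evenness of $\kappa_1$ in $s$ extends the bound to $|s|<1$.
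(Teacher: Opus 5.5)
Your proposal is correct and follows essentially the same route as the paper's proof. Like the paper, you rescale $\triangle^p=\tfrac12\widetilde{\triangle}^{(2x-1,2y)}$, apply Theorem~\ref{Thm:Airy-bound} with $s=2x-1$ and $t=2y$, and use that $u\mapsto u/(1+C_{t_0}u)$ is increasing together with $\kappa_1>1$ from Lemma~\ref{lem:kappa-bound}. Your restriction to $p\in\Omega$, which gives $s\in[0,1)$, states explicitly a range condition that the paper's proof uses only implicitly.
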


\begin{proof}
Since
\[
  \triangle^p=\frac12\,\widetilde{\triangle}^{(2x-1,\,2y)},
\]
we have
\[
  \lambda_1(\triangle^p)
  =4\,\lambda_1\bigl(\widetilde{\triangle}^{(2x-1,\,2y)}\bigr).
\]
Applying \eqref{eq:Airy-general-bound} with $s=2x-1$ and $t=2y$, we obtain
\begin{equation}\label{eq:lambda-Preliminaries}
  \lambda_1(\triangle^p)
  \ge \frac{\pi^2}{y^2}
  + 2^{4/3}\pi^{4/3}
    \frac{\kappa_1(2x-1)}{1+C_{t_0}\kappa_1(2x-1)}
    \frac{1}{y^{4/3}},
\end{equation}
where
\[
  C_{t_0}:=\frac{t_0^{2/3}}{3\pi^2}(2\pi^2)^{2/3}
  =\frac{(2\pi^2t_0)^{2/3}}{3\pi^2}.
\]
Since the function
\[
  u\mapsto \frac{u}{1+C_{t_0}u}
\]
is strictly increasing on $(0,\infty)$, Lemma \ref{lem:kappa-bound} yields
\begin{equation}\label{eq:kappa-ratio-lower}
  \frac{\kappa_1(2x-1)}{1+C_{t_0}\kappa_1(2x-1)}
  \ge \frac{1}{1+C_{t_0}}
  = \nu_{t_0}.
\end{equation}
Substituting \eqref{eq:kappa-ratio-lower} into \eqref{eq:lambda-Preliminaries}, we obtain
\eqref{eq:lambda-explicit}.
\end{proof}

\begin{lemma}\label{lem:J1-bound-down}
For every $p \in \Omega_{\mathrm{down}}$, we have $J_1(\triangle^p)>0$.
\end{lemma}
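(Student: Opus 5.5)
We plan to combine Lemma~\ref{lem:lambda-explicit-bound} with an elementary upper bound for the perimeter term in $J_1$. For $p=(x,y)\in\Omega_{\mathrm{down}}$ we have $|\triangle^p|=y/2$, so
\[
J_1(\triangle^p)=\frac{y}{2}\lambda_1(\triangle^p)-\frac{\pi^2}{8}\frac{|\partial\triangle^p|^2}{y}-\frac{7\sqrt3\pi^2}{12}.
\]
We choose $t_0:=2\varepsilon_{\mathrm{down}}=0.08$, so that $y<\varepsilon_{\mathrm{down}}=t_0/2$ and Lemma~\ref{lem:lambda-explicit-bound} applies. This gives
\[
\frac{y}{2}\lambda_1(\triangle^p)\ge \frac{\pi^2}{2y}+\frac{\nu_{t_0}2^{4/3}\pi^{4/3}}{2}\,y^{-1/3},
\]
and $\nu_{t_0}$ is an explicit number.

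Next we bound the perimeter. Write $|\partial\triangle^p|=1+r_1+r_2$ with $r_1=\sqrt{x^2+y^2}\le 1$ and $r_2=\sqrt{(x-1)^2+y^2}$. Using $\sqrt{c^2+y^2}\le c+y^2/(2c)$ for $c>0$, or simply $\sqrt{c^2+y^2}\le c+y$ when $c$ is small, we get $r_1+r_2\le 1+\mathcal O(y)$. More precisely, the triangle inequality in the direction of the base gives $r_1+r_2\ge 1$. For the upper bound we use $r_1+r_2\le \sqrt{x^2+y^2}+(1-x)+y\le 1+y+y^2/(2x)\le 1+y+y^2$, since $x\ge 1/2$. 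Hence $|\partial\triangle^p|\le 2+y+y^2$, and
\[
\frac{\pi^2}{8}\frac{|\partial\triangle^p|^2}{y}\le \frac{\pi^2}{8}\Big(\frac4y+4+\mathcal O(y)\Big)=\frac{\pi^2}{2y}+\frac{\pi^2}{2}+\mathcal O(y).
\]
The singular terms $\pi^2/(2y)$ cancel exactly; this is the cancellation anticipated in the introduction. What remains is
\[
J_1(\triangle^p)\ge \frac{\nu_{t_0}2^{1/3}\pi^{4/3}}{y^{1/3}}-\frac{\pi^2}{2}-\frac{7\sqrt3\pi^2}{12}-\frac{\pi^2}{8}\big(\text{explicit }\mathcal O(y)\text{ terms}\big).
\]

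The main obstacle is checking that the constants actually close. The positive term $\nu_{t_0}2^{1/3}\pi^{4/3}y^{-1/3}$ decreases in $y$, so its worst case is at $y=\varepsilon_{\mathrm{down}}=0.04$. There $y^{-1/3}\approx 2.92$ and $2^{1/3}\pi^{4/3}\approx 5.77$. We also need $\nu_{0.08}=(1+(2\pi^2\cdot0.08)^{2/3}/(3\pi^2))^{-1}\approx(1+0.045)^{-1}\approx0.957$. This makes the positive term about $16.1$.

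The negative constants are $\pi^2/2+7\sqrt3\pi^2/12\approx 4.93+9.97\approx14.9$, plus the $\mathcal O(y)$ terms, which are roughly $\frac{\pi^2}{8}(4y\cdot\ldots)\approx0.3$ at $y=0.04$. The margin is therefore positive but thin, about $0.9$. So we will keep the perimeter expansion sharp: expand $|\partial\triangle^p|^2\le(2+y+y^2)^2=4+4y+\ldots$ and bound everything with interval arithmetic at $y\le0.04$.

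The function
\[
y\mapsto \nu_{t_0}2^{1/3}\pi^{4/3}y^{-1/3}-\frac{\pi^2}{8}\frac{(2+y+y^2)^2-4}{y}-\frac{7\sqrt3\pi^2}{12}
\]
is decreasing on $(0,0.04]$. We then verify its value at $y=0.04$ rigorously, which gives $J_1(\triangle^p)>0$ throughout $\Omega_{\mathrm{down}}$. If the margin turns out insufficient, the fallback is to sharpen the perimeter bound to $r_1+r_2\le 1+y^2/(2x)+y^2/(2(1-x))$ when $1-x$ is not small, and to split into $x$-ranges.
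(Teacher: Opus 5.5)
Your proposal is correct and follows essentially the same route as the paper: both use Lemma~\ref{lem:lambda-explicit-bound} with $t_0=0.08$, cancel the $\pi^2/(2y)$ terms against an upper bound on the perimeter, and check a decreasing remainder function at $y=0.04$. The only difference is your slightly cruder perimeter bound $|\partial\triangle^p|\le 2+y+y^2$ (the paper uses monotonicity in $x$ to get $2+y+y^2/2$). This still leaves a margin of roughly $0.9$ at $y=0.04$, even with the paper's conservative $\nu_{t_0}>0.949$, so the fallback you mention is not needed.
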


\begin{proof}
Let $p=(x,y)\in \Omega_{\mathrm{down}}$. We first estimate the perimeter:
\[
  |\partial\triangle^p|
  =1+\sqrt{x^2+y^2}+\sqrt{(1-x)^2+y^2}.
\]
For fixed $y>0$, the function
\[
  x\mapsto \sqrt{x^2+y^2}+\sqrt{(1-x)^2+y^2}
\]
is increasing on $[1/2,1]$. Hence
\[
  |\partial\triangle^p|
  \le 1+y+\sqrt{1+y^2}
  < 2+y+\frac{y^2}{2},
\]
where we used $\sqrt{1+y^2}<1+\frac{y^2}{2}$. Since $|\triangle^p|=y/2$, it follows that
\[
  \frac{|\partial\triangle^p|^2}{|\triangle^p|}
  < \frac{2}{y}\left(2+y+\frac{y^2}{2}\right)^2
  = \frac{8}{y}+8+6y+2y^2+\frac{y^3}{2}.
\]
Therefore,
\begin{equation}\label{eq:perimeter-term-down}
  -\frac{\pi^2}{16}\frac{|\partial\triangle^p|^2}{|\triangle^p|}
  > -\frac{\pi^2}{2y}-\frac{\pi^2}{2}-\frac{3\pi^2}{8}y
    -\frac{\pi^2}{8}y^2-\frac{\pi^2}{32}y^3.
\end{equation}

On the other hand, by Lemma \ref{lem:lambda-explicit-bound} with $t_0=0.08$,
\begin{equation}\label{eq:eigenvalue-term-down}
  \lambda_1(\triangle^p)|\triangle^p|
  \ge \left(\frac{\pi^2}{y^2}+\nu_{t_0}\frac{2^{4/3}\pi^{4/3}}{y^{4/3}}\right)\frac{y}{2}
  = \frac{\pi^2}{2y}+\nu_{t_0}\pi^{4/3}2^{1/3}y^{-1/3}.
\end{equation}
Combining \eqref{eq:perimeter-term-down} and \eqref{eq:eigenvalue-term-down}, we obtain $J_1(\triangle^p)>\Phi(y)$,
where
\[
  \Phi(y):=
  \nu_{t_0}\pi^{4/3}2^{1/3}y^{-1/3}
  -\left(\frac{\pi^2}{2}+\frac{7\sqrt3\pi^2}{12}\right)
  -\frac{3\pi^2}{8}y
  -\frac{\pi^2}{8}y^2
  -\frac{\pi^2}{32}y^3.
\]
Since $\Phi$ is strictly decreasing on $(0,\infty)$ and $y\le 0.04$, it is enough to show that
$\Phi(0.04)>0$. A direct computation using $\nu_{t_0}>0.949$ gives
\[
  \Phi(0.04)
  > 0.949\,\pi^{4/3}2^{1/3}(0.04)^{-1/3}
    -\left(\frac{\pi^2}{2}+\frac{7\sqrt3\pi^2}{12}\right)
    -\frac{3\pi^2}{8}(0.04)
    -\frac{\pi^2}{8}(0.04)^2
    -\frac{\pi^2}{32}(0.04)^3
  > 0.
\]
Therefore $J_1(\triangle^p)>0$.
\end{proof}

\medskip

\textbf{Positivity of $J_2$}

The following lemma shows the positivity of $J_2$ over $\Omega_{\mathrm{down}}$.

\begin{lemma}\label{lem:J2-bound-down}
We have that $J_{2}\left(\triangle^p\right)>0$ for all  $p=(x,y)~(\in\Omega_{\mathrm{down}})$ with $y\in\left(0,0.11\right)$.
\end{lemma}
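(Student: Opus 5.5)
The plan is to combine the Freitas--Siudeja eigenvalue lower bound quoted in the introduction with elementary geometric bounds on the area and perimeter. Everything then reduces to positivity of an explicit one-variable function of $y$ on $(0,0.11)$.

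\emph{Step 1: lower bound for the eigenvalue term.} For $p=(x,y)\in\Omega_{\mathrm{down}}$, \cite[Corollary~4.1]{Freitas-Siudeja-2010} in our normalization gives $\lambda_1(\triangle^p)\ge \pi^2(1+y)^2/y^2$. Since $|\triangle^p|=y/2$, this yields
\[
\lambda_1(\triangle^p)\,|\triangle^p|\ge \frac{\pi^2(1+y)^2}{2y}.
\]

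\emph{Step 2: upper bound for the perimeter.} As in the proof of Lemma~\ref{lem:J1-bound-down}, monotonicity in $x\in[1/2,1]$ gives
\[
|\partial\triangle^p|\le 1+y+\sqrt{1+y^2}<2+y+\tfrac{y^2}{2}.
\]
Also $\sqrt{4\pi|\triangle^p|}=\sqrt{2\pi y}$. Write $C_*=4\pi^2/(3+\sqrt{\pi\sqrt3})^2$. Multiplying by $2y>0$, it suffices to prove
\[
\pi^2(1+y)^2> C_*\Bigl(2+y+\tfrac{y^2}{2}+\sqrt{2\pi y}\Bigr)^2 .
\]
Both sides are positive, so we may take square roots. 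With $\sqrt{C_*}=2\pi/(3+\sqrt{\pi\sqrt3})\approx1.1784$, the goal becomes
\[
g(y):=\pi(1+y)-\sqrt{C_*}\Bigl(2+y+\tfrac{y^2}{2}+\sqrt{2\pi y}\Bigr)>0\quad\text{on }(0,0.11).
\]

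\emph{Step 3: positivity of $g$ (the delicate step, since the margin is thin).} Substitute $s=\sqrt y\in(0,s_1]$ with $s_1=\sqrt{0.11}\approx0.3317$. Then $g$ becomes a quartic in $s$. For $y\le 0.11$ we have $s^4/2\le 0.055\,s^2$, so
\[
g\ge q(s):=(\pi-2\sqrt{C_*})+\bigl(\pi-1.055\sqrt{C_*}\bigr)s^2-\sqrt{C_*}\sqrt{2\pi}\,s .
\]
The function $q$ is a convex quadratic. Its vertex lies at $s^*=\sqrt{C_*}\sqrt{2\pi}/\bigl(2(\pi-1.055\sqrt{C_*})\bigr)\approx0.78>s_1$, so $q$ is decreasing on $[0,s_1]$. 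It therefore suffices to check $q(s_1)>0$.

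Numerically the three terms of $q(s_1)$ are about $0.785+0.209-0.980\approx0.014>0$. This margin is small, so I would certify $q(s_1)>0$ with interval arithmetic, or with hand-verified rational enclosures of $\pi$, $\sqrt{\pi\sqrt3}$ and $\sqrt{2\pi}$. This final numerical check is the main obstacle: the constant $0.11$ appears to be chosen close to the threshold where this chain of estimates stops working. If the bound $\sqrt{1+y^2}<1+y^2/2$ or the replacement $s^4/2\le0.055\,s^2$ turned out too lossy, I would instead keep the exact quartic and verify it on a few subintervals with interval arithmetic.

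Since $\Omega_{\mathrm{down}}$ only requires $y<\varepsilon_{\mathrm{down}}=0.04<0.11$, this gives $J_2(\triangle^p)>0$ on all of $\Omega_{\mathrm{down}}$.
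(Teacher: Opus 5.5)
Your proof is correct and follows the paper's route: the Freitas--Siudeja bound $\lambda_1(\triangle^p)\ge \pi^2(1+y)^2/y^2$, then an upper bound on the perimeter, then positivity of a one-variable function verified at the endpoint $y=0.11$. The differences are minor. The paper uses the sharper perimeter bound $2+\sqrt{2-2\sqrt{1-y^2}}$, from $x\le\sqrt{1-y^2}$, and proves that the squared expression $F$ is monotone via a derivative estimate. Your weaker bound $2+y+y^2/2$ and your square-root argument with a quadratic in $\sqrt y$ still leave a comfortable margin of about $0.014$, so no delicate interval arithmetic is needed.
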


\begin{proof}
By Corollary~4.1 of Freitas and Siudeja \cite{Freitas-Siudeja-2010}, we have
\[
\lambda_{1}\left(\triangle^p\right)\geq\frac{\pi^{2}}{4\left|\triangle^p\right|^{2}}\left(d\left(\triangle^p\right)+\frac{2\left|\triangle^p\right|}{d\left(\triangle^p\right)}\right)^{2}=\frac{\pi^{2}(1+y)^{2}}{y^{2}}
\]
so that
\[
\lambda_{1}\left(\triangle^p\right)\left|\triangle^p\right|\geq\frac{\pi^{2}(1+y)^{2}}{2y},
\]
hence
\begin{align*}
J_{2}\left(\triangle^p\right) & =\lambda_{1}\left(\triangle^p\right)\left|\triangle^p\right|-\frac{\pi^{2}}{\left(3+\sqrt{\pi\sqrt{3}}\right)^{2}}\frac{\left(\left|\partial\triangle^p\right|+\sqrt{4\pi\left|\triangle^p\right|}\right)^{2}}{\left|\triangle^p\right|}\\
 & \ge\frac{\pi^{2}(1+y)^{2}}{2y}-\frac{\pi^{2}}{\left(3+\sqrt{\pi\sqrt{3}}\right)^{2}}\frac{\left(\left|\partial\triangle^p\right|+\sqrt{4\pi\left|\triangle^p\right|}\right)^{2}}{\left|\triangle^p\right|}.
\end{align*}

Since $\left|\partial\triangle^p\right|$ is maximized for the
the isosceles triangle $x=\sqrt{1-y^{2}}$ plugging this in means
\[
\left|\partial\triangle^p\right|\leq\left|\partial\triangle^{\sqrt{1-y^{2}},y}\right|=2+\sqrt{2-2\sqrt{1-y^{2}}}.
\]
Now
\begin{align*}
J_{2}\left(\triangle^p\right) & =\lambda_{1}\left(\triangle^p\right)\left|\triangle^p\right|-\frac{\pi^{2}}{\left(3+\sqrt{\pi\sqrt{3}}\right)^{2}}\frac{\left(\left|\partial\triangle^p\right|+\sqrt{4\pi\left|\triangle^p\right|}\right)^{2}}{\left|\triangle^p\right|}\\
 & \ge\frac{\pi^{2}(1+y)^{2}}{2y}-\frac{2\pi^{2}}{\left(3+\sqrt{\pi\sqrt{3}}\right)^{2}}\frac{\left(2+\sqrt{2-2\sqrt{1-y^{2}}}+\sqrt{2\pi y}\right)^{2}}{y}
\end{align*}
Consider the function
\[
\phi\left(y\right)=\frac{\pi^{2}(1+y)^{2}}{2y}-\frac{2\pi^{2}}{\left(3+\sqrt{\pi\sqrt{3}}\right)^{2}}\frac{\left(2+\sqrt{2-2\sqrt{1-y^{2}}}+\sqrt{2\pi y}\right)^{2}}{y}
\]
We aim to find $y_{0}$ so that
\[
\phi\left(y\right)\geq0,\forall y\in(0,y_{0}).
\]
First, note that we can just consider
\begin{align*}
\phi\left(y\right) & =\frac{\pi^{2}(1+y)^{2}}{2y}-\frac{2\pi^{2}}{\left(3+\sqrt{\pi\sqrt{3}}\right)^{2}}\frac{\left(2+\sqrt{2-2\sqrt{1-y^{2}}}+\sqrt{2\pi y}\right)^{2}}{y}\\
 & =\frac{\pi^{2}}{y}\left(\frac{1}{2}\left(1+y\right)^{2}-\frac{2}{\left(3+\sqrt{\pi\sqrt{3}}\right)^{2}}\left(2+\sqrt{2-2\sqrt{1-y^{2}}}+\sqrt{2\pi y}\right)^{2}\right)\\
 & =:\frac{\pi^{2}}{y}F\left(y\right)
\end{align*}
where
\begin{align*}
F\left(y\right) & =\frac{1}{2}\left(1+y\right)^{2}-\frac{2}{\left(3+\sqrt{\pi\sqrt{3}}\right)^{2}}\left(2+\sqrt{2-2\sqrt{1-y^{2}}}+\sqrt{2\pi y}\right)^{2}\\
 & =\frac{1}{2}\left(1+y\right)^{2}-\frac{2}{\left(3+\sqrt{\pi\sqrt{3}}\right)^{2}}G\left(y\right)^{2}.
\end{align*}

First we show $F$ strictly decreases on $(0,0.2]$. To see this, we compute the derivative of $F$ to obtain
\[
F^{\prime}\left(y\right)=1+y-\frac{4}{\left(3+\sqrt{\pi\sqrt{3}}\right)^{2}}G\left(y\right)G^{\prime}\left(y\right).
\]
Note that on $\left(0,1\right),$ the mapping $y\mapsto2-2\sqrt{1-y^{2}}$
is increasing, implying that $y\mapsto\sqrt{2-2\sqrt{1-y^{2}}}$ is
increasing with non-negative derivative, which implies
\begin{align*}
G^{\prime}\left(y\right) & =\frac{d}{dy}\left(\sqrt{2-2\sqrt{1-y^{2}}}\right)+\frac{d}{dy}\sqrt{2\pi y}\geq0+\sqrt{\frac{\pi}{2y}}.
\end{align*}
We also clearly have that $G\left(y\right)\geq2+\sqrt{2\pi y}$. Using
both of these facts imply that 
\begin{align*}
F^{\prime}\left(y\right) & =1+y-\frac{4}{\left(3+\sqrt{\pi\sqrt{3}}\right)^{2}}G\left(y\right)G^{\prime}\left(y\right)\\
 & \leq1+y-\frac{4}{\left(3+\sqrt{\pi\sqrt{3}}\right)^{2}}\left(2+\sqrt{2\pi y}\right)\sqrt{\frac{\pi}{2y}}\\
 & =1+y-\frac{4}{\left(3+\sqrt{\pi\sqrt{3}}\right)^{2}}\left(2\sqrt{\frac{\pi}{2y}}+\pi\right):=-h\left(y\right).
\end{align*}
We can compute
\[
h^{\prime}\left(y\right)=-\frac{4}{\left(3+\sqrt{\pi\sqrt{3}}\right)^{2}}\sqrt{\frac{\pi}{2}}y^{-\frac{3}{2}}-1<0.
\]
This shows $h$ is strictly decreasing on $(0,0.2]$. Hence $h\left(y\right)\geq h\left(0.2\right)\geq0.03$
for all $y\in(0,0.2]$. This implies $F^{\prime}\left(y\right)\leq-h\left(y\right)<0$
for all $y\in(0,0.2]$. This proves $F$ is strictly decreasing on
$(0,0.2]$.

Direct computation shows that 
\[
F\left(0.11\right)\geq0.0075>0
\]
and using the fact that $F$ is decreasing and continuous shows that
$F\left(y\right)>0$ for all $y\in(0,0.11]$.
\end{proof}

\section*{Acknowledgement}

The first and the second authors are supported by Japan Society for the Promotion of Science. The first author is supported by JSPS KAKENHI Grant Number JP24KJ1170. The second author is supported by JSPS KAKENHI Grant Numbers JP24K00538 and 24K21314. The third author is supported in part by  NSF Grant DMS-2316968. 

The authors would like to thank Ilias Ftouhi for helpful discussions. 

\bibliographystyle{plain}
\bibliography{EndoRef}

\end{document}